\newtheorem{thm}{Theorem} [section]
\theoremstyle{definition}
\theoremstyle{plain}
\newtheorem{prop}[thm]{Proposition}
\newtheorem{lem}[thm]{Lemma}
\newtheorem{cor}[thm]{Corollary}
\numberwithin{equation}{section}
\newcommand{\Bl}{\mathcal B}
\newcommand{\g}{\mathfrak{g}}
\newcommand{\bo}{\mathfrak{b}}
\newcommand{\gl}{\mathfrak{gl}}
\newcommand{\h}{\mathfrak{h}}
\newcommand{\n}{\mathfrak n}
\newcommand{\one}{{\ov 1}}
\newcommand{\ov}{\overline}
\newcommand{\OO}{\mathcal O}
\newcommand{\W}{\mathcal W}
\newcommand{\Z}{\mathbb Z}
\newcommand{\oo}{{\ov 0}}
\newcommand{\extp}{\@ifnextchar^\@extp{\@extp^{\,}}}
\def\@extp^#1{\mathop{\bigwedge\nolimits^{\!#1}}}
\newcommand{\M}[1]{M_{#1}}
\title[Characters for Projective Modules in the BGG Category $\OO$]{Characters for Projective Modules in the BGG Category $\OO$ for general linear lie superalgebras }
\author[A.S. Kannan]{Arun S. Kannan}
\address{Department of Mathematics, University of Virginia, Charlottesville, VA 22904} \email{ask9ge@virginia.edu}
\begin{document}

\begin{abstract}
We determine the Verma multiplicities and the characters of projective modules for atypical blocks in the BGG Category $\OO$ for the general linear Lie superalgebras $\gl(2|2)$ and $\gl(3|1)$. We then explicitly determine the composition factor multiplcities of Verma modules in the atypicality $2$ block of $\gl(2|2)$.
\end{abstract}

\maketitle

\setcounter{tocdepth}{1}
\tableofcontents

\section{Introduction}
\subsection{}
The BGG Category $\OO$ is a category of modules of a semisimple Lie algebra that has been well studied for its rich and deep theory (cf. \cite{Hum08}). This category can be analogously defined for the general linear Lie superalgebra $\gl(m|n)$ (cf. \cite{CW12, Mus12}), and many results from the semisimple case extend to the super case. In this paper, we examine atypical blocks in the BGG Category $\OO$ for low-dimensional $\gl(m|n)$.
\subsection{}
Atypicality of weights is a phenomenon present in Lie superalgebras that has no analogue for semisimple Lie algebras. It allows for an integral block in $\OO$ whose degree of atypicality is greater than $0$ to have infinitely many simple modules. The principal block in $\OO$ for $\gl(m|n)$, which contains the trivial module, always has nonzero degree of atypicality when $m, n \geq 1$.
\par
Atypicality arises due to the presence of isotropic odd roots (i.e. roots of length zero) in the root system, which expand the notion of linkage beyond the orbit of the Weyl group. For $\gl(m|n)$, the degree of atypicality is an integer in the range $0$ to $\mathrm{min}(m, n)$, inclusive. The degree $0$ block can be reduced to the semisimple Lie algebra case via an equivalence of categories (cf. \cite{Gor02}). Therefore, the new cases arise primarily when the degree of atypicality is nonzero.
\subsection{}
Verma flag formulae for all tilting modules (and consequently, projective modules via BGG reciprocity and Soergel duality) in the Category $\OO$ of $\gl(m|n)$ are provided in \cite{CLW15}, proving the conjecture in \cite{Br03}. However, these formulae are given in terms of Brundan-Kazhdan-Lusztig polynomials, which are not easily computed and do not readily offer concrete multiplicities. Using these polynomials, the authors of \cite{CW08} were able to produce explicit standard filtration formulae for projectives in an atypical block of $\gl(2|1)$.

\subsection{} In this work, we use the tool of translation functors to determine the characters of projective modules in the BGG Category $\OO$ for the general linear Lie superalgebras $\gl(3|1)$, and $\gl(2|2)$ (after verifying the $\gl(2|1)$ case with the results in \cite{CW08}). Specifically, we explicitly determine the Verma multiplicities of standard filtration of projective modules in atypical blocks in $\OO$. For $\gl(3|1)$, we examine blocks of degree of atypicality $1$. There are infinitely many inequivalent atypical blocks. For $\gl(2|2)$, we examine blocks of degree of atypicality $2$. In this case, there is only one such integral block. Then, BGG reciprocity allows us to convert these formulae to formulae for composition multiplicities. We show this in the $\gl(2|2)$ case.
\par
\subsection{} Our general approach of using translation functors is as follows. Given some projective cover $P_\lambda$ for which we wish to deduce Verma multiplicities, we find some $P_\mu$ with known Verma multiplicities and some finite-dimensional representation $N$ such that the Verma module $M_\lambda$ appears in a standard filtration of $P_\mu \otimes N$. If $\lambda$ is the lowest weight appearing among all the weights linked to $\lambda$ appearing in the Verma flag, then $P_\lambda$ is a direct summand for the projection of $P_\mu \otimes N$ on to the block corresponding to $\lambda$. In most cases, it is the only direct summand. 
\par
A particularly useful set of criteria for determining whether a summand is direct and for verifying indecomposability is stated in Proposition \ref{filprop}. These criteria follow from similar criteria on tilting modules (cf. \cite{CW18}), which themselves are derived from the Super Jantzen sum formula (cf. \cite{Mus12}). Verifying indecomposability is a non-trivial step, as a priori it is not evident whether or not translation functors yield an indecomposable projective. See \cref{conditions} and \cref{strategy} for explicit details and justification.
\par
Our approach shows that in the cases we consider, standard filtrations always have Verma modules with multiplicity $1$ or $2$. By BGG reciprocity, these formulae determine the composition factors for Verma modules in $\OO$.
\subsection{}
In \cref{prelims}, we recall basic structure theorems for $\gl(m|n)$, fixing a Cartan subalgebra, a root system, a fundamental system, and defining linkage. Also, we recall the BGG Category $\OO$, review relevant results in the super case, and offer conditions when Verma modules appear in the standard filtration of projective modules.
\par
The sections \cref{sec5}, \cref{sec6}, and \cref{sec7} contain our original results. We find standard filtration multiplicities for projective modules of weights of degree of atypicality $1$ when $\g = \gl(3|1)$ and of degree of atypicality $2$ when $\g = \gl(2|2)$. These results are justified using the general facts in \cref{prelims} and the strategy of translation functors. We then compute the composition multiplicities of irreducibles for $\gl(2|2)$ for weights of degree of atypicality $2$.

\subsection*{\textbf{Acknowledgements}}This paper is a slightly modified version of the author's undergraduate thesis, which was supervised under Dr. Weiqiang Wang. Wang graciously extended the support of NSF grant DMS-1702254 to the author. This paper is based upon work supported by the National Science Foundation Graduate Research Fellowship Program under Grant No. 1842490.

\section{Preliminaries}\label{prelims}
We shall introduce some basic notation for the Lie superalgebra $\gl(m|n)$ in this section.
\subsection{Notation}
Suppose $V = \mathbb{C}^{m|n}$. Let $\{\ov{1}, \ov{2} \dots, \ov{m}\}$ and $\{1, 2, \dots, n\}$ parametrize the standard bases for the even and odd subspaces of $V$, $\mathbb{C}^m$ and $\mathbb{C}^n$, respectively. Denote
\begin{equation}
I(m,n) = \{\ov{1}, \ov{2}, \dots, \ov{m}; 1, 2, \dots, n\}
\end{equation}
where we impose the total order 

\begin{equation}
\ov{1} < \cdots < \ov{m} < 0 < 1 < \cdots < n.
\end{equation}
Here, $0$ is introduced for convenience. Let $\h$ be the Cartan subalgebra of diagonal matrices in $\gl(m|n)$, and let $\{\delta_i, \epsilon_j\}_{i,j}$ denote the basis of $\h^*$ dual to the canonical basis. Whenever convenient for $1 \leq i \leq m$, write 
\[\epsilon_{ \ov{i}} := \delta_i.\]
\par
The bilinear form $(\cdot, \cdot): \h \times \h \longrightarrow \mathbb{C}$ given by the supertrace $(x, y) \coloneqq \mathrm{str}(xy)$ naturally induces a bilinear form on $\h^*$, which will also be denoted by $(\cdot, \cdot)$. For $i,j\in I(m,n)$, we have:
\begin{equation}
(\epsilon_i, \epsilon_j) = 
\begin{cases}
 1 & \ov{1} \leq i = j \leq \ov{m} \\ 
-1 & 1 \leq i = j \leq n \\
 0 & i\neq j.
\end{cases}
\end{equation}
Note that 
\[(\delta_i - \epsilon_j, \delta_i - \epsilon_j) = 0\]
for $1 \leq i \leq m$ and $1 \leq j \leq n$. 
We can define the corresponding weight lattice $X$ in $\h^*$:
\begin{equation}
X \coloneqq \bigoplus_{i \in I(m,n)} \mathbb{Z}\epsilon_i.
\end{equation}
Let $\Phi$ denote the root system of $\gl(m|n)$ with respect to $\h^{*}$, and let $\Phi_{\ov{0}}$ and $\Phi_{\ov{1}}$ be the even and odd roots in $\Phi$, respectively. A root $\alpha \in \Phi$ is said to be {isotropic} if $(\alpha, \alpha) = 0$. Let $\bar{\Phi}_{\one}$ denote the set of all isotropic roots. Let $\Phi^+ = \{\epsilon_i - \epsilon_j\ | \ i < j \in I(m,n)\}$ and $\Pi = \{\epsilon_i - \epsilon_{i+1} \ | \ i \in I(m-1,n-1)\}\cup\{\epsilon_{\ov{m}} - \epsilon_1\}$ be a positive system and a fundamental system, respectively. Lastly, let $\mathcal{W} \cong S_n \times S_m$ denote the Weyl group of $\gl(m|n)$ with natural action on $\h^{*}$.
\par
Furthermore, we can define for any $\alpha \in \Phi_{\oo}$ the corresponding coroot $\alpha^\vee \in [\g_\alpha, \g_{-\alpha}] \subseteq \h$ such that 
\begin{equation}
\langle \lambda, \alpha^\vee \rangle = \frac{2(\lambda, \alpha)}{(\alpha, \alpha)}   \ \ \forall \lambda \in \h^*.
\end{equation}
The simple reflection $s_\alpha$ acts on $\h^*$ as expected: $s_\alpha(\lambda) = \lambda - \langle \lambda, \alpha^\vee \rangle \alpha$.
\par
Define the Weyl vector $\rho$ as follows:
\begin{equation}\label{normWeyl}
\rho = \sum_{i=1}^m (m-i+1)\delta_i - \sum_{j=1}^n j\epsilon_j.
\end{equation} 
\par
A weight $\lambda \in \h^*$ is said to be {antidominant} if $\langle \lambda + \rho, \alpha^\vee\rangle \not\in \mathbb{Z}_{> 0}$ and {dominant} if $\langle \lambda + \rho, \alpha^\vee\rangle \not\in \mathbb{Z}_{< 0}$ for all $\alpha \in \Phi^+_{\oo}$. 
\subsection{Atypicality and Linkage}\label{link}
The notion of linkage in the super case is similar to that of semisimple Lie algebras. However, the key distinction is that while blocks of modules in the semisimple Lie algebra case are finite, isotropic roots allow for blocks in the super case to be infinite. This arises because of a notion called atypicality.
\par
Let $\g = \gl(m|n)$ and $\h$ be the Cartan subalgebra of $\g$ consisting of the diagonal matrices with standard basis for $\h^*$ and standard choices for the root system as above.
\par
The {degree of atypicality} of $\lambda \in \h^*$, denoted $\#\lambda$, is the {maximum number of mutually orthogonal positive isotropic roots} $\alpha \in \bar{\Phi}_{\one}$ such that $(\lambda + \rho, \alpha) = 0$. An element $\lambda \in \h^*$ is said to be {typical} (relative to $\Phi^+$) if $\#\lambda = 0$ and is atypical otherwise.
\par
A relation $\sim$ on $\h^*$ can be defined as following. We say $\lambda \sim \mu \ \lambda,\mu \in \h^*$ if there exist mutually orthogonal isotropic odd roots $\alpha_1,\alpha_2,\dots,\alpha_l$, complex numbers $c_1,c_2,\dots,c_l$, and an element $w \in \W$ satisfying:
\begin{equation}
\mu + \rho = w\left(\lambda + \rho - \sum_{i=1}^{l}c_i\alpha_i\right), \ \ \ (\lambda + \rho, \alpha_i) = 0, i = 1\dots , l.
\end{equation}
The weights $\lambda$ and $\mu$ are said to be {linked} if $\lambda \sim \mu$. It can be shown that linkage is an equivalence relation.
\par
Given a fundamental root system $\Pi$, we can establish the {Bruhat order} on $\h^*$ as follows. Let $\lambda, \mu \in \h^*$. We say $\lambda \geq \mu$ if $\lambda \sim \mu$ and $\lambda - \mu \in \mathbb{Z}_{\geq 0}\Pi$ (i.e the nonnegative sum of simple roots). 
\par
Atypicality may not seem clear at first, so we introduce notation to elucidate the phenomenon. There exists a natural bijection between the integral weight lattice $X$ and $\mathbb{Z}^{m+n}$ where $\lambda \in X$ maps to $(q_1, q_2, \dots, q_m \ | \ r_1, r_2, \dots r_n) \in \mathbb{Z}^{m+n}$ if 
\[\lambda = \sum_{i=1}^{m}q_i\delta_i - \sum_{j=1}^{n}r_j\epsilon_j.\] 
Denote this identification with the congruence symbol $\cong$. By abuse of notation, we shall also use $\rho$ to denote the image $(m,m-1,\dots,1 \ | \ 1,2,\dots,n)$ of the  Weyl vector under this identification; the context will make it clear to which we refer. Furthermore, the action of the Weyl group $\W \cong S_m \times S_n$ is clear. We can permute everything to the left of the bar and to the right of the bar, but no coefficient may cross the bar.
\par
This bijection highlights atypicality very nicely. The degree of atypicality of the weight $(q_1, q_2, \dots, q_m \ | \ r_1, r_2, \dots r_n) - \rho$ is read by counting the number of pairs $(q_i, r_j)$ such that $q_i = r_j$, with the important stipulation no $q_i$ or $r_j$ be reused. The corresponding set of mutually orthogonal roots are $\delta_i - \epsilon_j$ for each pair $(i, j)$. The degree of the atypicality is also given by the size of the multiset $\{q_i\}_{i=1}^m\cap\{r_j\}_{j=1}^{n}$. In particular, if none of the $q_i$ coincide with the $r_j$, the weight is typical.
\subsection{Examples}\label{examples}
In this subsection, we take a concrete look at roots and the weight lattice.
\subsubsection{$\gl(3|1)$}\label{ex:gl31} With the Cartan subalgebra $\h$ given by the diagonal matrices, the bilinear form given by the supertrace induces a basis for $\h^*$ given by $\{\delta_1, \delta_2, \delta_3, \epsilon\}$, where we write $\epsilon$ to abbreviate $\epsilon_1$. By the standard convention above, the positive even roots are $\delta_1 - \delta_2$, $\delta_2 - \delta_3$, and $\delta_1 - \delta_3$, and the positive odd roots are $\delta_3 - \epsilon$, $\delta_2 - \epsilon$ and $\delta_1 - \epsilon$. The odd roots are also isotropic. The Weyl vector is given by $\rho = 3\delta_1 + 2\delta_2 +\delta_3 - \epsilon \cong (3,2,1 \ | 1)$.
\par
If $d \neq a,b,c$ are integers, then  $(a, b, c \ | \ d)  - \rho$  is typical, as there are no odd roots to which this weight is orthogonal. The weights $(a, b, c \ | \ c) - \rho$, $(a, b, c \ | \ b)  - \rho$, and $(a, b, c \ | \ a)  - \rho$ are atypical of degree $1$. After a $\rho$-shift, the first case is orthogonal to the odd root $\delta_3 - \epsilon$, the second is orthogonal to $\delta_2 - \epsilon$, and the last is orthogonal to $\delta_3 - \epsilon$. There are no other types of atypicality because none of the odd roots are pairwise orthogonal.
\par
The Weyl group is $\W \cong S_3 \times S_1$. We see that the integral atypical linkage classes are indexed by $a, b \in \mathbb{Z}, \ a \geq b$, with weights of the form $(a,b, c\ | \ c)  - \rho$, $(b,a, c\ | \ c)  - \rho$, $(a,c, b\ | \ c) - \rho$, $(b,c, a\ | \ c) - \rho$, $(c,a, b\ | \ c) - \rho$, and $(c,b, a\ | \ c) - \rho$ with $c \in \mathbb{Z}$ allowed to vary.
\par
Suppose $\lambda \cong (2,1, 3 \ | \ 1)$ and $\mu \cong (4,3,2 \ | \ 4)$. The weights $\lambda - \rho$ and $\mu  - \rho$ are linked because adding the odd root $\delta_2 - \epsilon$ (which is orthogonal to $\lambda$) to $\lambda$ thrice and then applying a Weyl group element yields $\mu$. 

\subsubsection{$\gl(2|2)$}\label{ex:gl22} With the Cartan subalgebra $\h$ given by the diagonal matrices, the bilinear form given by the supertrace induces a basis for $\h^*$ given by $\{\delta_1, \delta_2, \epsilon_1, \epsilon_2\}$. By the standard convention above, the positive even roots are $\delta_1 - \delta_2$ and $\epsilon_1 - \epsilon_2$. The positive odd roots (also isotropic) are $\delta_1 - \epsilon_1$, $\delta_1 - \epsilon_2$, $\delta_2 - \epsilon_1$, and $\delta_2 - \epsilon_2$. Observe now that we can choose two isotropic roots such that they are mutually orthogonal; one choice is $(\delta_1 - \epsilon_2, \delta_2 - \epsilon_1) = 0$ and the other is $(\delta_1 - \epsilon_1, \delta_2 - \epsilon_2) = 0$. This introduces weights of atypicality of degree $2$. The Weyl vector is given by $\rho = 2\delta_1 + \delta_1 - \epsilon_1 - 2\epsilon_2 \cong (2,1\ | \ 1, 2)$.
\par
If $a,b,c,d \in \mathbb{Z}$ and $\{a,b\} \cap \{c,d\} = \varnothing$, then $(a, b\ | \ c, d) - \rho$ is typical, as there are no odd roots to which this weight is is orthogonal after a $\rho$-shift. The difference from the previous two cases is that atypicality of degree $2$ is now possible. Because there are two pairs of two mutually orthogonal roots, weights of the form $(a, b \ | \ b, a) - \rho$ and $(a, b\ | \ a, b) - \rho$ are atypical of degree $2$.
\par
The Weyl group is $\W \cong S_2 \times S_2$. We see that there is one integral atypical linkage class of degree $2$, with weights of the form $(a, b \ | \ b, a) - \rho$, $(a, b \ | \ a, b) - \rho$, $(b, a \ | \ b, a) - \rho$, and $(b, a \ | \ a, b) - \rho$, where $a \geq b \in \mathbb{Z}$ are free to vary. For example, the weight $(2, 1 \ | \ 1, 2) -\rho$ is linked to $(3, 5 \ | \ 5, 3) - \rho$, but not to $(3, 3 \ | \ 5, 5) - \rho$.
\par
Suppose $\lambda \cong (2,1 \ | \ 1, 2)$ and $\mu \cong (5, 8 \ | \ 5 \ 8)$. The weights $\lambda - \rho$ and $\mu - \rho$ are linked because adding the odd root $\delta_2 - \epsilon_1$ four times and $\delta_1 - \epsilon_2$ six times to $\lambda$ and then applying a Weyl group element yields $\mu$. Observe that these odd roots are both orthogonal to $\lambda$ and are orthogonal to each other.
\subsection{The BGG Category $\OO$}
From now on, let $\g = \gl(m|n) = \g_{\oo} \oplus \g_{\one}$ with the standard associated bilinear form, root system, and triangular decomposition: $\g = \n^-\oplus\h\oplus\n^+$ and $\bo = \h\oplus\n^+$. Recall that the {BGG category} $\OO$ is the full subcategory of $U(\g)$-modules $M$ subject to the following three conditions:
\begin{enumerate}
\item $M$ is finitely generated.
\item $M$ is $\h$-semisimple: $M = \bigoplus_{\lambda \in X} M^\lambda$, where $M^\lambda = \{v \in M \ | \ h \cdot v = \lambda(h)v \ \ \forall h \in \h \}$ is a nonzero weight space.
\item $M$ is locally $\n^+$-finite: $U(\n^+)\cdot v$ is finite dimensional for $\forall v \in M$. 
\end{enumerate}
Observe that the abelian quotient algebra $\bo /\n^+ \cong \h$. Thus, any $\lambda \in \h^*$ naturally defines a one-dimensional $\bo$-module with trivial $\n^+$-action, which we denote as $\mathbb{C}_\lambda$. Specifically, if $v \in \mathbb{C}_\lambda$, then $h\cdot v = \lambda(h) v, \ \forall h \in \h$. Now, define
\begin{equation}
M_\lambda \coloneqq U(\g)\otimes_{U(\bo)}\mathbb{C}_{\lambda - \rho},
\end{equation}
where $\rho$ is the Weyl vector. This is naturally a left $\g$-module. This is called a {Verma module} of highest weight $\lambda - \rho$.
\par
We let $L_\lambda$ denote the unique simple quotient of $M_\lambda$ of highest weight $\lambda - \rho$, and use the notation $[M_\mu : L_\lambda]$ to denote the multiplicity of $L_\lambda$ in a composition series of $M_\mu$. Such a series exists for all $M$ in $\OO$.
\par
In the notation introduced in \cref{link}, if $\lambda \cong (q_1\dots q_m \ | \ r_1\dots r_n)$, write $M_{q_1\dots q_m | r_1\dots r_n}$ to denote $M_{\lambda}$ and $L_{q_1\dots q_m | r_1\dots r_n}$ to denote $L_{\lambda}$. 
\subsection{Blocks in $\OO$}\label{blocks}
The integral blocks in $\OO$ can be divided into typical and atypical blocks. By definition, any simple module in a typical block has typical highest weight. By Gorelik \cite{Gor02}, any integral typical block in $\OO$ is equivalent to a block in the BGG Category of $\g_{\oo}$-modules. It remains to better understand the atypical blocks.
\par
Now, recall the examples in \cref{examples}. In $\gl(3|1)$, the linked weights are $(a,b, c\ | \ c)  - \rho$, $(b,a, c\ | \ c)  - \rho$, $(a,c, b\ | \ c) - \rho$, $(b,c, a\ | \ c) - \rho$, $(c,a, b\ | \ c) - \rho$, and $(c,b, a\ | \ c) - \rho$ with $a,b,c \in \mathbb{Z}$ and $c$ allowed to vary. We will let $\Bl_{a,b}$ denote the corresponding block. Lastly, in $\gl(2|2)$, there is only one block of atypicality degree $2$; we will denote it as $\Bl_0$.

\subsection{Key Results in $\OO$}
The primary means by which the goals of this paper are achieved are by using translation functors. We restate the necessary results to justify our steps. This collection of results is justified in \cite[Chap. 1-3]{Hum08} for the BGG Category $\OO$ for semisimple Lie algebras; similar arguments extend them to the BGG Category $\OO$ of $\gl(m|n)$-modules.
\begin{thm}\label{tran}
Let $\g = \gl(m|n)$. Let $N$ be a finite dimensional $U(\g)$-module. For any $\lambda \in \h^*$, the tensor module $T \coloneqq M_\lambda \otimes N$ has a finite filtration with quotients isomorphic to Verma modules of the form $M_{\lambda+\mu}$, where $\mu$ ranges over the weights of $N$, each occurring $\mathrm{dim} \ N^\mu$ times in the filtration.
\end{thm}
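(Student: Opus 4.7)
The plan is to reduce the assertion to a purely $\bo$-module computation using the \emph{tensor identity}. Since $\M{\la} = U(\g) \otimes_{U(\bo)} \C_{\la - \rho}$ by definition, I would first establish the $\g$-module isomorphism
\begin{equation*}
\M{\la} \otimes N \;\cong\; U(\g) \otimes_{U(\bo)} \bigl(\C_{\la - \rho} \otimes N|_{\bo}\bigr),
\end{equation*}
with $\g$ acting diagonally on the tensor product on the left and only on the outer $U(\g)$ factor on the right. The map $u \otimes (c \otimes n) \mapsto \sum (u_{(1)} \otimes c) \otimes u_{(2)} n$, using the coproduct $\Delta$ of the super Hopf algebra $U(\g)$, descends to the quotient by $U(\bo)$ and is $\g$-equivariant; its inverse is built from the antipode. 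Koszul signs are tracked as in the classical proof.

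Once the tensor identity is in hand, the main work is to filter the $\bo$-module $V \coloneqq \C_{\la - \rho} \otimes N|_{\bo}$ by $\bo$-submodules with one-dimensional quotients. Because $V$ is finite-dimensional and $\h$-semisimple with weight multiplicities $\dim V^{\la - \rho + \mu} = \dim N^{\mu}$, I would enumerate a weight basis $v_1, \ldots, v_k$ of $V$ (with $v_j$ of weight $\nu_j$) so that the indexing refines the partial order by positive roots, i.e.\ $\nu_{i'} - \nu_i \in \Z_{\geq 0}\Phi^+ \setminus \{0\}$ implies $i' > i$. Setting $V_j \coloneqq \mathrm{span}\{v_i : i > j\}$ gives a chain $0 = V_k \subset V_{k-1} \subset \cdots \subset V_0 = V$, and each $V_j$ is in fact a $\bo$-submodule: a positive root vector $x \in \g_\al$ sends $v_i$ into the weight space $V^{\nu_i + \al}$, which is spanned by basis vectors $v_{i'}$ of strictly greater weight, hence with $i' > i$, so $x \cdot V_j \subseteq V_j$. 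The successive quotient $V_{j-1}/V_j$ is then one-dimensional of weight $\nu_j$ with $\n^+$ acting by zero, so $V_{j-1}/V_j \cong \C_{\nu_j}$ as $\bo$-modules.

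Applying the exact induction functor $U(\g) \otimes_{U(\bo)} (-)$ to this chain yields a filtration of $\M{\la} \otimes N$ whose successive quotients are $U(\g) \otimes_{U(\bo)} \C_{\la - \rho + \mu_j} = \M{\la + \mu_j}$, where $\mu_j$ is the weight of $N$ satisfying $\nu_j = \la - \rho + \mu_j$; by construction each weight $\mu$ of $N$ appears exactly $\dim N^{\mu}$ times in the list $\mu_1, \ldots, \mu_k$. Exactness of the induction functor follows from the super PBW theorem, which realizes $U(\g)$ as a free right $U(\bo)$-module with basis given by ordered monomials in a complement of $\bo$ in $\g$ (the negative root vectors, with odd generators occurring to exponent at most one).

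The main technical point to be careful with is the super tensor identity at the very start: although its statement is formally identical to the semisimple case, $\g$-equivariance must be verified with the Koszul sign conventions inherited from the super Hopf algebra structure on $U(\g)$, and freeness of $U(\g)$ over $U(\bo)$ in the $\Z_2$-graded sense requires super PBW rather than the ordinary version. Both are standard, and once they are in place the proof becomes a routine transfer of the Humphreys argument from semisimple Lie algebras to $\gl(m|n)$, with no new obstacle arising from the odd root vectors in $\bo$ or $\n^-$.
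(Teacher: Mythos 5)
Your proof is correct and is essentially the argument the paper defers to: the paper proves nothing here, instead citing Humphreys (Chap.\ 1--3), and the standard proof there is exactly your tensor identity $\M{\lambda}\otimes N \cong U(\g)\otimes_{U(\bo)}(\C_{\lambda-\rho}\otimes N|_{\bo})$ followed by a $\bo$-stable weight filtration of the finite-dimensional $\bo$-module. You correctly flag super PBW (for freeness of $U(\g)$ over $U(\bo)$, hence exactness of induction) and Koszul-sign bookkeeping in the Hopf-algebra tensor identity as the only points where the classical argument needs adaptation, which is precisely what the paper's remark that ``similar arguments extend'' is gesturing at.
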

\par
A module $N \in \OO$ has a {standard filtration} or a {Verma flag} if there is a sequence of submodules $0 = N_0 \subset N_1 \subset N_2 \subset \cdots N_k = N$ such that each $N_i/N_{i-1} \ 1\leq i \leq k$ is isomorphic to a Verma module.  The number of times the Verma module $M_\lambda$ appears in a standard filtration of $N$ is denoted by $(N : M_\lambda)$. 
\par
It can be shown that the length and the Verma multiplicities in a standard filtration are independent of choice of a standard filtration. Therefore,  the following informal notation to indicate a standard filtration of a module is useful. If $M_{\lambda_i}, \ \lambda_i \in \h^*, \ 1 \leq i \leq k$ are the Verma modules appearing with multiplicity $c_i \in \mathbb{Z}_{>0}$ in a standard filtration of a module $N$, we write:
\begin{equation}
N = c_1M_{\lambda_1} + c_2M_{\lambda_2} + \dots + c_kM_{\lambda_k}
\end{equation}
\par
Similarly, if $L_{\mu_i}, \ \mu_i \in \h^{*}, \ 1 \leq i \leq k$ are the irreducibles appearing with multiplicity $d_i \in \Z_{> 0}$ in a composition series of a module $N$, we write
\begin{equation}
N = d_1L_{\mu_1} + d_2L_{\mu_2} + \dots + d_kL_{\mu_k}
\end{equation}
\par
We let $P_\lambda$ denote the (unique) projective cover for $L_\lambda$ for all $\lambda \in \h^*$, that is the indecomposable projective such that $P_\lambda \twoheadrightarrow L_\lambda \rightarrow 0$. We recall the following facts about projectives.
\begin{enumerate}
\item All projectives have a standard filtration. \label{p0}
\item The Category $\OO$ has enough projectives. \label{p1}
\item The Verma modules $M_\mu$ which appear in a standard filtration of $P_\lambda$ satisfy $\mu \geq \lambda$ in the Bruhat ordering, and $M_\lambda$ appears with multiplicity $1$. \label{p4}
\end{enumerate}
The following proposition, which follows from Theorem \ref{tran}, is a critical part of our translation functor arguments.
\begin{prop}\label{sum}
If a projective $P$ has a standard filtration given by $P_\lambda = \sum_{\nu} M_{\nu}$, the $\nu$ not necessarily distinct, then for any finite-dimensional representation $N$ with weights $\mu$, the standard filtration for $P\otimes N$ is given by $\sum_{\nu}\sum_{\mu} M_{\nu + \mu}$, where $\mu$ appears in the sum with multiplicity given by $\mathrm{dim}\ N^\mu$.
\end{prop}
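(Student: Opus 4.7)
The plan is to proceed by induction on the length $k$ of the standard filtration $0 = Q_0 \subset Q_1 \subset \cdots \subset Q_k = P$ with $Q_i / Q_{i-1} \cong M_{\nu_i}$, so that the multiset $\{\nu_1, \ldots, \nu_k\}$ agrees with the indexing set $\{\nu\}$ in the statement. The base case $k = 1$ is immediate: here $P \cong M_{\nu_1}$, and Theorem \ref{tran} asserts exactly that $M_{\nu_1} \otimes N$ has a standard filtration with quotients $M_{\nu_1 + \mu}$, where $\mu$ ranges over the weights of $N$ with multiplicity $\dim N^\mu$, matching $\sum_\mu M_{\nu_1 + \mu}$ on the nose.

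For the inductive step, consider the short exact sequence $0 \to Q_{k-1} \to P \to M_{\nu_k} \to 0$. Since $N$ is a finite-dimensional complex vector space, the functor $-\otimes_{\mathbb{C}} N$ (with the diagonal $\g$-action) is exact on $U(\g)$-modules, so applying it yields
\[
0 \longrightarrow Q_{k-1} \otimes N \longrightarrow P \otimes N \longrightarrow M_{\nu_k} \otimes N \longrightarrow 0.
\]
By the inductive hypothesis, $Q_{k-1} \otimes N$ has a standard filtration whose Verma quotients are $M_{\nu_i + \mu}$ for $1 \le i \le k-1$ and $\mu$ a weight of $N$, each appearing with multiplicity $\dim N^\mu$. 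By Theorem \ref{tran}, $M_{\nu_k} \otimes N$ also has a standard filtration, with quotients $M_{\nu_k + \mu}$ of multiplicity $\dim N^\mu$. Pulling this latter filtration back along the surjection $P \otimes N \twoheadrightarrow M_{\nu_k} \otimes N$ and prepending the filtration of $Q_{k-1} \otimes N$ then produces a chain of submodules of $P \otimes N$ whose successive quotients are precisely the Verma modules listed on both sides of the inductive hypothesis, yielding the desired total multiset $\sum_\nu \sum_\mu M_{\nu + \mu}$.

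The only substantive point to check is that the concatenated chain is genuinely a standard filtration, i.e.\ that the successive quotients really are the claimed Verma modules in the required order. This reduces to the standard homological fact that if $0 \to A \to B \to C \to 0$ is a short exact sequence and both $A$ and $C$ admit Verma flags, then the preimages in $B$ of the filtration of $C$, concatenated with the filtration of $A$, give a Verma flag of $B$ whose multiset of quotients is the disjoint union of those of $A$ and $C$. I do not expect any real obstacle: the proof is essentially formal, relying only on the exactness of $-\otimes N$ and on Theorem \ref{tran} as the single-Verma input.
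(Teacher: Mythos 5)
The paper offers no explicit proof of this proposition; it simply remarks that the statement ``follows from Theorem \ref{tran}'' and leaves the verification to the reader. Your induction on the length of the Verma flag is precisely the standard argument one would supply here: the base case is Theorem \ref{tran} itself, and the inductive step uses exactness of $-\otimes_{\mathbb{C}} N$ (automatic since $N$ is a finite-dimensional vector space over a field) together with the fact that a short exact sequence whose outer terms admit Verma flags yields a Verma flag on the middle term by concatenation. This is correct and complete, and matches the intended (if unstated) reasoning of the paper.
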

 The following lemma will also be useful in our arguments. It is predicated on the fact that the Weyl groups for $\gl(3|1)$ and $\gl(2|2)$ are products of dihedral groups (cf. \cite{CW18}).
\begin{lem}\label{typAndDom}
If $\g = \gl(m|n)$ and $m, n \leq 3$ and $\lambda \in X$ is typical, then the Verma modules that appear in a standard filtration of $P_\lambda$ are of the form $M_{w\lambda}$, where $w \in \W$ such that $w\lambda \geq \lambda$, and each Verma module appears with multiplicity $1$. 
\end{lem}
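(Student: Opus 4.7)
The plan is to transfer the statement to the classical (non-super) Category $\OO$ via Gorelik's equivalence for typical blocks, and then to exploit the fact that the Weyl groups $S_m \times S_n$ with $m,n \leq 3$ are products of dihedral groups, for which Kazhdan--Lusztig polynomials are trivial.

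First, since $\lambda$ is typical, the linkage relation reduces to the $\W$-orbit of $\lambda$: no isotropic root $\alpha$ satisfies $(\lambda+\rho,\alpha)=0$, so the sum $\sum_i c_i\alpha_i$ appearing in the definition of $\sim$ must be empty, and the block of $\lambda$ consists precisely of the weights $w\lambda$, $w\in\W$. In particular, any $M_\mu$ appearing in a standard filtration of $P_\lambda$ satisfies $\mu = w\lambda$ for some $w\in\W$. By \cite{Gor02}, the typical block containing $\lambda$ is equivalent (as a highest weight category) to an integral block of the BGG category for $\g_{\oo} \cong \gl(m)\oplus\gl(n)$, and this equivalence identifies Vermas, simples, projective covers, and the Bruhat order on weights. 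So it suffices to prove the analogous multiplicity statement in the semisimple setting.

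In the semisimple setting, BGG reciprocity gives $(P_\lambda : M_\mu) = [M_\mu : L_\lambda]$, and the right-hand side is computed by a value $P_{x,y}(1)$ of a Kazhdan--Lusztig polynomial indexed by suitable $x,y\in \W$ determined by $\lambda,\mu$. Since $\W \cong S_m\times S_n$ with $m,n\leq 3$ is a product of dihedral groups (trivial, of order $2$, or of order $6$), and Kazhdan--Lusztig polynomials for a direct product are products of those for the factors, it suffices to invoke the classical fact that $P_{x,y}(q) = 1$ for every $x\leq y$ in a dihedral group. Consequently every composition multiplicity $[M_\mu : L_\lambda]$ is $0$ or $1$, and by BGG reciprocity the same is true for $(P_\lambda : M_\mu)$.

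Finally, property (3) in the list of projective facts recalled above forces any Verma $M_\mu$ appearing in a standard filtration of $P_\lambda$ to satisfy $\mu \geq \lambda$ in the Bruhat order, and we have already identified each such $\mu$ as $w\lambda$ for some $w\in\W$. Assembling these three observations yields the lemma. The only nontrivial input is the triviality of Kazhdan--Lusztig polynomials for dihedral groups; this is classical, and the fact that $\W$ decomposes as a product of dihedral groups is the one recorded in \cite{CW18}. I do not expect any real obstacle beyond citing the right classical statements and keeping the $\rho$-shift conventions straight when transferring the Bruhat order across Gorelik's equivalence.
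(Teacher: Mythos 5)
Your argument is exactly the one the paper has in mind: the paper gives no formal proof of Lemma \ref{typAndDom}, only the preceding remark that the relevant Weyl groups are products of dihedral groups (citing \cite{CW18}), and your write-up is the natural elaboration of that remark via Gorelik's equivalence \cite{Gor02}, BGG reciprocity, and the triviality of Kazhdan--Lusztig polynomials for products of dihedral groups. The one point you flag but do not spell out --- that the lemma must also cover non-regular typical $\lambda$ (e.g.\ $P_\mu = M_{c+1,c+1,c\,|\,c+2}$ is used in the proofs), where the identification of $[M_\mu : L_\lambda]$ with a single $P_{x,y}(1)$ is not literal and one must pass through the translation principle or parabolic KL theory --- is a genuine subtlety, but the multiplicity-one conclusion still holds in that setting for dihedral $\W$, so the proof goes through as you expect.
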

Lastly, we recall BGG reciprocity.
\begin{equation}\label{BGGrecip}
(P_\lambda : M _\mu) = [M_\mu : L_\lambda], \ \ \lambda, \mu \in \h^* .
\end{equation}
\subsection{Some Representations of $\gl(m|n)$}
The strategy of using translation functors involves choosing appropriate representations to tensor with projective modules to produce new modules.
\par
The {natural representation} $V = \mathbb{C}^{m|n}$ be of $\g = \gl(m|n)$ has weights $\{ \epsilon_i \ | \ i \in I(m,n)\}$, while the {dual representation} $V^*$ has weights $\{ -\epsilon_i \ | \ i \in I(m,n)\}$. 
\par
The {exterior algebra} of a finite-dimensional vector superspace. Let $W = W_{\oo}\oplus W_{\one}$ be a vector superspace. Then, we can define the $k$-th exterior power of $W$ as follows:
\begin{equation}
\extp^k(W) \coloneqq \bigoplus_{i+j=k} \left( \Lambda^i(W_\oo)\otimes \mathrm{S}^j(W_{\one}) \right)
\end{equation} 
where $\Lambda^i$ and $\mathrm{S}^j$ acting on vector spaces are the $i$-th exterior power and $j$-th symmetric power in the traditional sense, respectively. We will particularly be interested in the $k$-th exterior power when $k = 2$ or $k = 3$ and $W = V$ or $W = V^*$, which we refer to as wedge-squared or wedge-cubed of the natural or of the dual, respectively.

\subsection{Conditions for nonzero Verma flag multiplicities in projective modules}\label{conditions}
We have the following proposition, which uses BGG reciprocity to reformulate the conditions for tilting modules in \cite[Proposition 2.2]{CW18} as conditions for projective modules.
\begin{prop}\label{filprop}
 
Suppose that $\lambda \in X, \alpha_i \in \Phi_{\bar{0}}^+, 1 \leq i \leq k,$ and $\beta, \gamma \in \Phi_{\bar{1}}^+$. Let $w = \prod_{i=1}^k s_{\alpha_i} \in \W$. 
	\begin{enumerate}
	    \item Suppose that $\langle\lambda, \alpha_1^\vee\rangle < 0$. Then $(P_{\lambda} : M_{s_{\alpha_1}\lambda}) > 0$. \label{1}
	    
	    \item Suppose that $\langle s_{\alpha_{i-1}}\cdots s_{\alpha_1}\lambda, \alpha_i^\vee\rangle < 0 \ \forall i \in {1,2,\dots,k}$. then $(P_{\lambda} : M_{w\lambda}) > 0$. \label{2}
	    
	    \item  Suppose that $(\lambda, \beta) = 0$. Then $(P_{\lambda} : M_{\lambda + \beta}) > 0$. \label{3}
	    
	    \item Suppose that $(\lambda, \beta) = 0$ and  $\langle s_{\alpha_{i-1}}\cdots s_{\alpha_1}(\lambda + \beta), \alpha_i^\vee\rangle < 0 \ \forall i \in {1,2,\dots,k}$. Then $(P_{\lambda} : M_{w(\lambda + \beta)}) > 0$. \label{4}
	    
	    \item Suppose that $(\lambda, \beta) = (\lambda + \beta, \gamma) = 0$ and $\mathrm{ht}(\beta) < \mathrm{ht}(\gamma)$. Then $(P_{\lambda} : M_{\lambda + \beta + \gamma}) > 0$. \label{5}
	    
	    \item Suppose that $(\lambda, \beta) = (\lambda + \beta, \gamma) = 0$, $\mathrm{ht}(\beta) < \mathrm{ht}(\gamma)$, and $\langle s_{\alpha_{i-1}}\cdots s_{\alpha_1}(\lambda + \beta + \gamma), \alpha_i^\vee\rangle < 0 \ \forall i \in {1,2,\dots,k}$. Then $(P_{\lambda} : M_{w(\lambda + \beta + \gamma)}) > 0$. \label{6}
\end{enumerate}
\end{prop}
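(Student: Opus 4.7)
The plan is to deduce each of the six statements from the corresponding statements for tilting modules established as \cite[Proposition 2.2]{CW18}. The bridge between the two is BGG reciprocity (\ref{BGGrecip}): $(P_\lambda : M_\mu) = [M_\mu : L_\lambda]$, combined with the analogous reciprocity for tilting modules, which expresses Verma multiplicities in a tilting module as composition-factor multiplicities in a Verma. Composing these identities converts assertions of the form ``$M_\nu$ appears in the standard filtration of some $T_{(\cdot)}$'' into assertions of the form ``$M_\nu$ appears in the standard filtration of $P_\lambda$,'' after the appropriate relabeling of indices.

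I would proceed case by case. For statement (\ref{1}), the tilting analogue in \cite{CW18} gives, under the hypothesis $\langle \lambda, \alpha_1^\vee\rangle < 0$, a Verma module with parameter $s_{\alpha_1}\lambda$ appearing in the relevant tilting filtration; BGG reciprocity then yields $(P_\lambda : M_{s_{\alpha_1}\lambda}) > 0$. Statement (\ref{2}) follows by iteration: the hypothesis $\langle s_{\alpha_{i-1}}\cdots s_{\alpha_1}\lambda, \alpha_i^\vee\rangle < 0$ for each $i$ is precisely what allows one to chain successive applications of (\ref{1}) through the composition $w = s_{\alpha_k}\cdots s_{\alpha_1}$. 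Statements (\ref{3}) and (\ref{4}) incorporate the isotropic contribution $\beta$, which is intrinsic to the super setting: the orthogonality $(\lambda,\beta) = 0$, via the super Jantzen sum formula underlying \cite{CW18} (cf. \cite{Mus12}), produces a Verma module with parameter $\lambda + \beta$, which after BGG reciprocity becomes the asserted $(P_\lambda : M_{\lambda+\beta}) > 0$; case (\ref{4}) then iterates this with the reflections from (\ref{2}) applied to $\lambda + \beta$. Statements (\ref{5}) and (\ref{6}) handle the two-isotropic-root case: the successive orthogonality conditions $(\lambda,\beta) = (\lambda+\beta,\gamma) = 0$ together with the height inequality $\mathrm{ht}(\beta) < \mathrm{ht}(\gamma)$ are precisely what the tilting-side argument requires to chain two isotropic contributions, and the Weyl reflections in (\ref{6}) add nothing beyond the reasoning of (\ref{2}).

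The main obstacle is purely bookkeeping: verifying that the hypotheses in each numbered case match, under the reciprocity dictionary, the hypotheses of the corresponding clauses in \cite[Prop.~2.2]{CW18}. Once this matching is established, the proof collapses into an appeal to that result combined with (\ref{BGGrecip}); there is no additional representation-theoretic input needed beyond the super Jantzen sum formula already invoked in \cite{CW18}. The only subtle point is ensuring that the height condition $\mathrm{ht}(\beta) < \mathrm{ht}(\gamma)$ in (\ref{5})--(\ref{6}) appears on the correct side of the reciprocity, since it governs the order in which the two isotropic roots are added in the Jantzen filtration argument.
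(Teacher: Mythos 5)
Your proposal follows the same route as the paper: the paper's proof is a two-sentence appeal to the super Jantzen sum formula (which yields conditions on composition factors of Verma modules, as packaged in \cite[Prop.~2.2]{CW18} for tilting modules) and BGG reciprocity \eqref{BGGrecip} to convert those into Verma-flag conditions for $P_\lambda$. Your elaboration of the reciprocity dictionary and the case-by-case matching is correct in spirit and more detailed than the paper's terse justification, but it is the same argument.
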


\begin{proof}
The proposition is originally derived using the Super Jantzen sum formula (cf. \cite{Gor02, Mus12}), giving conditions for composition factors. BGG Reciprocity \eqref{BGGrecip} immediately translates the conditions from those on tilting modules to those on projective modules.
\end{proof}

\begin{cor}\label{corlen}
Suppose $\lambda - \rho \in \h^*$ is atypical. Then $P_\lambda$ must have a Verma flag of length greater than $1$. 
\end{cor}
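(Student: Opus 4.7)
The plan is to directly invoke part (\ref{3}) of Proposition \ref{filprop}. Unwinding the atypicality hypothesis, $\lambda - \rho$ atypical means (by the definition of degree of atypicality applied to the highest weight $\lambda - \rho$ of $M_\lambda$) that there exists a positive isotropic odd root $\beta \in \bar{\Phi}_{\one} \cap \Phi^+$ with
\[
((\lambda - \rho) + \rho, \beta) \;=\; (\lambda, \beta) \;=\; 0.
\]
This is exactly the hypothesis of Proposition \ref{filprop}(\ref{3}), so I conclude
\[
(P_\lambda : M_{\lambda + \beta}) \;>\; 0.
\]

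Next I would note that $\beta \in \Phi^+$ is a nonzero root, so $\lambda + \beta \neq \lambda$, meaning $M_{\lambda+\beta}$ and $M_\lambda$ are distinct Verma modules. By fact (\ref{p4}) recalled in the preliminaries, $M_\lambda$ itself appears with multiplicity one in any standard filtration of $P_\lambda$. Combining these two observations, a standard filtration of $P_\lambda$ contains at least two quotients, one isomorphic to $M_\lambda$ and another isomorphic to $M_{\lambda+\beta}$. Hence the length of a standard filtration of $P_\lambda$ is strictly greater than $1$, as required.

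There is essentially no obstacle here: the only thing to be careful about is the $\rho$-shift convention, namely that the highest weight of $M_\lambda$ is $\lambda - \rho$ and therefore the atypicality condition on $\lambda - \rho$ translates into the orthogonality condition $(\lambda, \beta) = 0$ which is the one needed to feed into Proposition \ref{filprop}(\ref{3}).
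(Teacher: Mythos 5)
Your proof is correct and follows the same route as the paper: cite fact (\ref{p4}) for $M_\lambda$, then apply Proposition \ref{filprop} to produce $M_{\lambda+\beta}$ for a positive isotropic odd root $\beta$ with $(\lambda,\beta)=0$. The paper invokes item (\ref{4}) where you cite (\ref{3}); these agree when $w$ is trivial, and your choice of (\ref{3}) together with the explicit $\rho$-shift bookkeeping is the cleaner reference.
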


\begin{proof}
$M_\lambda$ appears in the standard filtration. Furthermore, because $\lambda$ is atypical, there exists $\beta$ such that $\beta \in \Phi_{\one}^+$ and $(\lambda, \beta) = 0$. Therefore, apply Proposition \ref{filprop}[\ref{4}] to see that $M_{\lambda + \beta}$ also appears in the standard filtration.
\end{proof}

\subsection{Strategy}\label{strategy}
Given an atypical $\lambda -\rho \in \h^*$, we seek to deduce the standard filtration formula of $P_{\lambda}$. To do so, we choose a $\mu \in \h^*$ such that we know a standard filtration for $P_{\mu}$. This is often accomplished by letting $\mu \coloneqq \lambda - \nu$, where $\nu$ is the lowest weight in some finite-dimensional representation $W$ such that $\mu - \rho$ is typical; Lemma \ref{typAndDom} tells us the structure of $P_{\mu}$. Proposition \ref{sum} can be used to deduce the Verma modules which appear in a standard filtration of the projective $P_{\mu} \otimes W$, which must include $M_{\lambda}$. Our next step is to project $P_{\mu} \otimes W$ onto the block corresponding to the linkage class of $\lambda - \rho$. We denote the resulting projection as $\mathrm{Pr}_\lambda(P_\mu \otimes W)$. If $M_{\lambda}$ has the lowest weight of all the Verma modules in the standard filtration of the projection, $P_\lambda$ must appear in that projection as a direct summand, as the direct summands of a projective are projective. The projection itself is done by collecting all Verma modules in the standard filtration whose weights are linked to $\lambda - \rho$.
\par
In this projection, we apply Proposition \ref{filprop} to see which Verma modules appear in the standard filtration of $P_\lambda$. These necessarily appear in the projection because $P_\lambda$ is a direct summand. Then, we generally try to argue that there is no other direct summand (i.e. $P_\lambda$ is the projection). This is often done with the help of Corollary \ref{corlen}. 
\par
As a remark, it is not always necessary to take $\mu \coloneqq \lambda - \nu$, where $\nu$ is the lowest weight in the representation $W$. This is often a good initial choice, but the key requirement is that $\lambda$ be the lowest weight appearing in the standard filtration after the projection on to the block corresponding to the linkage class of $\lambda - \rho$.

\section{Character Formulae for $\gl(3|1)$}\label{sec5}
In this section, we determine Verma multiplicities for standard filtration formulae for projective covers of simple modules of $\gl(3|1)$ with integral, atypical weight of degree $1$.
\subsection{Results}
Let $\g = \gl(3|1)$ have the standard choices of Cartan subalgebra, bilinear form, root system, positive, and fundamental system as described in \cref{prelims}.  Recall the notation described in \cref{link} to describe a weight in $\h^*$. Lastly, recall Example \ref{ex:gl31} and the corresponding blocks $\Bl_{a,b}$, $a,b\in \mathbb{Z}$ (see \cref{blocks}). We have the following Theorems \ref{gl311} to \ref{gl316} that describe standard filtrations of projectives in these blocks.
\begin{thm}\label{gl311}
Let $a, b, c \in \mathbb{Z}$ with $a \geq b$. The projective objects $P_{a, b, c| c}$ in $\Bl_{a,b}$ have the following Verma flag formulae.
\begin{enumerate}[label=(\arabic*), ref=(\arabic*]
	\item Suppose $b > c$.
	\begin{enumerate}[label=\theenumi.\arabic*), ref = \arabic*]
		\item If $b > c+1$, then
        \begin{equation*}
        \begin{aligned}            
        P_{a, b, c| c} = M_{a,b,c|c} + M_{a,b,c+1|c+1}.
        \end{aligned}
        \end{equation*}		
        
		\item Suppose $b = c+1$.
		\begin{enumerate}[label=\theenumi.\theenumii.\arabic*)]
			\item If $a > c+2$, then
	        \begin{equation*}
	        \begin{aligned}
	        P_{a,c+1,c|c} = M_{a,c+1,c|c} + M_{a,c+1,c+1|c+1} + M_{a,c+2,c+1|c+2}.            
	        \end{aligned}
	        \end{equation*}				
			\item If $a = c+2$, then
	        \begin{equation*}
	        \begin{aligned}            
	        P_{c+2,c+1,c|c} = M_{c+2,c+1,c|c} + M_{c+2,c+2,c+1|c+2} \\
	        + M_{c+2,c+1,c+1|c+1} + M_{c+3,c+2,c+1|c+3}.
	        \end{aligned}
	        \end{equation*}				
			\item If $a = c+1$, then
	        \begin{equation*}
	        \begin{aligned}            
	        P_{c+1,c+1,c|c} = M_{c+1,c+1,c|c} + M_{c+1,c+1,c+1|c+1} \\ 
	        + M_{c+1,c+2,c+1|c+2} + M_{c+2,c+1,c+1|c+2}.
	        \end{aligned}
	        \end{equation*}				
		\end{enumerate}
	\end{enumerate}
	\item Suppose $b = c$.
	\begin{enumerate}[label=\theenumi.\arabic*), ref = \arabic*]
		\item If $a > c+1$, then
	        \begin{equation*}
	        \begin{aligned}            
	        P_{a,c,c|c} =M_{a,c,c|c} + M_{a,c,c+1|c+1} + M_{a,c+1,c|c+1}.
	        \end{aligned}	
	        \end{equation*}	        
		\item If $a = c+1$, then
	        \begin{equation*}
	        \begin{aligned}            
	        P_{c+1,c,c|c} = M_{c+1,c,c|c} + M_{c+2,c+1,c|c+2} + M_{c+1,c,c+1|c+1}\\
	         + M_{c+1,c+1,c|c+1} + M_{c+2,c,c+1|c+2}.
	        \end{aligned}
	        \end{equation*}		        
		\item If $a = c$, then
	        \begin{equation*}
	        \begin{aligned}            
	        P_{c,c,c|c} = M_{c,c,c|c} + M_{c,c,c+1|c+1} \\ 
	        + M_{c+1,c,c|c+1} + M_{c+1,c,c|c+1}.
	        \end{aligned}
	        \end{equation*}			
	\end{enumerate}	
	\item Suppose $b < c$.
	\begin{enumerate}[label=\theenumi.\arabic*), ref = \arabic*]
		\item Suppose $a > c$.
		\begin{enumerate}[label=\theenumi.\theenumii.\arabic*)]
			\item If $a > c+1$, then
	        \begin{equation*}
	        \begin{aligned}            
	        P_{a,b,c|c} = M_{a,b,c|c} + M_{a,b,c+1|c+1}\\ + M_{a,c,b|c} + M_{a,c+1,b|c+1}.
	        \end{aligned}
	        \end{equation*}							
			\item If $a = c+1$, then
	        \begin{equation*}
	        \begin{aligned}            
	        P_{c+1,b,c|c} = M_{c+1,b,c|c} + M_{c+1,b,c+1|c+1} + M_{c+2,b,c+1|c+2} \\ 
	        + M_{c+1,c,b|c} + M_{c+1,c+1,b|c+1} + M_{c+2,c+1,b|c+2}.
	        \end{aligned}
	        \end{equation*}				
		\end{enumerate}
		\item If $a = c$, then	
	        \begin{equation*}
	        \begin{aligned}            
	        P_{c,b,c|c} =M_{c,b,c|c} + M_{c,b,c+1|c+1} + M_{c+1,b,c|c+1} \\ 
	        + M_{c,c,b|c} + M_{c,c+1,b|c+1} + M_{c+1,c,b|c+1}.
	        \end{aligned}
	        \end{equation*}			
		\item Suppose $a < c$.
		\begin{enumerate}[label=\theenumi.\theenumii.\arabic*)]
			\item If $a > b$, then
	        \begin{equation*}
	        \begin{aligned}            
	        P_{a,b,c|c} = M_{a,b,c|c} + M_{a,b,c+1|c+1} + M_{a,c,b|c}\\  + M_{c,a,b|c} + M_{a,c+1,b|c+1} + M_{c+1,a,b|c+1}\\ + M_{c,b,a|c} + M_{c+1,b,a|c+1}.
	        \end{aligned}
	        \end{equation*}			
			\item If $a = b$, then 
	        \begin{equation*}
	        \begin{aligned}            
	        P_{b,b,c|c} =M_{b,b,c|c} + M_{b,b,c+1|c+1} + M_{b,c,b|c}\\ 
	         + M_{b,c+1,b|c+1} + M_{c,b,b|c} + M_{c+1,b,b|c+1}.
	        \end{aligned}
	        \end{equation*}				
			
		\end{enumerate}		
	\end{enumerate}		
\end{enumerate}
\end{thm}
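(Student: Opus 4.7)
The plan is to apply the translation-functor strategy of \cref{strategy} case by case across the subcases (1.1)--(3.3.2) of the theorem. For each weight $\lambda = (a,b,c|c)$, the aim is to find a typical weight $\mu \in \h^*$, whose standard filtration for $P_\mu$ is a direct sum of Weyl translates $M_{w\mu}$ as determined by \cref{typAndDom}, together with a small finite-dimensional $\g$-module $W$ such that $\lambda$ arises as $w\mu + \nu$ for some $w \in \W$ and some weight $\nu$ of $W$, and such that $\lambda$ is minimal in Bruhat order within $\Bl_{a,b}$ among all the weights $w'\mu + \nu'$ produced this way. By \cref{sum}, combined with the fact that $P_\lambda$ is the unique indecomposable projective cover of $L_\lambda$, $P_\lambda$ is then a direct summand of the block projection $\mathrm{Pr}_{\lambda}(P_\mu \otimes W)$.

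The natural candidates for $W$ are the natural representation $V = \C^{3|1}$, its dual $V^*$, and the wedge powers $\extp^2 V$ and $\extp^2 V^*$; the corresponding available shifts $\nu$ are then $\pm\epsilon_i$ and $\pm\epsilon_i \pm \epsilon_j$. In the deep generic subcase (1.1) where $b > c+1$, the natural representation $V$ (or $V^*$) already suffices when $\mu$ is chosen one lattice step away from $\lambda$. The boundary subcases (1.2)--(1.2.3), (2.2)--(2.3), (3.1.2), (3.3.2) where some coordinates collide either require a larger $W$ so that multiple weights of $W$ shift into $\Bl_{a,b}$, or else require $\mu$ to be chosen closer to a wall so that $P_\mu$ itself already carries additional Weyl translates $M_{w\mu}$ with $w\mu \neq \mu$, which then contribute the extra Vermas after tensoring and projecting.

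After constructing $P_\mu \otimes W$ and projecting onto $\Bl_{a,b}$, lower bounds on the Verma multiplicities claimed in the theorem are obtained by producing, for each Verma module $M_\nu$ appearing on the right-hand side of the formula, a chain of reflections and isotropic roots matching one of the six criteria of \cref{filprop}. Part~(3) of that proposition handles the short odd shift $M_{a,b,c+1|c+1}$ via the isotropic root $\delta_3 - \epsilon$; parts~(1)--(2) handle pure Weyl-reflection Vermas such as $M_{c,a,b|c}$ in subcase (3.3.1); and parts~(4) and (6) handle the mixed reflection-plus-odd-shift contributions such as $M_{c+1,a,b|c+1}$.

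The main obstacle is the matching upper bound: showing that $\mathrm{Pr}_{\lambda}(P_\mu \otimes W)$ admits no indecomposable projective summand other than $P_\lambda$. This relies on \cref{corlen}, which forces every atypical indecomposable projective $P_{\nu}$ to carry at least two Verma modules in its standard filtration. A careful tally of the total Verma count in the projection, combined with the Bruhat constraint that the highest weight of every summand lies at or above $\lambda$, restricts which combinations of $P_{\nu}$ can occur and in each case pins down $P_\lambda$ as the entire projection. The most delicate instances are the nested boundary subcases (2.2) and (3.3.1), where several isotropic roots and several Weyl reflections both contribute, so that the naive projection may contain more Vermas than $P_\lambda$ should; there one either iterates the translation-functor argument with a second choice $(\mu', W')$, or appeals inductively to a previously established subcase with larger coordinate gaps to isolate the correct multiplicities.
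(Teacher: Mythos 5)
Your proposal follows essentially the same translation-functor strategy as the paper: choose a typical $\mu$ so that $P_\mu$ is known by Lemma~\ref{typAndDom}, tensor with a small finite-dimensional module, project onto $\Bl_{a,b}$, establish lower bounds on Verma multiplicities via Proposition~\ref{filprop}, and rule out extra summands via Corollary~\ref{corlen}. One small omission: your list of candidate modules $W$ leaves out $\extp^3 V$, which the paper does need for the boundary subcase $a=c+2$ (here the nearest typical $\mu$ is $\lambda-3\epsilon$, and $3\epsilon$ is a weight of $\extp^3 V$ but not of $V$, $V^*$, $\extp^2 V$, or $\extp^2 V^*$); adding $\extp^3 V$ to the toolbox closes this gap and the rest of the argument goes through as you describe.
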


\begin{thm}\label{gl312}
Let $a, b, c \in \mathbb{Z}$ with $a \geq b$. The projective objects $P_{b, a, c| c}$ in $\Bl_{a,b}$ have the following Verma flag formulae.
\begin{enumerate}[label=(\arabic*), ref=(\arabic*]
	\item Suppose $b > c$.
	\begin{enumerate}[label=\theenumi.\arabic*), ref = \arabic*]
		\item If $b > c+1$ and $a > b$, then
        \begin{equation*}
        \begin{aligned}            
        P_{b, a, c| c} = M_{b,a,c|c} + M_{b,a,c+1|c+1} + M_{a,b,c|c} + M_{a,b,c+1|c+1}.
        \end{aligned}
        \end{equation*}		
        
		\item Suppose $b = c+1$.
		\begin{enumerate}[label=\theenumi.\theenumii.\arabic*)]
			\item If $a > c+2$, then
	        \begin{equation*}
	        \begin{aligned}
	        P_{c+1,a,c|c} = M_{c+1,a,c|c} + M_{c+2,a,c+1|c+2} + M_{c+1,a,c+1|c+1}\\ 
	        + M_{a,c+1,c|c} + M_{a,c+2,c+1|c+2}+ M_{a,c+1,c+1|c+1}.            
	        \end{aligned}
	        \end{equation*}				
			\item If $a = c+2$, then
	        \begin{equation*}
	        \begin{aligned}            
	        P_{c+1 ,c+2,c|c} = M_{c+1,c+2,c|c} + M_{c+2,c+1,c|c}  + M_{c+1,c+2,c+1|c+1}  \\
	        + M_{c+2,c+1,c+1|c+1} + M_{c+2,c+2,c+1|c+2}.
	        \end{aligned}
	        \end{equation*}						
		\end{enumerate}
	\end{enumerate}
	\item Suppose $b = c$.
	\begin{enumerate}[label=\theenumi.\arabic*), ref = \arabic*]
		\item If $a > c+1$, then
	        \begin{equation*}
	        \begin{aligned}            
	        P_{c,a,c|c} =M_{c,a,c|c} + M_{c,a,c+1|c+1} + M_{c+1,a,c|c+1} \\ + M_{a,c,c|c} + M_{a,c,c+1|c+1} + M_{a,c+1,c|c+1}.
	        \end{aligned}	
	        \end{equation*}	        
		\item If $a = c+1$, then
	        \begin{equation*}
	        \begin{aligned}            
	        P_{c,c+1,c|c} = M_{c,c+1,c|c} + M_{c,c+2,c+1|c+2} + M_{c+1,c+2,c|c+2}  + M_{c,c+1,c+1|c+1}\\ + 2M_{c+1,c+1,c|c+1} + M_{c+2,c,c+1|c+2} + M_{c+2,c+1,c|c+2} \\ + M_{c+1,c,c+1|c+1} + M_{c+1,c,c|c}.
	        \end{aligned}
	        \end{equation*}		        	
	\end{enumerate}	
	\item Suppose $b < c$.
	\begin{enumerate}[label=\theenumi.\arabic*), ref = \arabic*]
		\item Suppose $a > c$.
		\begin{enumerate}[label=\theenumi.\theenumii.\arabic*)]
			\item If $a > c+1$, then
	        \begin{equation*}
	        \begin{aligned}            
	        P_{b,a,c|c} = M_{b,a,c|c} + M_{a,b,c|c} + M_{a,c,b|c} + M_{c,a,b|c} + M_{b,a,c+1|c+1}\\ + M_{a,b,c+1|c+1} + M_{a,c+1,b|c+1} + M_{c+1,a,b|c+1}.
	        \end{aligned}
	        \end{equation*}							
			\item If $a = c+1$, then
	        \begin{equation*}
	        \begin{aligned}            
	        P_{b,c+1,c|c} = M_{b,c+1,c|c} + M_{b,c+2,c+1|c+2} + M_{b,c+1,c+1|c+1} \\
            + M_{c+1,b,c|c} + M_{c+2,b,c+1|c+2} + M_{c+1,b,c+1|c+1} \\
            + M_{c,c+1,b|c} + M_{c+1,c+2,b|c+2} + M_{c+1,c+1,b|c+1} \\
            + M_{c+1,c,b|c} + M_{c+2,c+1,b|c+2} + M_{c+1,c+1,b|c+1}.
	        \end{aligned}
	        \end{equation*}				
		\end{enumerate}
		\item If $a = c$, then	
	        \begin{equation*}
	        \begin{aligned}            
	        P_{b,c,c|c} =  M_{b,c,c|c} + M_{b,c,c+1|c+1} + M_{b,c+1,c|c+1} \\
            + M_{c,b,c|c} + M_{c,b,c+1|c+1} + M_{c+1,b,c|c+1} \\
            + M_{c,c,b|c} + M_{c,c+1,b|c+1} + M_{c+1,c,b|c+1}.
	        \end{aligned}
	        \end{equation*}			
		\item Suppose $a < c$ and $a > b$.
	        \begin{equation*}
	        \begin{aligned}            
	        P_{b,a,c|c} = M_{b,a,c|c} + M_{b,a,c+1|c+1} + M_{b,c,a|c} \\
            + M_{b,c+1,a|c+1} + M_{a,b,c|c} + M_{a,b,c+1|c+1} \\
            + M_{c,b,a|c} + M_{c+1,b,a|c+1} + M_{a,c,b|c} \\
            + M_{a,c+1,b|c+1} + M_{c,a,b|c} + M_{c+1,a,b|c+1}.
	        \end{aligned}
	        \end{equation*}		
	\end{enumerate}		
\end{enumerate}
\end{thm}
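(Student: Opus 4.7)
The plan is to prove Theorem \ref{gl312} case by case via the translation functor strategy of \cref{strategy}, using Theorem \ref{gl311} as a companion reference. Because $(b,a,c|c)$ has $b < a$ in its first two positions, $L_{b,a,c|c}$ sits strictly below $L_{a,b,c|c}$ in the Bruhat order on $\Bl_{a,b}$, and the claimed formulas reflect this numerically: in the generic case (1.1), the Verma flag of $P_{b,a,c|c}$ is exactly the flag of $P_{a,b,c|c}$ from Theorem \ref{gl311} augmented by its $s_{\delta_1-\delta_2}$-images, giving four Vermas instead of two.

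The core engine in each case runs as follows. Given target $\lambda = (b,a,c|c)$, first choose a typical neighbor $\mu$---most often $\mu = (b,a,c|c+1)$, which is typical when $c+1 \notin \{a,b\}$---and read off $P_\mu$ from Lemma \ref{typAndDom}: in the generic situation only the reflection $s_{\delta_1-\delta_2}$ raises $\mu$, so $P_\mu = M_\mu + M_{s_{\delta_1-\delta_2}\mu}$. Next tensor with the natural module $V$ (whose weights are $\delta_1, \delta_2, \delta_3, \epsilon_1$), apply Proposition \ref{sum} to enumerate the eight Vermas in $P_\mu \otimes V$, and project onto $\Bl_{a,b}$ by discarding every Verma whose highest weight is not atypical in the correct linkage class. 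In case (1.1) the four survivors are precisely those claimed, with $(b,a,c|c)$ lowest in the Bruhat order among them, so $P_\lambda$ is a direct summand of the projection. Proposition \ref{filprop}(1), (3), (4), applied with the isotropic root $\beta = \delta_3 - \epsilon_1$ and the reflection $s_{\delta_1-\delta_2}$, then forces each of the four Vermas into $P_\lambda$ with multiplicity at least one, pinning equality with the projection.

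For the boundary cases the same skeleton applies with modifications: when the default $\mu$ is itself atypical (for instance case (2.2) with $a=c+1$, $b=c$, or case (1.2)), one shifts $\mu$ by a different weight of $V$ or instead tensors with $V^*$, $\Lambda^2 V$, or $\Lambda^3 V$, whose richer weight systems accommodate the coincidences. Indecomposability is checked throughout by pairing the upper bounds from the tensor-product count with the lower bounds from Proposition \ref{filprop}, and by invoking Corollary \ref{corlen} to rule out spurious atypical summands whose forced extra Vermas would overflow the projection. The principal obstacle is case (2.2), where $M_{c+1,c+1,c|c+1}$ appears with multiplicity $2$: a single translation from a typical weight normally produces each target Verma at most once, so the doubled multiplicity must be traced to two independent sources---either two distinct weight components of the starting projective both landing on the same Verma, or a combination of starting from an already-determined projective from Theorem \ref{gl311}, subtracting the known summands, and comparing. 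A secondary difficulty is the long cases (3.1.2) and (3.3), where up to twelve Vermas appear and several Weyl reflections simultaneously satisfy the hypotheses of Proposition \ref{filprop}; careful combinatorial bookkeeping is then needed to confirm the projection is genuinely indecomposable, using the dihedral structure of $\W \cong S_3 \times S_1$ invoked in Lemma \ref{typAndDom}.
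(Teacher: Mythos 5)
Your proposal correctly describes the general translation-functor skeleton that the paper uses—choosing a typical $\mu$, reading off $P_\mu$ from Lemma \ref{typAndDom}, tensoring with $V$, $V^*$, $\Lambda^2 V$, etc., projecting via Proposition \ref{sum}, and then forcing Vermas with Proposition \ref{filprop} and Corollary \ref{corlen}—and your treatment of the generic case (1.1) ($\mu = (b,a,c|c+1)$, $P_\mu = M_\mu + M_{s_{\delta_1-\delta_2}\mu}$, tensor with $V$, four survivors, then conditions (1), (3), (4) of Proposition \ref{filprop}) is exactly the paper's computation.

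However, there is a genuine gap at case (1.2.2), $b = c+1$, $a = c+2$. The mechanism you rely on throughout---\emph{pairing upper bounds from the tensor-product count with lower bounds from Proposition \ref{filprop} and invoking Corollary \ref{corlen} to rule out spurious summands}---fails there, and the paper explicitly says so. After tensoring $P_\mu = M_{c+1,c+2,c|c+3} + M_{c+2,c+1,c|c+3}$ with $\Lambda^3 V$ and projecting, one obtains eight Vermas (with $M_{c+2,c+2,c+1|c+2}$ appearing twice); Proposition \ref{filprop} only forces five of them into $P_\lambda$, and the remaining three, namely one copy of $M_{c+2,c+2,c+1|c+2}$ plus $M_{c+2,c+3,c+1|c+3}$ and $M_{c+3,c+2,c+1|c+3}$, are \emph{exactly} a Verma flag for $P_{c+2,c+2,c+1|c+2}$ (cf.\ Theorem \ref{gl313}). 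Corollary \ref{corlen} cannot distinguish $P_\lambda$ from $P_\lambda \oplus P_{c+2,c+2,c+1|c+2}$, so indecomposability is not pinned by your method. The paper resolves this with a second, independent projection: it takes the already-determined \emph{atypical} projective $P_\theta$ with $\theta = \lambda - (\delta_2 + \epsilon) \cong (c+1,c+1,c\,|\,c+1)$ from Theorem \ref{gl313}, tensors with $\Lambda^2 V$, and finds the projection consists of the same five Vermas each with multiplicity exactly two, forcing $(P_\lambda : -)$ to be at most $5$ and hence exactly those five. Your proposal does contain the germ of this idea—the parenthetical about ``starting from an already-determined projective \ldots subtracting the known summands, and comparing''—but you attach it to case (2.2), where the direct translation from the typical $\mu = (c,c+1,c\,|\,c+2)$ in fact produces the multiplicity-$2$ term automatically (two distinct Vermas of $P_\mu$ both map to $M_{c+1,c+1,c|c+1}$), rather than to (1.2.2), where the two-projection comparison is genuinely needed. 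As written, the proposal has no working argument for (1.2.2).

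Two smaller points: the paper's displayed tensor factor for case (2.2) is $\Lambda^3 V^*$, but the weights that actually produce the listed projection (including $M_{c,c+1,c|c}$ two $\epsilon$-steps below $\mu$) are those of $\Lambda^2 V$; and Lemma \ref{typAndDom} gives the flag of $P_\mu$ as \emph{all} $M_{w\mu}$ with $w\mu \geq \mu$ in the Bruhat order, not only the single reflection $s_{\delta_1-\delta_2}$—in the generic (1.1) case those happen to coincide, but in other cases (e.g.\ the antidominant typical $\mu$'s used in (3.1.1), (3.3)) there are up to six Vermas in $P_\mu$, so the blanket statement about ``only $s_{\delta_1-\delta_2}$'' should be read as specific to (1.1).
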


\begin{thm}\label{gl313}
Let $a, b, c \in \mathbb{Z}$ with $a \geq b$. The projective objects $P_{a, c, b| c}$ in $\Bl_{a,b}$ have the following Verma flag formulae.
\begin{enumerate}[label=(\arabic*), ref=(\arabic*]
	\item Suppose $b > c$.
	\begin{enumerate}[label=\theenumi.\arabic*), ref = \arabic*]
		\item If $b > c+1$, then
        \begin{equation*}
        \begin{aligned}            
        P_{a, c, b| c} = M_{a,c,b|c} + M_{a,c+1,b|c+1} + M_{a,b,c|c} + M_{a,b,c+1|c+1}.
        \end{aligned}
        \end{equation*}		
        
		\item Suppose $b = c+1$.
		\begin{enumerate}[label=\theenumi.\theenumii.\arabic*)]
			\item If $a > c+1$, then
	        \begin{equation*}
	        \begin{aligned}
	        P_{a,c,c+1|c} = M_{a,c,c+1|c} + M_{a,c+1,c+1|c+1} + M_{a,c+1,c|c}.           
	        \end{aligned}
	        \end{equation*}			
			\item If $a = c+1$, then
	        \begin{equation*}
	        \begin{aligned}            
	        P_{c+1 ,c,c+1|c} = M_{c+1,c,c+1|c} + M_{c+1,c+1,c+1|c+1} + \\M_{c+2,c+1,c+1|c+2} + M_{c+1,c+1,c|c}.
	        \end{aligned}
	        \end{equation*}		        						
		\end{enumerate}
	\end{enumerate}
	\item Suppose $b < c$.
	\begin{enumerate}[label=\theenumi.\arabic*), ref = \arabic*]
		\item Suppose $a > c$.
		\begin{enumerate}[label=\theenumi.\theenumii.\arabic*)]
			\item If $a > c+1$, then
	        \begin{equation*}
	        \begin{aligned}            
	        P_{a,c,b|c} = M_{a,c,b|c} + M_{a,c+1,b|c+1}.
	        \end{aligned}
	        \end{equation*}							
			\item If $a = c+1$, then
	        \begin{equation*}
	        \begin{aligned}            
	        P_{c+1,c,b|c} = M_{c+1,c,b|c} + M_{c+1,c+1,b|c+1} + M_{c+2,c+1,b|c+2}.
	        \end{aligned}
	        \end{equation*}				
		\end{enumerate}
		\item If $a = c$, then	\label{help0}
	        \begin{equation*}
	        \begin{aligned}            
	        P_{c,c,b|c} =  M_{c,c,b|c} + M_{c,c+1,b|c+1} + M_{c+1,c,b|c+1}.
	        \end{aligned}
	        \end{equation*}			
		\item If $a < c$, then
	        \begin{equation*}
	        \begin{aligned}            
	        P_{a,c,b|c} = M_{a,c,b|c} + M_{a,c+1,b|c+1} + M_{c,a,b|c} + M_{c+1,a,b|c+1}.
	        \end{aligned}
	        \end{equation*}		
	\end{enumerate}		
\end{enumerate}
\end{thm}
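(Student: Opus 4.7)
The plan is to apply the translation-functor strategy of \cref{strategy} subcase by subcase. For each $\lambda \cong (a, c, b \mid c)$ in the atypical block $\Bl_{a,b}$ I would select a weight $\mu$ such that $\mu - \rho$ is typical and a finite-dimensional $\g$-module $W$ whose weight set contains $\nu = \lambda - \mu$. Lemma \ref{typAndDom} then describes $P_\mu$ as the sum of Vermas $M_{w\mu}$ over the $w \in \W$ satisfying $w\mu \geq \mu$, which for typical $\mu$ reduces to enumerating the permutations of the bosonic coordinates whose difference from $\mu$ lies in $\Z_{\geq 0}\Pi$. Proposition \ref{sum} writes down the full Verma flag of $P_\mu \otimes W$, and projecting onto $\Bl_{a,b}$ retains only those summands linked to $\lambda$, which can be read off the abbreviated tuple using the block description in \cref{examples}.

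Once $\mu$ and $W$ have been chosen so that $\lambda$ is the Bruhat-minimum of the projected filtration, $P_\lambda$ must appear as a direct summand. To pin down its Verma flag I would verify term by term that each Verma in the projection is forced into $P_\lambda$ by Proposition \ref{filprop}: part (1) or (2) catches $M_{s_{\delta_i - \delta_j}\lambda}$ whenever $\langle\lambda, (\delta_i-\delta_j)^\vee\rangle < 0$, part (3) catches $M_{\lambda + \beta}$ whenever $\beta$ is a positive isotropic root with $(\lambda, \beta) = 0$, and parts (4)--(5) combine the two. Matching the Verma count on both sides then forces $P_\lambda$ to coincide with the projection.

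For instance, in the baseline case (1.1) with $a \geq b > c+1$, one takes $\mu = (a, c, b \mid c+1)$, which is typical since $a > b > c+1 > c$, paired with $W = V$ the natural representation. Lemma \ref{typAndDom} gives $P_\mu = M_{a,c,b \mid c+1} + M_{a,b,c \mid c+1}$, and the block projection of $P_\mu \otimes V$ yields exactly the four claimed Vermas: the orthogonality $(\lambda, \delta_2 - \epsilon_1) = 0$ forces $M_{a,c+1,b \mid c+1}$ via Proposition \ref{filprop}(3), $\langle \lambda, (\delta_2-\delta_3)^\vee\rangle = c-b < 0$ forces $M_{a,b,c \mid c}$ via (1), and the combined negativity $\langle \lambda + \delta_2 - \epsilon_1, (\delta_2-\delta_3)^\vee\rangle = c+1-b < 0$ forces $M_{a,b,c+1 \mid c+1}$ via (4). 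Subcase (1.2.1) with $a > c+1$ is analogous but uses $W = V^*$ with $\mu = (a, c+1, c+1 \mid c)$ (obtained by taking the non-lowest weight $-\delta_2$), and subcase (2.1.2) uses $W = \extp^2 V$ with $\mu = \lambda - 2\epsilon_1$; the remaining subcases (2.1.1), (2.2), (2.3) all admit clean treatments with $W \in \{V, V^*, \extp^2 V\}$ along the same template.

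The main obstacle will be the boundary subcase (1.2.2), where $a = b = c+1$ and $\lambda = (c+1, c, c+1 \mid c)$ has a repeated bosonic entry that obstructs every naive choice of $\mu$. In this case I expect to take $\mu = \lambda - 2\epsilon_1$ with $W = \extp^2 V$, producing a projection onto $\Bl_{c+1, c+1}$ in which $\lambda$ is still Bruhat-minimal but the projection contains a second indecomposable projective summand in addition to $P_\lambda$. Isolating $P_\lambda$ here requires invoking Proposition \ref{filprop}(5) (applied with the isotropic pair $\beta = \delta_2 - \epsilon_1$ and $\gamma = \delta_1 - \epsilon_1$, for which $(\lambda, \beta) = (\lambda + \beta, \gamma) = 0$ and $\mathrm{ht}(\beta) < \mathrm{ht}(\gamma)$) to force the summand $M_{c+2, c+1, c+1 \mid c+2}$, together with Corollary \ref{corlen} and careful multiplicity bookkeeping to identify which four Verma modules in the projection belong to $P_\lambda$ and which belong to the second summand.
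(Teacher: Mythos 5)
Your overall strategy matches the paper's: pick a typical $\mu$ close to $\lambda$, read off $P_\mu$ from Lemma~\ref{typAndDom}, tensor with a small representation, project to the block $\Bl_{a,b}$ via Proposition~\ref{sum}, and then force the Verma summands of $P_\lambda$ by Proposition~\ref{filprop} together with Corollary~\ref{corlen}. Your treatment of subcase (1.1) is identical to the paper's (same $\mu$, same $W = V$, same three applications of \ref{filprop}). For (1.2.1) you choose $\mu = (a, c+1, c+1 \mid c)$ with $W = V^*$, whereas the paper takes $\mu = (a+1, c+1, c+1 \mid c)$ with $W = \extp^2 V^*$; both produce exactly the same three-term projection, which \ref{filprop}(1) and (3) immediately identify with $P_\lambda$, so this difference is cosmetic. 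Your choices for (2.1.1), (2.1.2), (2.2), and (2.3) agree with the paper's.

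The genuine deviation is in (1.2.2), and here your framing is partly mistaken. You assert that the repeated bosonic entry of $\lambda = (c+1, c, c+1 \mid c)$ obstructs every naive choice of $\mu$, but that is not so: shifting by the \emph{lowest} weight of $\extp^2 V^*$ gives $\mu = \lambda + \delta_1 + \delta_2 = (c+2, c+1, c+1 \mid c)$, which is typical, and this is the choice the paper makes. With that $\mu$, $P_\mu = M_{c+2,c+1,c+1\mid c}$ is a single Verma and the projection onto $\Bl_{c+1,c+1}$ is exactly the four claimed terms $M_{c+1,c,c+1\mid c} + M_{c+1,c+1,c\mid c} + M_{c+1,c+1,c+1\mid c+1} + M_{c+2,c+1,c+1\mid c+2}$; one forces all four via \ref{filprop}(1), (3), and (5) (your isotropic pair $\beta = \delta_2 - \epsilon$, $\gamma = \delta_1 - \epsilon$ is right for the last), and there is nothing left to split off. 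Your alternative $\mu = \lambda - 2\epsilon$ with $W = \extp^2 V$ instead yields $P_\mu = M_{c+1,c,c+1\mid c+2} + M_{c+1,c+1,c\mid c+2}$ and an \emph{eight}-term projection containing a second copy each of $M_{c+1,c+1,c+1\mid c+1}$ and $M_{c+2,c+1,c+1\mid c+2}$ plus $M_{c+1,c+1,c+2\mid c+2}$ and $M_{c+1,c+2,c+1\mid c+2}$. Forcing the four correct Vermas into $P_\lambda$ is fine, but to conclude that $P_\lambda$ has \emph{only} those four, you must identify the remaining four as a separate indecomposable projective. The lowest remaining weight is $(c+1,c+1,c+1\mid c+1)$, and the argument goes through only if you already know (from Theorem~\ref{gl311}, case $a=b=c$, applied at $\hat c = c+1$) that $P_{c+1,c+1,c+1\mid c+1}$ has exactly four Verma factors matching the leftover; Corollary~\ref{corlen} alone does not settle this, since length-$\geq 2$ indecomposables could partition four leftover terms in several ways. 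So your route for (1.2.2) is viable but relies on a cross-reference you haven't spelled out, and your stated justification (``no naive $\mu$ works'') is incorrect; adopting the paper's $\extp^2 V^*$ shift avoids the issue entirely.
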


\begin{thm}\label{gl314}
Let $a, b, c \in \mathbb{Z}$ with $a \geq b$. The projective objects $P_{b, c, a| c}$ in $\Bl_{a,b}$ have the following Verma flag formulae.
\begin{enumerate}[label=(\arabic*), ref=(\arabic*]
	\item Suppose $b > c$.
	\begin{enumerate}[label=\theenumi.\arabic*), ref = \arabic*]
		\item If $b > c+1$ and $a > b$, then
        \begin{equation*}
        \begin{aligned}            
            P_{b, c, a| c} =   M_{b,c,a|c} + M_{b,a,c|c} + M_{a,b,c|c} + M_{a,c,b|c}
            + M_{b,c+1,a|c+1}\\ + M_{b,a,c+1|c+1} + M_{a,b,c+1|c+1} + M_{a,c+1,b|c+1}.
        \end{aligned}
        \end{equation*}		
        
		\item Suppose $b = c+1$.
		\begin{enumerate}[label=\theenumi.\theenumii.\arabic*)]
			\item If $a > c+2$, then
	        \begin{equation*}
	        \begin{aligned}
	        P_{c+1,c,a|c} =    M_{c+1,c,a|c} + M_{c+2,c+1,a|c+2} + M_{c+1,c+1,a|c+1} \\
            + M_{a,c,c+1|c} + M_{a,c+1,c+2|c+2} + M_{a,c+1,c+1|c+1} \\
            + M_{c+1,a,c|c} + M_{c+1,a,c+2|c+2} + M_{c+1,a,c+1|c+1} \\
            + M_{a,c+1,c|c} + M_{a,c+2,c+1|c+2} + M_{a,c+1,c+1|c+1}. \\        
	        \end{aligned}
	        \end{equation*}				
			\item If $a = c+2$, then
	        \begin{equation*}
	        \begin{aligned}            
	        P_{c+1 ,c,c+2|c} =  M_{c+1,c,c+2|c} + M_{c+2,c,c+1|c} + M_{c+1,c+1,c+2|c+1}\\ + M_{c+2,c+1,c+1|c+1}
            + M_{c+2,c+1,c+2|c+2} + M_{c+2,c+1,c|c}\\ + M_{c+1,c+2,c|c} + M_{c+1,c+2,c+1|c+1}
            + M_{c+2,c+1,c+1|c+1}\\ + M_{c+2,c+2,c+1|c+2}.
	        \end{aligned}
	        \end{equation*}						
		\end{enumerate}
	\end{enumerate}
	\item Suppose $b = c$.
	\begin{enumerate}[label=\theenumi.\arabic*), ref = \arabic*]
		\item If $a > c+1$, then
	        \begin{equation*}
	        \begin{aligned}            
	        P_{c,c,a|c} = M_{c,c,a|c} + M_{c+1,c,a|c+1} + M_{c,c+1,a|c+1} \\
            + M_{c,a,c|c} + M_{c+1,a,c|c+1} + M_{c,a,c+1|c+1} \\
            + M_{a,c,c|c} + M_{a,c+1,c|c+1} + M_{a,c,c+1|c+1}.
	        \end{aligned}	
	        \end{equation*}	        
		\item If $a = c+1$, then
	        \begin{equation*}
	        \begin{aligned}            
	        P_{c,c,c+1|c} =  M_{c,c,c+1|c} + M_{c+1,c,c|c} + M_{c,c+1,c|c} \\
            + M_{c,c+1,c+1|c+1} + M_{c+1,c,c+1|c+1} + M_{c+1,c+1,c|c+1}.          
	        \end{aligned}
	        \end{equation*}		        	
	\end{enumerate}		
	\item Suppose $b < c$.
	\begin{enumerate}[label=\theenumi.\arabic*), ref = \arabic*]
			\item If $a > c+1$, then
	        \begin{equation*}
	        \begin{aligned}            
	        P_{b,c,a|c} = M_{b,c,a|c} + M_{b,c+1,a|c+1} + M_{c,b,a|c} + M_{c+1,b,a|c+1}\\ + M_{a,b,c|c} + M_{a,b,c+1|c+1}
	        			+ M_{a,c,b|c} + M_{a,c+1,b|c+1}\\ + M_{b,a,c|c} + M_{b,a,c+1|c+1} + M_{c,a,b|c} + M_{c+1,a,b|c+1}.
	        \end{aligned}
	        \end{equation*}							
			\item Suppose $a = c+1$.
			\begin{enumerate}[label=\theenumi.\theenumii.\arabic*)]
				\item If $b = c-1$, then
			        \begin{equation*}
			        \begin{aligned}            
			        P_{c-1,c,c+1|c} =  M_{c-1,c,c+1|c} + M_{c-1,c+1,c|c} + M_{c-1,c+1,c+1|c+1} \\ 
                 + M_{c,c-1,c+1|c} + M_{c+1,c-1,c|c} + M_{c+1,c-1,c+1|c+1} \\
                 + M_{c,c+1,c-1|c} + M_{c+1,c,c-1|c} + M_{c+1,c+1,c-1|c+1}.
			        \end{aligned}
			        \end{equation*}				
				\item If $b < c-1$, then
	        		\begin{equation*}
			        \begin{aligned}            
			        P_{b,c,c+1|c} =  M_{b,c,c+1|c} + M_{b,c+1,c|c} + M_{c,b,c+1|c} \\
                 + M_{c,c+1,b|c} + M_{c+1,b,c|c} + M_{c+1,b,c|c}.
                	 \end{aligned}
			        \end{equation*}					
			\end{enumerate}					
		\item Suppose $a < c$ and $a > b$.
	        \begin{equation*}
	        \begin{aligned}            
	        P_{b,c,a|c} = M_{b,c,a|c} + M_{b,c+1,a|c+1} + M_{c,b,a|c} + M_{c+1,b,a|c+1}\\
	                    + M_{a,c,b|c} + M_{a,c+1,b|c+1} + M_{c,a,b|c} + M_{c+1,a,b|c+1}.
	        \end{aligned}
	        \end{equation*}		
	\end{enumerate}		
\end{enumerate}
\end{thm}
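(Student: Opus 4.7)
The plan is to establish each case of Theorem \ref{gl314} by the translation functor strategy of \cref{strategy}, applied to the atypical weight $\lambda \cong (b, c, a\ |\ c)$. Since the middle even coordinate coincides with the odd coordinate, the unique positive isotropic root with $(\lambda, \beta) = 0$ is $\beta = \delta_2 - \epsilon$. For each subcase I would select a nearby typical weight $\mu$, generically by setting $\mu = \lambda - \nu_0$ where $\nu_0$ is the lowest weight of one of $V$, $V^*$, $\extp^2 V$, $\extp^2 V^*$, or $\extp^3 V^*$, so that $P_\mu$ is known by Lemma \ref{typAndDom}, namely $P_\mu = \sum_{w\mu \geq \mu,\, w \in \W} M_{w\mu}$. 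Proposition \ref{sum} then yields an explicit standard filtration of $P_\mu \otimes W$, and projecting onto the block $\Bl_{a,b}$ via the linkage description of \cref{link} produces $\mathrm{Pr}_\lambda(P_\mu \otimes W)$. The choice of $\mu$ and $W$ is made precisely so that $\lambda$ is the lowest weight among those appearing in this projection, which forces $P_\lambda$ to be a direct summand.

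Next, Proposition \ref{filprop} is used to extract the lower bound $(P_\lambda : M_\nu) \geq 1$ for each Verma module $M_\nu$ listed in the theorem. Parts \ref{filprop}[\ref{3}] and \ref{filprop}[\ref{4}] produce the Verma modules of the form $M_{w(\lambda + \beta)}$, which account for the summands containing a $c+1$ in some coordinate, while parts \ref{filprop}[\ref{1}] and \ref{filprop}[\ref{2}] handle purely even Weyl reflections of $\lambda$ itself. In the generic subcases, namely (1.1), (3.1), and (3.3), where the coordinates $a$, $b$, $c$ are well-separated, the number of Verma modules produced by Proposition \ref{filprop} exactly matches the length of $\mathrm{Pr}_\lambda(P_\mu \otimes W)$, and hence $P_\lambda = \mathrm{Pr}_\lambda(P_\mu \otimes W)$ with no room for further summands. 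In the boundary subcases (1.2), (2), and (3.2), coordinates collide and one must take $W$ to be larger, for instance $\extp^2 V$ or $\extp^3 V^*$, and chain reflections through walls by iterated application of \ref{filprop}[\ref{2}] and \ref{filprop}[\ref{4}] to exhaust all listed terms.

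The main obstacle is the indecomposability verification in the collision cases, where $\mathrm{Pr}_\lambda(P_\mu \otimes W)$ is a priori longer than the claimed Verma flag of $P_\lambda$ and additional direct summands must be ruled out. Any such summand would be an indecomposable projective $P_\nu$ with $\nu \geq \lambda$ in the Bruhat order, $\nu \in \Bl_{a,b}$, and by Corollary \ref{corlen} of Verma flag length at least $2$. We eliminate each candidate $\nu$ by combining arithmetic on Verma multiplicities with appeals to the earlier Theorems \ref{gl311}, \ref{gl312}, \ref{gl313}, whose proofs would precede this one: once $P_\nu$'s Verma flag is pinned down, we subtract it from $\mathrm{Pr}_\lambda(P_\mu \otimes W)$ and verify that what remains matches the claimed formula for $P_\lambda$. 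The most delicate instance is case (3.2.1), with $b = c-1$ and $a = c+1$, where many adjacent coordinates conspire to produce a large block projection; here one must invoke Proposition \ref{filprop}[\ref{6}] with both isotropic roots and several even reflections, and then carefully match the nine resulting Verma modules against the projection to confirm that no ancillary $P_\nu$ splits off.
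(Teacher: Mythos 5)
Your overall framework is correct and mirrors the paper's: choose a typical $\mu$ with $P_\mu$ given by Lemma \ref{typAndDom}, tensor with a small representation, project to the block by linkage, and use Proposition \ref{filprop} together with Corollary \ref{corlen} to account for the terms. However, the details you sketch for the boundary cases diverge from what actually happens, and in one spot the argument you propose would fail.

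First, you propose a ``subtraction'' step for the collision subcases (1.2), (2), and (3.2), in which extra direct summands $P_\nu$ are hypothesized, pinned down via Theorems~\ref{gl311}--\ref{gl313}, and removed from $\mathrm{Pr}_\lambda(P_\mu\otimes W)$. That technique is not needed anywhere in Theorem~\ref{gl314}. In every subcase the paper chooses $\mu$ and $W$ (making judicious use of the duals $V^*$ and $\extp^2 V^*$ precisely in the collision cases) so that the projection already has exactly the claimed length. Once that choice is made, Proposition~\ref{filprop} accounts for all but at most one or two terms, and Corollary~\ref{corlen} closes the gap. The subtraction-by-comparison argument you describe is used only once in the paper, in the proof of Theorem~\ref{gl312} case $b=c+1$, $a=c+2$, and there it is resolved by switching to a different atypical $\theta$, not by invoking the earlier theorems.

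Second, your claim that Proposition~\ref{filprop}[\ref{6}] ``with both isotropic roots'' is what handles the most delicate case (3.2.1), with $a=c+1$ and $b=c-1$, is incorrect. There $\lambda\cong(c-1,c,c+1\,|\,c)$ and $\beta=\delta_2-\epsilon$, and all nine Verma modules in the claimed flag carry last coordinate $c$ or $c+1$; i.e., they are all of the form $M_{w\lambda}$ or $M_{w(\lambda+\beta)}$, which parts [\ref{1}]--[\ref{4}] already produce. Moreover parts [\ref{5}] and [\ref{6}] cannot even be applied: with $\beta=\delta_2-\epsilon$ of height $2$, the candidates $\gamma$ satisfying $(\lambda+\beta,\gamma)=0$ are $\delta_2-\epsilon$ and $\delta_3-\epsilon$, of heights $2$ and $1$ respectively, so $\mathrm{ht}(\beta)<\mathrm{ht}(\gamma)$ fails; the only isotropic root of larger height, $\delta_1-\epsilon$, is not orthogonal to $\lambda+\beta$ since its first coordinate is $c-1\neq c+1$. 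The subcases where Proposition~\ref{filprop}[\ref{5}] and [\ref{6}] genuinely appear are (1.2.1) and (1.2.2), whose projections contain Verma modules at ``level'' $c+2$ (and, in (1.2.1), a module with multiplicity $2$); you should redirect your attention there, taking $\gamma=\delta_1-\epsilon$.

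So: right skeleton, but the roles of parts [\ref{5}] and [\ref{6}] of Proposition~\ref{filprop} and the need for a comparison/subtraction step are misallocated; to complete the proof you should instead match, case by case, the right translation datum $(\mu,W)$ so that the projection length equals the claimed flag length, and then verify the \ref{filprop}-conditions exactly for those terms.
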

\begin{thm}\label{gl315}
Let $a, b, c \in \mathbb{Z}$ with $a \geq b$. The projective objects $P_{c, a, b| c}$ in $\Bl_{a,b}$ have the following Verma flag formulae.
\begin{enumerate}[label=(\arabic*), ref=(\arabic*]
	\item Suppose $b > c$.
	\begin{enumerate}[label=\theenumi.\arabic*), ref = \arabic*]
		\item Suppose $b > c+1$.
		\begin{enumerate}[label=\theenumi.\theenumii.\arabic*)]
			\item If $a > b$, then
	        \begin{equation*}
	        \begin{aligned}            
                  P_{c,a,b|c} = M_{c,a,b|c} + M_{c+1,a,b|c+1} + M_{a,c,b|c} + M_{a,c+1,b|c+1} \\
                  M_{b,a,c|c} + M_{b,a,c+1|c+1} + M_{a,b,c|c} + M_{a,b,c+1|c+1}.
	        \end{aligned}
	        \end{equation*}			
			\item If $a = b$, then
	        \begin{equation*}
	        \begin{aligned}
	        P_{c,b,b|c} = M_{c,b,b|c} + M_{c+1,b,b|c+1} + M_{b,c,b|c} \\
                + M_{b,c+1,b|c+1} + M_{b,b,c|c} + M_{b,b,c+1|c+1}.
	        \end{aligned}                
	        \end{equation*}				
		\end{enumerate}	
        
		\item Suppose $b = c+1$.
		\begin{enumerate}[label=\theenumi.\theenumii.\arabic*)]
			\item If $a > c+1$, then
	        \begin{equation*}
	        \begin{aligned}            
                 P_{c,a,c+1|c} = M_{c,a,c+1|c} + M_{c+1,a,c|c} + M_{c+1,a,c+1|c+1} \\
                + M_{a,c,c+1|c} + M_{a,c+1,c|c} + M_{a,c+1,c+1|c+1}.
	        \end{aligned}
	        \end{equation*}			
			\item If $a = c+1$, then
	        \begin{equation*}
	        \begin{aligned}
	        P_{c,c+1,c+1|c} = M_{c,c+1,c+1|c} + M_{c+1,c,c+1|c}\\ + M_{c+1,c+1,c|c} + M_{c+1,c+1,c+1|c+1}.
	        \end{aligned}	        
	        \end{equation*}				
						
		\end{enumerate}	
	\end{enumerate}
	\item Suppose $b < c$.
	\begin{enumerate}[label=\theenumi.\arabic*), ref = \arabic*]
		\item Suppose $a > c$.
		\begin{enumerate}[label=\theenumi.\theenumii.\arabic*)]
			\item If $a > c+1$, then
	        \begin{equation*}
	        \begin{aligned}            
	        P_{c,a,b|c} = M_{c,a,b|c} + M_{c+1,a,b|c+1} + M_{a,c,b|c} + M_{a,c+1,b|c+1}.
	        \end{aligned}	        
	        \end{equation*}							
			\item If $a = c+1$, then
	        \begin{equation*}
	        \begin{aligned}            
	        P_{c,c+1,b|c} = M_{c,c+1,b|c} + M_{c+1,c+1,b|c+1}.
	        \end{aligned}
	        \end{equation*}				
		\end{enumerate}	
		\item If $a < c$, then
	        \begin{equation*}
	        \begin{aligned}            
	        P_{c,a,b|c} =M_{c,a,b|c} + M_{c+1,a,b|c+1}.
	        \end{aligned}
	        \end{equation*}				
			
		\end{enumerate}		
\end{enumerate}
\end{thm}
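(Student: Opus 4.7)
My plan is to verify each subcase of Theorem \ref{gl315} using the translation functor strategy of \cref{strategy}. For $\lambda = (c, a, b | c)$ in a given subcase, I would choose a typical weight $\mu$ so that Lemma \ref{typAndDom} determines the Verma flag of $P_\mu$, and a finite-dimensional representation $W$ (drawn from $V$, $V^*$, $\extp^k V$, or $\extp^k V^*$) with a weight $\nu$ satisfying $\mu + \nu = \lambda$. Applying Proposition \ref{sum} yields the Verma flag of $P_\mu \otimes W$, and projecting onto the block $\Bl_{a,b}$ produces a projective module whose Verma flag contains $M_\lambda$. Arguing that $\lambda$ is the lowest weight appearing in the projection identifies $P_\lambda$ as a direct summand.

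For the generic subcase (1.1.1) with $a > b > c+1$, the Verma flag of $P_\lambda$ has length eight, split into four pairs at the atypicality levels $c_0 = c$ and $c_0 = c+1$. The four Vermas at level $c_0 = c$ should be accessible by taking $\mu = (c, a, b | c+1)$, which is typical, and tensoring with $V^*$: shifting each of the four Weyl-orbit Vermas of $P_\mu$ by the weight $+\epsilon$ of $V^*$ produces a Verma in $\Bl_{a,b}$. The four Vermas at level $c_0 = c+1$ arise analogously from $\mu = (c-1, a, b | c)$ tensored with $V$. Recovering all eight Vermas through a single translation therefore requires a larger representation such as $V \otimes V^*$ or $\extp^2 V^*$, or alternatively a two-step translation argument combining the two approaches above. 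In the degenerate subcases (1.1.2), (1.2.1), (1.2.2), and those in case (2), where $P_\lambda$ has fewer Vermas due to coincidences among $a$, $b$, and $c$, a single translation with $V$ or $V^*$ generally suffices, with the reduction in the Weyl orbit of $\mu$ matching the reduction in the expected Verma content.

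The main obstacle is proving that the block projection equals precisely $P_\lambda$, with no additional indecomposable summands. My plan is to use Proposition \ref{filprop}(1)--(6) to independently derive lower bounds on the Verma multiplicities of $P_\lambda$: for each Verma $M_\nu$ listed in the theorem, construct an explicit chain of reflections $s_{\alpha_i}$ and isotropic shifts $\beta, \gamma$ witnessing $(P_\lambda : M_\nu) > 0$. Matching this lower bound against the upper bound from the projection forces the projection to equal $P_\lambda$, and Corollary \ref{corlen} rules out smaller indecomposable summands in borderline cases. Delicate edge cases where the naive choice of $\mu$ fails to be typical (for instance, $b = c-1$ in subcase (2.1.1)) will require a small perturbation of $\mu$ or a two-step translation argument; this bookkeeping across the dozen-plus subcases is the most labor-intensive part of the argument.
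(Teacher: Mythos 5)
Your high-level strategy — translate a typical projective, project onto $\Bl_{a,b}$, identify $P_\lambda$ as the lowest-weight summand, and use Proposition~\ref{filprop} and Corollary~\ref{corlen} to control multiplicities — is exactly the paper's approach. But there is a concrete error in your execution of the generic subcase (1.1.1) that propagates into a wrong conclusion.

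First, $+\epsilon$ is a weight of $V$, not of $V^*$ (the weights of $V^*$ are $\{-\delta_1,-\delta_2,-\delta_3,-\epsilon\}$). So if you start from $\mu \cong (c,a,b\,|\,c+1)$, you must tensor with $V$, not $V^*$. Second, and more substantively, you undercount what a single translation produces. Tensoring $P_\mu$ with $V$, the relevant shifts are \emph{both} $+\epsilon$ (lowering the odd entry from $c+1$ to $c$, landing at level $c$) \emph{and} $+\delta_i$ for the appropriate $i$ (raising a $c$ among the even entries to $c+1$, landing at level $c+1$). Concretely, $M_{c,a,b|c+1}\otimes V$ contributes both $M_{c,a,b|c}$ and $M_{c+1,a,b|c+1}$ to the block projection, and likewise for the other three Weyl-orbit Vermas; a single translation already yields all eight terms. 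The paper does precisely this, with the dual choice $\mu \cong (c+1,a,b\,|\,c)$ and $V^*$. Your claim that one needs $V\otimes V^*$, $\extp^2 V^*$, or a two-step translation is therefore unnecessary — and in fact neither of those representations applied to your $\mu$ lands in $\Bl_{a,b}$ at all.

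Your fallback choice $\mu \cong (c-1,a,b\,|\,c)$ tensored with $V$ is also problematic: the $+\epsilon$ shift there gives $(c-1,a,b\,|\,c-1)$, which lies at atypicality level $c-1$, not $c+1$. Since $(c,a,b\,|\,c)-(c-1,a,b\,|\,c-1)=\delta_1-\epsilon\in\Z_{\geq 0}\Pi$, the weight $(c-1,a,b\,|\,c-1)$ is strictly lower than $\lambda$ in the Bruhat order, so $\lambda$ would no longer be lowest in the projection and $P_\lambda$ could not be extracted as a summand by the strategy. Once you correct the representation to $V$ and recognize that the $+\delta_i$ and $+\epsilon$ shifts together fill out both levels, the generic case goes through cleanly with one translation, and the degenerate subcases (1.1.2), (1.2.*), (2.*) are indeed the straightforward reductions you anticipate. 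The edge-case concern you raise about $b=c-1$ in (2.1.1) does not actually arise: with $a>c+1$ and $b<c$, the weight $(c+1,a,b\,|\,c)$ (or your $(c,a,b\,|\,c+1)$) remains typical and the translation computation is unaffected.
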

\begin{thm}\label{gl316}
Let $a, b, c \in \mathbb{Z}$ with $a \geq b$. The projective objects $P_{c, b, a| c}$ in $\Bl_{a,b}$ have the following Verma flag formulae.
\begin{enumerate}[label=(\arabic*), ref=(\arabic*]
	\item Suppose $b > c$.
	\begin{enumerate}[label=\theenumi.\arabic*), ref = \arabic*]
		\item If $b > c+1$ and $a > b$, then
	        \begin{equation*}
	        \begin{aligned}            
                  P_{c,b,a|c} =  M_{c,b,a|c} + M_{c+1,b,a|c+1} + M_{b,c,a|c} 
                  M_{b,c+1,a|c+1} \\+ M_{c,a,b|c} + M_{c+1,a,b|c+1}
                  M_{b,a,c|c} + M_{b,a,c+1|c+1}\\ + M_{a,c,b|c}
                  M_{a,c+1,b|c+1} + M_{a,b,c|c} + M_{a,b,c+1|c+1}.
	        \end{aligned}
	        \end{equation*}							
        
		\item If $b = c+1$ and $a>c+1$,			
	        \begin{equation*}
	        \begin{aligned}            
                 P_{c,c+1,a|c} = M_{c,c+1,a|c} + M_{c+1,c,a|c} + M_{c+1,c+1,a|c+1} \\
                  M_{c,a,c+1|c} + M_{c+1,a,c|c} + M_{c+1,a,c+1|c+1} \\
                  M_{a,c,c+1|c} + M_{a,c+1,c|c} + M_{a,c+1,c+1|c+1}.
	        \end{aligned}
	        \end{equation*}							
	\end{enumerate}
	\item Suppose $b < c$.
	\begin{enumerate}[label=\theenumi.\arabic*), ref = \arabic*]
		\item Suppose $a > c$.
		\begin{enumerate}[label=\theenumi.\theenumii.\arabic*)]
			\item If $a > c+1$, then
	        \begin{equation*}
	        \begin{aligned}            
	        P_{c,b,a|c} =   M_{c,b,a|c} + M_{c+1,b,a|c+1} + M_{c,a,b|c} + M_{c+1,a,b|c+1} \\
                  M_{a,b,c|c} + M_{a,b,c+1|c+1} + M_{a,c,b|c} + M_{a,c+1,b|c+1}.
	        \end{aligned}	        
	        \end{equation*}							
			\item If $a = c+1$, then
	        \begin{equation*}
	        \begin{aligned}            
                  P_{c,b,c+1|c} =  M_{c,b,c+1|c} + M_{c,b,c|c} + M_{c+1,b,c+1|c+1} \\
                  M_{c,c+1,b|c} + M_{c,b,c|c} + M_{c+1,c+1,b|c+1}.
	        \end{aligned}
	        \end{equation*}				
		\end{enumerate}	
		\item If $a < c$ and $a > b$, then
	        \begin{equation*}
	        \begin{aligned}            
	        P_{c,b,a|c} = M_{c,b,a|c} + M_{c+1,b,a|c+1} + M_{c,a,b|c} + M_{c+1,a,b,c+1}
	        \end{aligned}.
	        \end{equation*}				
			
		\end{enumerate}		
\end{enumerate}
\end{thm}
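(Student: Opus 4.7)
My plan is to apply the translation-functor strategy of \cref{strategy} case by case to $\lambda \cong (c,b,a|c)$. The idea is to choose a typical starting weight $\mu = \lambda - \nu$, where $\nu$ is the lowest weight of a suitable small finite-dimensional representation $W$ of $\g = \gl(3|1)$ — typically the natural $V$, its dual $V^*$, or a wedge power. For example, in case (1.1) where $a > b > c+1$, taking $W = V$ and $\nu = \epsilon$ gives $\mu \cong (c, b, a | c-1)$, whose $\rho$-shift is typical because none of $a, b, c$ equals $c-1$. Lemma \ref{typAndDom} then writes $P_\mu$ as a sum of Verma modules $M_{w\mu}$ with $w \in \W$ and $w\mu \geq \mu$; Proposition \ref{sum} converts this into a standard filtration of $P_\mu \otimes W$ indexed by pairs consisting of a Weyl translate and a weight of $W$.

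The next step is to project this filtration onto $\Bl_{a,b}$ by retaining only those Verma summands whose highest weights are linked to $\lambda - \rho$, using the explicit description of the linkage class from \cref{ex:gl31}. By a careful choice of $W$ one arranges that $\lambda$ is the lowest weight among the retained terms in the Bruhat order, so that $P_\lambda$ splits off as a direct summand of $\mathrm{Pr}_\lambda(P_\mu \otimes W)$. To confirm that every Verma module listed in the statement actually occurs in the standard filtration of $P_\lambda$, I would systematically invoke Proposition \ref{filprop}: item (3) accounts for the odd-reflection terms coming from the isotropic roots orthogonal to $\lambda$, while items (2), (4), and (6) chain these together with even reflections along the simple roots $\delta_1 - \delta_2$ and $\delta_2 - \delta_3$ to produce the Weyl translates appearing in each subcase. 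Each application reduces to a short inner-product calculation with $\lambda$, $\beta$, and the relevant coroots.

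The main obstacle, as flagged in \cref{strategy}, is indecomposability: ruling out the possibility that $\mathrm{Pr}_\lambda(P_\mu \otimes W)$ splits off additional projective summands beyond $P_\lambda$. Any extra summand $P_\sigma$ would have $\sigma > \lambda$ in the Bruhat order and, being atypical, would contribute at least two Verma modules by Corollary \ref{corlen}; in the generic cases, this forces the projected filtration to exceed the count delivered by Proposition \ref{sum}, giving a contradiction. The delicate subcases are those where $\lambda$ sits on a wall of the dominant chamber, namely case (1.2) with $b = c+1$ and case (2.1.ii) with $a = c+1$. There the natural candidate $\mu$ produced from $V$ may itself be atypical, so I would instead translate from a different typical $\mu$ using $W = V^*$ or $W = \extp^2 V$; if the projection is still not already indecomposable, I would subtract off projectives $P_\sigma$ of linked weights $\sigma > \lambda$ whose Verma flags are supplied by Theorems \ref{gl311}--\ref{gl315}, leaving $P_{c,b,a|c}$ as the residual summand.
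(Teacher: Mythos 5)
Your overall plan tracks the paper's proof closely: choose a typical $\mu = \lambda - \nu$ with $\nu$ a low weight of $V$, $V^*$, or a wedge power; apply Lemma \ref{typAndDom} and Proposition \ref{sum}; project onto $\Bl_{a,b}$; pin down the necessary Verma subquotients via Proposition \ref{filprop}; and rule out extra projective summands using Corollary \ref{corlen} or, where that fails, by subtracting off $P_\sigma$ with known Verma flags from earlier theorems. You also correctly flag (1.2) (where $b=c+1$) and (2.1.ii) (where $a=c+1$) as the subcases where the paper switches to $V^*$, with $\mu \cong (c+1,c+1,a|c)$ and $\mu$ such that $P_\mu = M_{c+1,b,c+1|c} + M_{c+1,c+1,b|c}$ respectively, and $\lambda - \mu = -\delta_1$ a weight of $V^*$.

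Your one worked example, however, contains a coordinate sign error that would derail the computation if carried out as written. Under the paper's identification $\lambda = \sum_i q_i\delta_i - \sum_j r_j\epsilon_j \cong (q_1,\dots,q_m \mid r_1,\dots,r_n)$, subtracting the weight $\epsilon = \epsilon_1$ from $\lambda$ \emph{increases} the right-hand coordinate by one: for $\lambda \cong (c,b,a\mid c)$ one gets $\mu = \lambda - \epsilon \cong (c,b,a\mid c+1)$, not $(c,b,a\mid c-1)$. This matters, because the weight you wrote, $(c,b,a\mid c-1)$, equals $\lambda + \epsilon$, so $\lambda - \mu = -\epsilon$; but $-\epsilon$ is a weight of $V^*$, not of $V$, and tensoring $P_{(c,b,a\mid c-1)}$ with $V$ then produces no copy of $M_\lambda$ in the standard filtration at all, so the projection step never gets started. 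The corrected $\mu \cong (c,b,a\mid c+1)$ (which is the paper's choice) is still typical when $a > b > c+1$, since $c+1 \ne a,b,c$, and adding $\epsilon \in \mathrm{wt}(V)$ does recover $\lambda$. Apart from this slip, your outline is a faithful reconstruction of the paper's case-by-case argument.
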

\subsection{Proof}
In this subsection, we prove Theorems \ref{gl311} through \ref{gl316}. We use the method of translation functors by effecting certain representations. Here are the weights of these representations:
\\\\
Natural $V$:
    \begin{equation*}
    \begin{aligned}
    \{\delta_1, \ \delta_2, \ \delta_3, \ \epsilon \}    
    \end{aligned}
    \end{equation*}
Dual $V^*$:
    \begin{equation*}
    \begin{aligned}
    \{-\delta_1, \ -\delta_2, \ -\delta_3, \ -\epsilon \}    
    \end{aligned}
    \end{equation*}
Wedge-squared of the natural $\extp^2 V$:
    \begin{equation*}
    \begin{aligned}
    \{\delta_1 + \delta_2, \ \delta_2 + \delta_3, \ \delta_1 + \delta_3, \\ \delta_1 + \epsilon, \ \delta_2 + \epsilon ,  \ \delta_3 + \epsilon \ ,  2\epsilon\}    
    \end{aligned}
    \end{equation*}
Wedge-cubed of the natural $\extp^3 V$:
    \begin{equation*}
    \begin{aligned}
    \{\delta_1 + \delta_2 + \delta_3, \ \delta_1 + \delta_2 + \epsilon, \ \delta_2 + \delta_3 + \epsilon, \\ \delta_1 + \delta_3 + \epsilon, \ \delta_1 + 2\epsilon ,  \ \delta_2 + 2\epsilon \ , \delta_3 + 2\epsilon, \  3\epsilon\}    
    \end{aligned}
    \end{equation*}
Wedge-squared of the dual $\extp^2 V^*$:
    \begin{equation*}
    \begin{aligned}
    \{-\delta_1 - \delta_2, \ -\delta_2 - \delta_3, \ -\delta_1 - \delta_3, \\ -\delta_1 - \epsilon, \ -\delta_2 - \epsilon ,  \ -\delta_3 - \epsilon \ ,  -2\epsilon\}    
    \end{aligned}
    \end{equation*}
We now offer justification for the formulae above, separated into cases that have different formulae, based on the strategy in \cref{strategy}. Our proof will be more explicit in the earlier cases and cases which require more sophisticated techniques; those which lack much explanation follow the strategy almost directly.
\subsubsection{Case: $P_\lambda, \ \lambda \cong (a,b,c \ | \ c)$ }
\begin{enumerate}[label=(\arabic*), ref=(\arabic*]
\item $b > c$.
    \begin{enumerate}[label=\theenumi.\arabic*), ref = \arabic*]
        \item $b > c+1$. \\ Let $\mu \coloneqq \lambda - \epsilon$. By Proposition \ref{typAndDom} for the Verma flag  of $P_\mu$ and Proposition \ref{sum} for the Verma flag of the projection, we have the following:
            \begin{equation*}
            \begin{aligned}  
            P_\mu = M_{a,b,c|c+1},
            \mathrm{Pr}_\lambda (P_\mu\otimes V)= M_{a,b,c|c} + M_{a,b,c+1|c+1}.
            \end{aligned}
            \end{equation*} 
        We have that $P_\lambda$ is a direct summand. Apply Corollary \ref{corlen} to deduce that it is the only summand.
        \item $b = c+1$.
        \begin{enumerate}[label=\theenumi.\theenumii.\arabic*)]
            \item $a > c+2$. \\ Use $P_\mu$ as indicated below to produce the following standard filtration for the projection.
                \begin{equation*}
                \begin{aligned}
                P_\mu = M_{a,c+1,c|c+2}, \\
                \mathrm{Pr}_\lambda (P_\mu\otimes \extp^2 V) = M_{a,c+1,c|c} + M_{a,c+1,c+1|c+1}\\ + M_{a,c+2,c+1|c+2}.
                \end{aligned}
                \end{equation*}
            Now, $P_\lambda$ is a direct summand of the projection. We now apply . The module $M_{a,c+1,c+1|c+1}$ appears in the Verma flag. By Proposition \ref{filprop}[\ref{p4}], $M_{a,c+2,c+1|c+2}$ appears. We deduce that $P_\lambda$ is the only direct summand.
            \item $a=c+2$. \\ Use $P_\mu$ as indicated below and proceed with the strategy in \cref{strategy} to see $P_\lambda$ is the only direct summand in the projection.
                \begin{equation*}
                \begin{aligned}
                P_\mu = M_{c+2,c+1,c|c+3}, \\
                \mathrm{Pr}_\lambda (P_\mu\otimes \extp^3 V) = M_{c+2,c+1,c|c} + M_{c+2,c+2,c+1|c+2}\\ + M_{c+2,c+1,c+1|c+1} + M_{c+3,c+2,c+1|c+3}.
                \end{aligned}
                \end{equation*}
            \item $a = c+1$. \\ Use $P_\mu$ as indicated below and proceed with the strategy in \cref{strategy} to see $P_\lambda$ is the only direct summand in the projection.
                \begin{equation*}
                \begin{aligned}
                P_\mu = M_{c+1,c+1,c|c+2}, \\
                \mathrm{Pr}_\lambda (P_\mu\otimes \extp^2 V) = M_{c+1,c+1,c|c} + M_{c+1,c+1,c+1|c+1} \\ + M_{c+1,c+2,c+1|c+2} + M_{c+2,c+1,c+1|c+2}.
                \end{aligned}
                \end{equation*}
        \end{enumerate}
    \end{enumerate}
    
\item $b = c$.
    \begin{enumerate} [label=\theenumi.\arabic*), ref = \arabic*]
        \item $a > c+1$. \\ Use $P_\mu$ as indicated below and proceed with the strategy in \cref{strategy} to see $P_\lambda$ is the only direct summand in the projection. 
            \begin{equation*}
            \begin{aligned}
            P_\mu = M_{a,c,c|c+1},\\
            \mathrm{Pr}_\lambda (P_\mu\otimes V) = M_{a,c,c|c} + M_{a,c,c+1|c+1} + M_{a,c+1,c|c+1}.
            \end{aligned}
            \end{equation*}
        \item $a = c+1$. \\Use $P_\mu$ as indicated below and proceed with the strategy in \cref{strategy} to see $P_\lambda$ is the only direct summand in the projection. 
            \begin{equation*}
            \begin{aligned}
            P_\mu = M_{c+1,c,c|c+2}, \\
            \mathrm{Pr}_\lambda (P_\mu\otimes \extp^2 V) = M_{c+1,c,c|c} + M_{c+2,c+1,c|c+2} \\ + M_{c+1,c,c+1|c+1} + M_{c+1,c+1,c|c+1} + M_{c+2,c,c+1|c+2}.
            \end{aligned}
            \end{equation*}
        \item $a = c$. \\ Use $P_\mu$ as indicated below and proceed with the strategy in \cref{strategy} to see $P_\lambda$ is the only direct summand in the projection. 
            \begin{equation*}
            \begin{aligned}
            P_\mu = M_{c,c,c|c+1},\\
            \mathrm{Pr}_\lambda (P_\mu\otimes V) = M_{c,c,c|c} + M_{c,c,c+1|c+1} \\ + M_{c,c+1,c|c+1} + M_{c+1,c,c|c+1}.
            \end{aligned}
            \end{equation*}

    \end{enumerate}

\item $b < c$
    \begin{enumerate} [label=\theenumi.\arabic*), ref = \arabic*]
        \item $a > c$.
        \begin{enumerate}[label=\theenumi.\theenumii.\arabic*)]
            \item $a > c+1$. \\ Use $P_\mu$ as indicated below and proceed with the strategy in \cref{strategy} to see $P_\lambda$ is the only direct summand in the projection. 
                \begin{equation*}
                \begin{aligned}
                P_\mu = M_{a,b,c|c+1} + M_{a,c,b|c+1}, \\
                \mathrm{Pr}_\lambda (P_\mu\otimes V) = M_{a,b,c|c} + M_{a,b,c+1|c+1}\\ + M_{a,c,b|c} + M_{a,c+1,b|c+1}.
                \end{aligned}
                \end{equation*}
            \item $a = c+1$. \\Use $P_\mu$ as indicated below and proceed with the strategy in \cref{strategy} to see $P_\lambda$ is the only direct summand in the projection. 
                \begin{equation*}
                \begin{aligned}
                P_\mu = M_{c+1,b,c|c+2} + M_{c+1,c,b|c+2}, \\
                \mathrm{Pr}_\lambda (P_\mu\otimes \extp^2 V) = M_{c+1,b,c|c} + M_{c+1,b,c+1|c+1} \\+ M_{c+2,b,c+1|c+2} + M_{c+1,c,b|c}\\ + M_{c+1,c+1,b|c+1} + M_{c+2,c+1,b|c+2}.
                \end{aligned}
                \end{equation*}

        \end{enumerate}
    
        \item $a = c$. \\Use $P_\mu$ as indicated below and proceed with the strategy in \cref{strategy} to see $P_\lambda$ is the only direct summand in the projection. 
            \begin{equation*}
            \begin{aligned}
            P_\mu = M_{c,b,c|c+1} + M_{c,c,b|c+1}, \\
            \mathrm{Pr}_\lambda (P_\mu\otimes V) = M_{c,b,c|c} + M_{c,b,c+1|c+1} + M_{c+1,b,c|c+1} \\ + M_{c,c,b|c} + M_{c,c+1,b|c+1} + M_{c+1,c,b|c+1}.
            \end{aligned}
            \end{equation*}
   
        \item $a < c$. 
        \begin{enumerate}[label=\theenumi.\theenumii.\arabic*)]
            \item $a > b$. \\ Use $P_\mu$ as indicated below and proceed with the strategy in \cref{strategy} to see $P_\lambda$ is the only direct summand in the projection. 
                \begin{equation*}
                \begin{aligned}
                P_\mu = M_{a,b,c|c+1} + M_{a,c,b|c+1} + M_{c,a,b|c+1}\\ + M_{c,b,a|c+1}, \\
                \mathrm{Pr}_\lambda (P_\mu\otimes V) = M_{a,b,c|c} + M_{a,b,c+1|c+1} + M_{a,c,b|c}\\  + M_{c,a,b|c} + M_{a,c+1,b|c+1} + M_{c+1,a,b|c+1}\\ + M_{c,b,a|c} + M_{c+1,b,a|c+1}.
                \end{aligned}
                \end{equation*}
            \item $a = b$. \\ Use $P_\mu$ as indicated below and proceed with the strategy in \cref{strategy} to see $P_\lambda$ is the only direct summand in the projection. 
                \begin{equation*}
                \begin{aligned}
                P_\mu = M_{b,b,c|c+1} + M_{b,c,b|c+1} + M_{c,b,b|c+1}, \\
                \mathrm{Pr}_\lambda (P_\mu\otimes V) = M_{b,b,c|c} + M_{b,b,c+1|c+1} + M_{b,c,b|c}\\  + M_{b,c+1,b|c+1}  + M_{c,b,b|c} + M_{c+1,b,b|c+1}.
                \end{aligned}
                \end{equation*}

        \end{enumerate}
    
    \end{enumerate}
\end{enumerate}

\subsubsection{Case: $P_\lambda, \ \lambda\cong(b,a,c\ | \ c)$}
\begin{enumerate}[label=(\arabic*), ref=(\arabic*]
\item $b > c$.
    \begin{enumerate}[label=\theenumi.\arabic*), ref = \arabic*]
        \item $b > c+1$.
        \begin{enumerate}[label=\theenumi.\theenumii.\arabic*)]
            \item $a > b$. \\ Use $P_\mu$ as indicated below and proceed with the strategy in \cref{strategy} to see $P_\lambda$ is the only direct summand in the projection.
                \begin{equation*}
                \begin{aligned}
                P_\mu = M_{b,a,c|c+1} + M_{a,b,c|c+1}, \\ 
                \mathrm{Pr}_\lambda (P_\mu \otimes V) = M_{b,a,c|c} + M_{b,a,c+1|c+1} + M_{a,b,c|c}\\ + M_{a,b,c+1|c+1}.
                \end{aligned}
                \end{equation*}
            \item $a = b$. \\ This is a repeat of an earlier case.
        \end{enumerate}    
        \item $b = c+1$.
        \begin{enumerate}[label=\theenumi.\theenumii.\arabic*)]
            \item $a > c+2$. \\  Use $P_\mu$ as indicated below and proceed with the strategy in \cref{strategy} to see $P_\lambda$ is the only direct summand in the projection.
                \begin{equation*}
                \begin{aligned}
                P_\mu = M_{c+1,a,c|c+2} +  M_{c+1,a,c|c+2}, \\
                \mathrm{Pr}_\lambda (P_\mu \otimes \extp^2 V) = M_{c+1,a,c|c} + M_{c+2,a,c+1|c+2}\\ + M_{c+1,a,c+1|c+1} + M_{a,c+1,c|c}\\ + M_{a,c+2,c+1|c+2}+ M_{a,c+1,c+1|c+1}.
                \end{aligned}
                \end{equation*}         
            \item $a = c+2$. \\  Use $P_\mu$ as indicated below and proceed with the strategy in \cref{strategy} to deduce the following:
                \begin{equation*}
                \begin{aligned}
                P_\mu = M_{c+1,c+2,c|c+3} +  M_{c+2,c+1,c|c+3}, \\
                P_1\coloneqq \mathrm{Pr}_\lambda (P_\mu \otimes \extp^3 V) = M_{c+1,c+2,c|c} + 2M_{c+2,c+2,c+1|c+2}\\ + M_{c+1,c+2,c+1|c+1} + M_{c+2,c+3,c+1|c+3}  + M_{c+2,c+1,c|c}\\ + M_{c+2,c+1,c+1|c+1} + M_{c+3,c+2,c+1|c+3}.
                \end{aligned}
                \end{equation*}
            If we proceed with our strategy, we realize that $M_{c+1,c+2,c|c}$, $M_{c+2,c+1,c|c}$, $M_{c+1,c+2,c+1|c+1}$, $M_{c+2,c+1,c+1|c+1}$, and one copy of $M_{c+2,c+2,c+1|c+2}$ are the only modules we deduce that can appear in the standard filtration of $P_\lambda$. This leaves three terms remaining, of which the one with lowest weight is the second copy of $M_{c+2,c+2,c+1|c+2}$. This means that if there were another summand, it would be $P_{c+2,c+2,c+1|c+2}$. Observe that this is of the form $P_{\hat{c}, \hat{c}, \hat{b}|\hat{c}}$, where $\hat{b}, \hat{c} \in \mathbb{Z}$ and $\hat{b} < \hat{c}$ (Theorem \ref{gl313}, Item 2.2). This has a Verma flag of length $3$, which would correspond to the remaining Verma modules. We realize that there may be another summand in the projection. Therefore, we try another approach.
            \\\\
            We now let $\theta \coloneqq \lambda - (\delta_2 + \epsilon)$. Observe that $\theta \cong (c+1, c+1,c\ |\ c+1)$ is atypical, so by looking at the same case (Theorem \ref{gl313}, Item 2.2) \ref{help0}), we have the following:
                \begin{equation*}
                \begin{aligned}
                P_\theta = M_{c+1,c+1,c|c+1} +  M_{c+1,c+2,c|c+2} + M_{c+2,c+1,c|c+2}, \\
                P_2\coloneqq\mathrm{Pr}_\lambda (P_\theta \otimes \extp^2 V) = 2M_{c+1,c+2,c|c} + 2M_{c+2,c+1,c|c} \\+ 2M_{c+1,c+2,c+1|c+1} + 2M_{c+2,c+1,c+1|c+1}\\ + 2M_{c+2,c+2,c+1|c+2}.
                \end{aligned}
                \end{equation*}   
            We have that $P_\lambda$ also appears as a direct summand for both $P_1$ and $P_2$. However, in a Verma flag for $P_2$, we have the same five terms each appear twice. We deduce that $P_\lambda$ cannot have more than $5$ terms and that both $P_1$ and $P_2$ split up as the direct sum of two projectives. We conclude that these same five terms form the Verma flag for $P_\lambda$.
            \item $a = c+1$. \\ This is a repeat of an earlier case.
        \end{enumerate}

    \end{enumerate}
\item $b = c$.
    \begin{enumerate}[label=\theenumi.\arabic*), ref = \arabic*]
        \item $a > c$. 
        \begin{enumerate}[label=\theenumi.\theenumii.\arabic*)]
            \item $a > c+1$. \\ Use $P_\mu$ as indicated below and proceed with the strategy in \cref{strategy} to see $P_\lambda$ is the only direct summand in the projection.
                \begin{equation*}
                \begin{aligned}
                P_\mu = M_{c,a,c|c+1} +  M_{a,c,c|c+1}, \\
                \mathrm{Pr}_\lambda (P_\mu \otimes V) = M_{c,a,c|c} + M_{c,a,c+1|c+1} + M_{c+1,a,c|c+1} \\ + M_{a,c,c|c} + M_{a,c,c+1|c+1} + M_{a,c+1,c|c+1}.
                \end{aligned}
                \end{equation*}    
    
            \item $a = c+1$. \\ Use $P_\mu$ as indicated below and proceed with the strategy in \cref{strategy} to see $P_\lambda$ is the only direct summand in the projection.
                \begin{equation*}
                \begin{aligned}
                P_\mu = M_{c,c+1,c|c+2} +  M_{c+1,c,c|c+2}, \\
                \mathrm{Pr}_\lambda (P_\mu \otimes \extp^3 V^*) = M_{c,c+1,c|c} + M_{c,c+2,c+1|c+2} + M_{c+1,c+2,c|c+2} \\ + M_{c,c+1,c+1|c+1} + 2M_{c+1,c+1,c|c+1} + M_{c+2,c,c+1|c+2}\\ + M_{c+2,c+1,c|c+2}  + M_{c+1,c,c+1|c+1} + M_{c+1,c,c|c}.
                \end{aligned}
                \end{equation*}
        \end{enumerate}
        \item $a = c$. \\ This is a repeat of an earlier case.   
    \end{enumerate}
\item $b < c$.
    \begin{enumerate}[label=\theenumi.\arabic*), ref = \arabic*]
    \item $a > c$.
    \begin{enumerate}[label=\theenumi.\theenumii.\arabic*)]
        \item $a > c+1$. \\ Use $P_\mu$ as indicated below and proceed with the strategy in \cref{strategy} to see $P_\lambda$ is the only direct summand in the projection.
            \begin{equation*}
            \begin{aligned}
            P_\mu = M_{b,a,c|c+1} + M_{a,b,c|c+1} + M_{a,c,b|c+1}\\ + M_{c,a,b|c+1}, \\
            \mathrm{Pr}_\lambda (P_\mu \otimes V) = M_{b,a,c|c} + M_{a,b,c|c} + M_{a,c,b|c} + M_{c,a,b|c}\\ + M_{b,a,c+1|c+1} + M_{a,b,c+1|c+1} + M_{a,c+1,b|c+1}\\ + M_{c+1,a,b|c+1}.
            \end{aligned}
            \end{equation*}          
        \item $a=c+1$. \\ Use $P_\mu$ as indicated below and proceed with the strategy in \cref{strategy} to see $P_\lambda$ is the only direct summand in the projection.
            \begin{equation*}
            \begin{aligned}
            P_\mu = M_{b,c+1,c|c+2} + M_{c+1,b,c|c+2} + M_{c+1,c,b|c+2}\\ + M_{c+1,a,b|c+2}, \\
            \mathrm{Pr}_\lambda (P_\mu \otimes \extp^2 V) = 
              M_{b,c+1,c|c} + M_{b,c+2,c+1|c+2} + M_{b,c+1,c+1|c+1} \\
            + M_{c+1,b,c|c} + M_{c+2,b,c+1|c+2} + M_{c+1,b,c+1|c+1} \\
            + M_{c,c+1,b|c} + M_{c+1,c+2,b|c+2} + M_{c+1,c+1,b|c+1} \\
            + M_{c+1,c,b|c} + M_{c+2,c+1,b|c+2} + M_{c+1,c+1,b|c+1}.
            \end{aligned}
            \end{equation*}   
        \end{enumerate}    
        \item $a = c$. \\ Use $P_\mu$ as indicated below and proceed with the strategy in \cref{strategy} to see $P_\lambda$ is the only direct summand in the projection.
            \begin{equation*}
            \begin{aligned}         
            P_\mu = M_{b,c,c|c+1} + M{c,b,c|c+1} + M_{c,c,b|c+1}, \\
            \mathrm{Pr}_\lambda (P_\mu \otimes V) =
              M_{b,c,c|c} + M_{b,c,c+1|c+1} + M_{b,c+1,c|c+1} \\
            + M_{c,b,c|c} + M_{c,b,c+1|c+1} + M_{c+1,b,c|c+1} \\
            + M_{c,c,b|c} + M_{c,c+1,b|c+1} + M_{c+1,c,b|c+1}.
            \end{aligned}
            \end{equation*}
        \item $a < c$.
        \begin{enumerate}[label=\theenumi.\theenumii.\arabic*)]
            \item $a > b$. \\ Use $P_\mu$ as indicated below and proceed with the strategy in \cref{strategy} to see $P_\lambda$ is the only direct summand in the projection.
            \begin{equation*}
            \begin{aligned}   
            P_\mu = M_{b,a,c|c+1} + M_{b,c,a|c+1} + M_{a,b,c|c+1} + M_{c,b,a|c+1}\\ + M_{a,c,b|c+1} + M_{c,a,b|c+1}, \\
            \mathrm{Pr}_\lambda (P_\mu \otimes V) = 
              M_{b,a,c|c} + M_{b,a,c+1|c+1} + M_{b,c,a|c} \\
            + M_{b,c+1,a|c+1} + M_{a,b,c|c} + M_{a,b,c+1|c+1} \\
            + M_{c,b,a|c} + M_{c+1,b,a|c+1} + M_{a,c,b|c} \\
            + M_{a,c+1,b|c+1} + M_{c,a,b|c} + M_{c+1,a,b|c+1}.
            \end{aligned}
            \end{equation*}             
            \item $a = b$. \\ This is a repeat of an earlier case.
        \end{enumerate}
    \end{enumerate}
\end{enumerate}

\subsubsection{Case: $P_\lambda, \ \lambda \cong  (a,c,b \ | \ c)$ }
\begin{enumerate}[label=(\arabic*), ref=(\arabic*]
\item $b > c$.
    \begin{enumerate}[label=\theenumi.\arabic*), ref = \arabic*]
        \item $b > c+1$. \\  Use $P_\mu$ as indicated below and proceed with the strategy in \cref{strategy} to see $P_\lambda$ is the only direct summand in the projection.
            \begin{equation*}
            \begin{aligned}
            P_\mu = M_{a,c,b|c+1} + M_{a,b,c|c+1}, \\
            \mathrm{Pr}_\lambda (P_\mu \otimes V) = M_{a,c,b|c} + M_{a,c+1,b|c+1} + M_{a,b,c|c} + M_{a,b,c+1|c+1}.
            \end{aligned}
            \end{equation*}      
        \item $b = c+1$
        \begin{enumerate}[label=\theenumi.\theenumii.\arabic*)]
            \item $a > c+1$. \\ Use $P_\mu$ as indicated below and proceed with the strategy in \cref{strategy} to see $P_\lambda$ is the only direct summand in the projection.
                \begin{equation*}
                \begin{aligned}
                P_\mu = M_{a+1,c+1,c+1|c}, \\
                \mathrm{Pr}_\lambda (P_\mu \otimes \extp^2 V^*) = M_{a,c,c+1|c} + M_{a,c+1,c+1|c+1} + M_{a,c+1,c|c}.
                \end{aligned}
                \end{equation*}    
            \item $a = c+1$. \\Use $P_\mu$ as indicated below and proceed with the strategy in \cref{strategy} to see $P_\lambda$ is the only direct summand in the projection.
                \begin{equation*}
                \begin{aligned}
                P_\mu = M_{c+2,c+1,c+1|c}, \\
                \mathrm{Pr}_\lambda (P_\mu \otimes \extp^2 V^*) = M_{c+1,c,c+1|c}\\ + M_{c+1,c+1,c+1|c+1} + M_{c+2,c+1,c+1|c+2} + M_{c+1,c+1,c|c}.
                \end{aligned}
                \end{equation*}                
                        
        \end{enumerate}
    
    \end{enumerate}

\item $b = c$. This is a repeat of an earlier case.
\item $b < c$.
    \begin{enumerate}[label=\theenumi.\arabic*), ref = \arabic*]
    \item $a > c$.
     \begin{enumerate}[label=\theenumi.\theenumii.\arabic*)]
        \item $a > c+1$. \\Use $P_\mu$ as indicated below and proceed with the strategy in \cref{strategy} to see $P_\lambda$ is the only direct summand in the projection.
            \begin{equation*}
            \begin{aligned}        
            P_\mu = M_{a,c,b|c+1}, \\
            \mathrm{Pr}_\lambda (P_\mu \otimes V) = M_{a,c,b|c} + M_{a,c+1,b|c+1}.
            \end{aligned}
            \end{equation*}        
        \item $a = c+1$. \\Use $P_\mu$ as indicated below and proceed with the strategy in \cref{strategy} to see $P_\lambda$ is the only direct summand in the projection.
            \begin{equation*}
            \begin{aligned}
            P_\mu = M_{c+1,c,b|c+2}, \\
            \mathrm{Pr}_\lambda (P_\mu \otimes \extp^2 V) = M_{c+1,c,b|c} + M_{c+1,c+1,b|c+1} + M_{c+2,c+1,b|c+2}.
            \end{aligned}
            \end{equation*}   
        \end{enumerate}
        \item $a = c$. \\ Use $P_\mu$ as indicated below and proceed with the strategy in \cref{strategy} to see $P_\lambda$ is the only direct summand in the projection.
            \begin{equation*}
            \begin{aligned}  
            P_\mu = M_{c,c,b|c+1}, \\
            \mathrm{Pr}_\lambda (P_\mu \otimes V) = M_{c,c,b|c} + M_{c,c+1,b|c+1} + M_{c+1,c,b|c+1}.
            \end{aligned}
            \end{equation*}  
        \item $a < c$. \\  Use $P_\mu$ as indicated below and proceed with the strategy in \cref{strategy} to see $P_\lambda$ is the only direct summand in the projection.
            \begin{equation*}
            \begin{aligned}
            P_\mu = M_{a,c,b|c+1} + M_{c,a,b|c+1}, \\
            \mathrm{Pr}_\lambda (P_\mu \otimes V) = M_{a,c,b|c} + M_{a,c+1,b|c+1} + M_{c,a,b|c} + M_{c+1,a,b|c+1}.
            \end{aligned}
            \end{equation*}
    \end{enumerate}

\end{enumerate}

\subsubsection{Case: $P_\lambda, \ \lambda \cong  (b,c,a \ | \ c)$}
\begin{enumerate}[label=(\arabic*), ref=(\arabic*]
    \item $b > c$.
    \begin{enumerate}[label=\theenumi.\arabic*), ref = \arabic*]
        \item $b > c+1$.
        \begin{enumerate}[label=\theenumi.\theenumii.\arabic*)]
            \item $a > b$. \\ Use $P_\mu$ as indicated below and proceed with the strategy in \cref{strategy} to see $P_\lambda$ is the only direct summand in the projection.
            \begin{equation*}
            \begin{aligned}
            P_\mu = M_{b,c,a|c+1} + M_{b,a,c|c+1} + M_{a,b,c|c+1} + M_{a,c,b|c+1}, \\
            \mathrm{Pr}_\lambda (P_\mu \otimes V) = 
              M_{b,c,a|c} + M_{b,a,c|c} + M_{a,b,c|c} + M_{a,c,b|c}\\
            + M_{b,c+1,a|c+1} + M_{b,a,c+1|c+1} + M_{a,b,c+1|c+1} + M_{a,c+1,b|c+1}.
            \end{aligned}
            \end{equation*}         
            \item $a = b$. \\ This is a repeat of an earlier case.
        \end{enumerate}    
        \item $b = c+1$.
        \begin{enumerate}[label=\theenumi.\theenumii.\arabic*)]
            \item $a > c+2$. \\  Use $P_\mu$ as indicated below and proceed with the strategy in \cref{strategy} to see $P_\lambda$ is the only direct summand in the projection.
            \begin{equation*}
            \begin{aligned}
            P_\mu = M_{c+1,c,a|c+2} + M_{a,c,c+1|c+2} + M_{c+1,a,c|c+2} + M_{a,c+1,c|c+2}, \\
            \mathrm{Pr}_\lambda (P_\mu \otimes \extp^2 V) = 
              M_{c+1,c,a|c} + M_{c+2,c+1,a|c+2} + M_{c+1,c+1,a|c+1} \\
            + M_{a,c,c+1|c} + M_{a,c+1,c+2|c+2} + M_{a,c+1,c+1|c+1} \\
            + M_{c+1,a,c|c} + M_{c+1,a,c+2|c+2} + M_{c+1,a,c+1|c+1} \\
            + M_{a,c+1,c|c} + M_{a,c+2,c+1|c+2} + M_{a,c+1,c+1|c+1}. \\
            \end{aligned}
            \end{equation*}         
            \item $a = c+2$. \\ Use $P_\mu$ as indicated below and proceed with the strategy in \cref{strategy} to see $P_\lambda$ is the only direct summand in the projection.
            \begin{equation*}
            \begin{aligned}
            P_\mu = M_{c+2,c+1,c+2|c} + M_{c+2,c+2,c+1|c}, \\
            \mathrm{Pr}_\lambda (P_\mu \otimes \extp^2 V^*) = 
              M_{c+1,c,c+2|c} + M_{c+2,c,c+1|c} + M_{c+1,c+1,c+2|c+1}\\ + M_{c+2,c+1,c+1|c+1}
            + M_{c+2,c+1,c+2|c+2} + M_{c+2,c+1,c|c}\\ + M_{c+1,c+2,c|c} + M_{c+1,c+2,c+1|c+1} 
            + M_{c+2,c+1,c+1|c+1}\\ + M_{c+2,c+2,c+1|c+2}.
            \end{aligned}
            \end{equation*}              
            \item $a = c+1$. \\ This is a repeat of an earlier case.

        \end{enumerate}

    \end{enumerate}
    \item $b = c$.
    \begin{enumerate}[label=\theenumi.\arabic*), ref = \arabic*]
        \item $a > c+1$. \\ Use $P_\mu$ as indicated below and proceed with the strategy in \cref{strategy} to see $P_\lambda$ is the only direct summand in the projection.
            \begin{equation*}
            \begin{aligned}
            P_\mu = M_{c,c,a|c+1} + M_{c,a,c|c+1} + M_{a,c,c|c+1}, \\
            \mathrm{Pr}_\lambda (P_\mu \otimes V) = 
              M_{c,c,a|c} + M_{c+1,c,a|c+1} + M_{c,c+1,a|c+1} \\
            + M_{c,a,c|c} + M_{c+1,a,c|c+1} + M_{c,a,c+1|c+1} \\
            + M_{a,c,c|c} + M_{a,c+1,c|c+1} + M_{a,c,c+1|c+1}.
            \end{aligned}
            \end{equation*}    
        \item $a = c+1$. \\ Use $P_\mu$ as indicated below and proceed with the strategy in \cref{strategy} to see $P_\lambda$ is the only direct summand in the projection.
            \begin{equation*}
            \begin{aligned}
            \mathrm{Pr}_\lambda (P_\mu \otimes \extp^2 V^*) =
             P_\mu = M_{c+1,c+1,c+1|c}, \\
              M_{c,c,c+1|c} + M_{c+1,c,c|c} + M_{c,c+1,c|c} \\
            + M_{c,c+1,c+1|c+1} + M_{c+1,c,c+1|c+1} + M_{c+1,c+1,c|c+1}.
            \end{aligned}
            \end{equation*}
        \item $a = c$. \\ This is a repeat of an earlier case. 
            
    \end{enumerate}    
    \item $b < c$.
    \begin{enumerate}[label=\theenumi.\arabic*), ref = \arabic*]
        \item $a > c+1$. \\ Use $P_\mu$ as indicated below and proceed with the strategy in \cref{strategy} to see $P_\lambda$ is the only direct summand in the projection.
            \begin{equation*}
            \begin{aligned}
            P_\mu = M_{b,c,a|c+1} + M_{c,b,a|c+1} + M_{a,b,c|c+1}\\
	        			+ M_{a,c,b|c+1} + M_{b,a,c|c+1} + M_{c,a,b|c+1},\\
             \mathrm{Pr}_\lambda (P_\mu \otimes V) = M_{b,c,a|c} + M_{b,c+1,a|c+1} + M_{c,b,a|c} + M_{c+1,b,a|c+1}\\ + M_{a,b,c|c} + M_{a,b,c+1|c+1}
	        			+ M_{a,c,b|c} + M_{a,c+1,b|c+1}\\ + M_{b,a,c|c} + M_{b,a,c+1|c+1} + M_{c,a,b|c} + M_{c+1,a,b|c+1}.
            \end{aligned}
            \end{equation*} 
        \item $a = c+1$. 
        \begin{enumerate}[label=\theenumi.\theenumii.\arabic*)]
            \item $b = c - 1$. \\ Use $P_\mu$ as indicated below and proceed with the strategy in \cref{strategy} to see $P_\lambda$ is the only direct summand in the projection.
                \begin{equation*}
                \begin{aligned} 
                P_\mu = M_{c-1,c+1,c+1|c} + M_{c+1,c-1,c+1|c} + M_{c+1,c+1,c-1|c}, \\
                \mathrm{Pr}_\lambda (P_\mu \otimes V^*) = 
                   M_{c-1,c,c+1|c} + M_{c-1,c+1,c|c} + M_{c-1,c+1,c+1|c+1} \\ 
                 + M_{c,c-1,c+1|c} + M_{c+1,c-1,c|c} + M_{c+1,c-1,c+1|c+1} \\
                 + M_{c,c+1,c-1|c} + M_{c+1,c,c-1|c} + M_{c+1,c+1,c-1|c+1}. 
                \end{aligned}
                \end{equation*}         
            \item $b < c - 1$. \\  Use $P_\mu$ as indicated below and proceed with the strategy in \cref{strategy} to see $P_\lambda$ is the only direct summand in the projection.
                \begin{equation*}
                \begin{aligned}            
                P_\mu = M_{b-1,c,c+1|c} + M_{b-1,c+1,c|c} + M_{c,b-1,c+1|c}\\ + M_{c+1,b-1,c|c} + M_{c,c+1,b-1|c} + M_{c+1,c,b-1|c}, \\
                \mathrm{Pr}_\lambda (P_\mu \otimes V) = 
                   M_{b,c,c+1|c} + M_{b,c+1,c|c} + M_{c,b,c+1|c} \\
                 + M_{c,c+1,b|c} + M_{c+1,b,c|c} + M_{c+1,b,c|c}.
                \end{aligned}
                \end{equation*}              
        \end{enumerate}            
        \item $a = c$. \\ This is a repeat of an earlier case.
        \item $a < c$.
        \begin{enumerate}[label=\theenumi.\theenumii.\arabic*)]
            \item $a > b$. \\ Use $P_\mu$ as indicated below and proceed with the strategy in \cref{strategy} to see $P_\lambda$ is the only direct summand in the projection.
                \begin{equation*}
                \begin{aligned}            
                P_\mu = M_{b,c,a|c+1} + M_{c,b,a|c+1} + M_{a,c,b|c+1} + M_{c,a,b|c+1}, \\
                \mathrm{Pr}_\lambda (P_\mu \otimes V) = 
                        M_{b,c,a|c} + M_{b,c+1,a|c+1} + M_{c,b,a|c} + M_{c+1,b,a|c+1}\\
	                    + M_{a,c,b|c} + M_{a,c+1,b|c+1} + M_{c,a,b|c} + M_{c+1,a,b|c+1}.
                \end{aligned}
                \end{equation*}              
            \item $a = b$. \\This is a repeat of an earlier case.
        \end{enumerate}
    \end{enumerate}
\end{enumerate}

\subsubsection{Case: $P_\lambda, \ \lambda \cong (c,a,b \ | \ c)$}
\begin{enumerate}[label=(\arabic*), ref=(\arabic*]
\item $b > c$.
    \begin{enumerate}[label=\theenumi.\arabic*), ref = \arabic*]
        \item $b > c+1$.
        \begin{enumerate}[label=\theenumi.\theenumii.\arabic*)]
            \item $a > b$. \\ Use $P_\mu$ as indicated below and proceed with the strategy in \cref{strategy} to see $P_\lambda$ is the only direct summand in the projection.
                \begin{equation*}
                \begin{aligned}
                P_\mu = M_{c+1,a,b|c} + M_{a,c+1,b|c} + M_{b,a,c+1|c} + M_{a,b,c+1|c}, \\
                \mathrm{Pr}_\lambda (P_\mu \otimes V^*) = 
                  M_{c,a,b|c} + M_{c+1,a,b|c+1} + M_{a,c,b|c} + M_{a,c+1,b|c+1} \\
                  M_{b,a,c|c} + M_{b,a,c+1|c+1} + M_{a,b,c|c} + M_{a,b,c+1|c+1}.
                \end{aligned}
                \end{equation*}         
            \item $a = b$. \\ Use $P_\mu$ as indicated below and proceed with the strategy in \cref{strategy} to see $P_\lambda$ is the only direct summand in the projection.
                \begin{equation*}
                \begin{aligned}
                P_\mu = M_{c+1,b,b|c} + M_{b,c+1,b|c} + M_{b,b,c+1|c}, \\
                \mathrm{Pr}_\lambda (P_\mu \otimes V^*) = 
                  M_{c,b,b|c} + M_{c+1,b,b|c+1} + M_{b,c,b|c} \\
                + M_{b,c+1,b|c+1} + M_{b,b,c|c} + M_{b,b,c+1|c+1}.
                \end{aligned}
                \end{equation*}

        \end{enumerate}    
        \item $b = c+1$.
        \begin{enumerate}[label=\theenumi.\theenumii.\arabic*)]
            \item $a > c+1$. \\ Use $P_\mu$ as indicated below and proceed with the strategy in \cref{strategy} to see $P_\lambda$ is the only direct summand in the projection.
                \begin{equation*}
                \begin{aligned}
                P_\mu = M_{c+1,a,c+1|c} + M_{a,c+1,c+1|c}, \\
                \mathrm{Pr}_\lambda (P_\mu \otimes V^*) = 
                  M_{c,a,c+1|c} + M_{c+1,a,c|c} + M_{c+1,a,c+1|c+1} \\
                + M_{a,c,c+1|c} + M_{a,c+1,c|c} + M_{a,c+1,c+1|c+1}.
                \end{aligned}
                \end{equation*}           
            \item $a = c+1$. \\ Use $P_\mu$ as indicated below and proceed with the strategy in \cref{strategy} to see $P_\lambda$ is the only direct summand in the projection.
                \begin{equation*}
                \begin{aligned}
                P_\mu = M_{c+1,c+1,c+1|c}, \\
                \mathrm{Pr}_\lambda (P_\mu \otimes V^*) = M_{c,c+1,c+1|c} + M_{c+1,c,c+1|c}\\ + M_{c+1,c+1,c|c} + M_{c+1,c+1,c+1|c+1}.
                \end{aligned}
                \end{equation*} 
        \end{enumerate}
    \end{enumerate}

\item $b = c$. This is a repeat of an earlier case.

\item $b < c$. 
    \begin{enumerate}[label=\theenumi.\arabic*), ref = \arabic*]
        \item $a > c$. 
        \begin{enumerate}[label=\theenumi.\theenumii.\arabic*)]
            
            \item $a > c+1$ \\ Use $P_\mu$ as indicated below and proceed with the strategy in \cref{strategy} to see $P_\lambda$ is the only direct summand in the projection. 
                \begin{equation*}
                \begin{aligned}
                P_\mu = M_{c+1,a,b|c} + M_{a,c+1,b|c}, \\ 
                \mathrm{Pr}_\lambda (P_\mu \otimes V^*) = M_{c,a,b|c} + M_{c+1,a,b|c+1} + M_{a,c,b|c} + M_{a,c+1,b|c+1}.
                \end{aligned}
                \end{equation*}        
            \item $a = c+1$ \\  Use $P_\mu$ as indicated below and proceed with the strategy in \cref{strategy} to see $P_\lambda$ is the only direct summand in the projection. 
                \begin{equation*}
                \begin{aligned}
                P_\mu = M_{c+1,c+1,b|c}, \\
                \mathrm{Pr}_\lambda (P_\mu \otimes V^*) = M_{c,c+1,b|c} + M_{c+1,c+1,b|c+1}.
                \end{aligned}
                \end{equation*}            

        \end{enumerate}    
        \item $a = c$. \\ This is a repeat of an earlier case.
         \item $a < c$. \\  Use $P_\mu$ as indicated below and proceed with the strategy in \cref{strategy} to see $P_\lambda$ is the only direct summand in the projection. 
            \begin{equation*}
            \begin{aligned}
            P_\mu = M_{c,a,b|c+1}, \\
            \mathrm{Pr}_\lambda (P_\mu \otimes V) = M_{c,a,b|c} + M_{c+1,a,b|c+1}.
            \end{aligned}
            \end{equation*}           

    \end{enumerate}
\end{enumerate}

\subsubsection{Case: $P_\lambda, \ \lambda \cong (c,b,a \ | \ c)$}
\begin{enumerate}[label=(\arabic*), ref=(\arabic*]
\item $b > c$.
    \begin{enumerate}[label=\theenumi.\arabic*), ref = \arabic*]
        \item $b > c+1$.
        \begin{enumerate}[label=\theenumi.\theenumii.\arabic*)]
            
            \item $a > b$. \\  Use $P_\mu$ as indicated below and proceed with the strategy in \cref{strategy} to see $P_\lambda$ is the only direct summand in the projection. 
                \begin{equation*}
                \begin{aligned}            
                P_\mu = M_{c,b,a|c+1} + M_{b,c,a|c+1} + M_{c,a,b|c+1} + M_{b,a,c|c+1}\\ + M_{a,c,b|c+1} + M_{a,b,c|c+1}, \\
                \mathrm{Pr}_\lambda (P_\mu \otimes V) = 
                  M_{c,b,a|c} + M_{c+1,b,a|c+1} + M_{b,c,a|c} \\
                  M_{b,c+1,a|c+1} + M_{c,a,b|c} + M_{c+1,a,b|c+1} \\
                  M_{b,a,c|c} + M_{b,a,c+1|c+1} + M_{a,c,b|c} \\
                  M_{a,c+1,b|c+1} + M_{a,b,c|c} + M_{a,b,c+1|c+1}.
                \end{aligned}
                \end{equation*}          
            \item $a = b$. \\ This is a repeat of an earlier case.       
           
        \end{enumerate}    
        \item $b = c+1$.
        \begin{enumerate}[label=\theenumi.\theenumii.\arabic*)]
            \item $a > c+1$. \\  Use $P_\mu$ as indicated below and proceed with the strategy in \cref{strategy} to see $P_\lambda$ is the only direct summand in the projection. 
                \begin{equation*}
                \begin{aligned}          
                P_\mu = M_{c+1,c+1,a|c} + M_{c+1,a,c+1|c} + M_{a,c+1,c+1}, \\
                \mathrm{Pr}_\lambda (P_\mu \otimes V^*) = 
                  M_{c,c+1,a|c} + M_{c+1,c,a|c} + M_{c+1,c+1,a|c+1} \\
                  M_{c,a,c+1|c} + M_{c+1,a,c|c} + M_{c+1,a,c+1|c+1} \\
                  M_{a,c,c+1|c} + M_{a,c+1,c|c} + M_{a,c+1,c+1|c+1}.
                \end{aligned}
                \end{equation*}         
            \item $a = c+1$. \\ This is the repeat of an earlier case.

        \end{enumerate}

    \end{enumerate}

\item $b = c$. \\ This is a repeat of an earlier case.

\item $b < c$.
    \begin{enumerate}[label=\theenumi.\arabic*), ref = \arabic*]
        \item $a > c$.
        \begin{enumerate}[label=\theenumi.\theenumii.\arabic*)]
            \item $a > c+1$. \\ Use $P_\mu$ as indicated below and proceed with the strategy in \cref{strategy} to see $P_\lambda$ is the only direct summand in the projection.  
                \begin{equation*}
                \begin{aligned}
                P_\mu = M_{c,b,a|c+1} + M_{c,a,b|c+1} + M_{a,b,c|c+1} + M_{a,b,c|c+1} + M_{a,c,b|c+1}, \\
                \mathrm{Pr}_\lambda (P_\mu \otimes V) = 
                  M_{c,b,a|c} + M_{c+1,b,a|c+1} + M_{c,a,b|c} + M_{c+1,a,b|c+1} \\
                  M_{a,b,c|c} + M_{a,b,c+1|c+1} + M_{a,c,b|c} + M_{a,c+1,b|c+1}.
                \end{aligned}
                \end{equation*}  
                
            \item $a = c+1$. \\  Use $P_\mu$ as indicated below and proceed with the strategy in \cref{strategy} to see $P_\lambda$ is the only direct summand in the projection.  
                \begin{equation*}
                \begin{aligned}
                P_\mu = M_{c+1,b,c+1|c} + M_{c+1,c+1,b|c}, \\
                \mathrm{Pr}_\lambda (P_\mu \otimes V^*) = 
                  M_{c,b,c+1|c} + M_{c,b,c|c} + M_{c+1,b,c+1|c+1} \\
                  M_{c,c+1,b|c} + M_{c,b,c|c} + M_{c+1,c+1,b|c+1}.
                \end{aligned}
                \end{equation*}              
           
        \end{enumerate}    
        \item $a = c$. \\ This is a repeat of an earlier case.
        \item $a < c$
        \begin{enumerate}[label=\theenumi.\theenumii.\arabic*)]
            \item $a > b$. \\  Use $P_\mu$ as indicated below and proceed with the strategy in \cref{strategy} to see $P_\lambda$ is the only direct summand in the projection. 
                \begin{equation*}
                \begin{aligned}
                P_\mu = M_{c,b,a|c+1} + M_{c,a,b|c+1}, \\                
                \mathrm{Pr}_\lambda (P_\mu \otimes V) = 
                  M_{c,b,a|c} + M_{c+1,b,a|c+1} + M_{c,a,b|c} + M_{c+1,a,b,c+1}.
                \end{aligned}
                \end{equation*}
                
            \item $a = b$. \\ This is a repeat of an earlier case.

        \end{enumerate}

    \end{enumerate}

\end{enumerate}

\section{Character Formulae for $\gl(2|2)$}\label{sec6}
In this section, we determine Verma multiplicities for standard filtration formulae for projective covers of simple modules of $\gl(2|2)$ with integral, atypical weight of degree $2$.
\subsection{Results}
Let $\g = \gl(2|2)$ have the standard choices of Cartan subalgebra, bilinear form, root system, positive, and fundamental system as described in \cref{prelims}.  Recall the notation described in \cref{link} to describe a weight in $\h^*$. Lastly, recall Example \ref{ex:gl22} and the corresponding block $\Bl_{0}$ (see \cref{blocks}). We have the following Theorems \ref{gl221} and \ref{gl222} that describe standard filtrations of projectives in this block.
\begin{thm}\label{gl221}
Let $a,b \in \mathbb{Z}$ with $a \geq b$. We have the following Verma flag formulae for the projective objects $P_{a, b|b, a}$ and $P_{a, b|a, b}$ in $\Bl_0$:
\begin{enumerate}[label=(\arabic*), ref=(\arabic*]
\item Case: $P_{a,b|b,a}$
\begin{enumerate}[label=\theenumi.\arabic*), ref = \arabic*]
    \item If $b < a - 1$, then
        \begin{equation*}
        \begin{aligned}            
        P_{a, b|b, a} = 
          M_{a,b|b,a} + M_{a+1,b|b,a+1} + M_{a,b+1|b+1,a} + M_{a+1,b+1|b+1,a+1}.
        \end{aligned}
        \end{equation*}
    \item If $b = a - 1$, then
        \begin{equation*}
        \begin{aligned}            
        P_{a,a-1|a-1,a}= 
          M_{a,a-1|a-1,a} + M_{a+1,a-1|a-1,a+1} + M_{a,a|a,a} \\
        + M_{a,a+1|a,a+1} + 2M_{a+1,a|a,a+1} + M_{a+1,a|a+1,a} \\
        + M_{a+1,a+1|a+1,a+1} + M_{a+2,a|a,a+2} + M_{a+2,a+1|a+1,a+2}.
        \end{aligned}
        \end{equation*}        
    \item If $b = a$, then
        \begin{equation*}
        \begin{aligned}      
        P_{a,a|a,a}= 
          M_{a,a|a,a} + M_{a+1,a|a,a+1} + M_{a,a+1|a+1,a} \\
        + M_{a+1,a+1|a+1,a+1} + M_{a+1,a|a+1,a} + M_{a,a+1|a,a+1}.
        \end{aligned} 
        \end{equation*}
\end{enumerate}
\item Case: $P_{a,b|a,b}$
\begin{enumerate}[label=\theenumi.\arabic*), ref = \arabic*]
    \item If $b < a - 1$, then
        \begin{equation*}
        \begin{aligned}            
        P_{a,b|a,b}= 
          M_{a,b|a,b} + M_{a+1,b|a+1,b} + M_{a,b+1|a,b+1} + M_{a+1,b+1|a+1,b+1} \\
        + M_{a,b|b,a} + M_{a+1,b|b,a+1} + M_{a,b+1|b+1,a} + M_{a+1,b+1|b+1,a+1}.
        \end{aligned}
        \end{equation*} 
    \item If $b = a - 1$, then
        \begin{equation*}
        \begin{aligned}            
        P_{a,a-1|a,a-1}  = 
          M_{a,a-1|a,a-1} + M_{a+1,a|a,a+1} + M_{a+1,a|a+1,a} + M_{a+1,a-1|a-1,a+1} \\
        + M_{a+1,a-1|a+1,a-1} + M_{a,a|a,a} + M_{a,a-1|a-1,a}.
        \end{aligned}
        \end{equation*}       
\end{enumerate}
\end{enumerate}
\end{thm}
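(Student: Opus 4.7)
The proof proceeds case by case via the translation-functor strategy of \cref{strategy}. For each target weight $\lambda$, I choose (i) a source weight $\mu$ whose projective $P_\mu$ has a known standard filtration — typically with $\mu - \rho$ typical so that \cref{typAndDom} applies and $P_\mu = M_\mu$ — and (ii) a finite-dimensional representation $W$, drawn from the natural module $V$, its dual $V^*$, and the exterior squares $\extp^2 V$ and $\extp^2 V^*$, chosen so that the weights of $W$ shift $\mu$ into the atypicality-$2$ block $\Bl_0$ containing $\lambda$. \cref{sum} then gives the standard filtration of $P_\mu \otimes W$, and its projection $\mathrm{Pr}_\lambda(P_\mu \otimes W)$ onto $\Bl_0$ contains $P_\lambda$ as a direct summand whenever $\lambda$ is minimum (in the Bruhat order) among all linked weights appearing in the projection. \cref{filprop}, applied with the two mutually orthogonal positive isotropic roots $\alpha_1, \alpha_2$ annihilating $\lambda$ under the form $(\cdot, \cdot)$, then forces specific Vermas into the flag of $P_\lambda$, while \cref{corlen} plus length counts rule out additional summands in the projection.

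For the generic branch $b < a-1$ of Part (1), take $\mu = (a, b \,|\, b+1, a+1)$; the inequality $b < a-1$ keeps $\mu - \rho$ typical, so $P_\mu = M_\mu$. Tensoring with $W = \extp^2 V$ and retaining only the atypicality-$2$ Vermas leaves exactly the four terms listed in Part (1.1), with $\lambda = (a, b \,|\, b, a)$ minimum. Choosing $\alpha_1 = \delta_1 - \epsilon_2$ and $\alpha_2 = \delta_2 - \epsilon_1$ — both orthogonal to $\lambda$ and to each other — \cref{filprop}(3) forces $M_{\lambda + \alpha_i}$ into the Verma flag of $P_\lambda$ for each $i$, and part (5) (using $\mathrm{ht}(\alpha_2) = 1 < 3 = \mathrm{ht}(\alpha_1)$ and the orthogonality $(\lambda + \alpha_2, \alpha_1) = 0$) places $M_{\lambda + \alpha_1 + \alpha_2}$. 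This accounts for the entire projection, so $P_\lambda$ equals the projection. The generic branch in Part (2), with $\lambda = (a, b \,|\, a, b)$, is handled analogously with a typical shift such as $\mu = (a-1, b-1 \,|\, a, b)$ tensored with $\extp^2 V$; the orthogonal isotropic roots annihilating $\lambda$ are now $\delta_1 - \epsilon_1$ and $\delta_2 - \epsilon_2$, and the eight listed Vermas fill the projection.

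The boundary cases $b = a$ (Part 1.3) and $b = a-1$ (Parts 1.2 and 2.2) are more delicate, because the naive typical shift lands on a wall and the projection may lose terms or fail to have $\lambda$ minimum. Parts (1.3) and (2.2) can each be handled by a single refined translation — for instance $\mu = (a+1, a-1 \,|\, a, a)$ tensored with $\extp^2 V^*$ for Part (1.3) — which produces all listed Vermas, after which \cref{filprop} verifies each term using the two orthogonal isotropic roots available at the wall weight. The subtlest case is Part (1.2), where $\lambda = (a, a-1 \,|\, a-1, a)$ and the flag has length ten with the Verma $M_{a+1, a \,|\, a, a+1}$ of multiplicity two. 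Here no typical-$\mu$ translation by itself produces the full filtration, so — following the two-translation comparison used to prove Case (1.2.2) of \cref{gl312} — I combine two pathways: one starts from the already-established Part (1.1) formula applied to the shifted weight $(a+1, a-1 \,|\, a-1, a+1)$ tensored with $V^*$, and the other from a typical $\mu$ tensored with $\extp^2 V$. Each projection contains $P_\lambda$ as a common direct summand, and the multiplicity-two factor is forced by comparing the Verma multisets from the two projections.

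The principal obstacle is confirming the multiplicity-two factor in Part (1.2) without admitting spurious indecomposable summands. The argument mirrors the proof of the analogous $\gl(3|1)$ boundary case: one projection bounds the length of $P_\lambda$ from above (by exhibiting each of the needed Vermas with multiplicity two), while a second projection realizes each Verma with the correct multiplicity. Together with \cref{corlen}, these two bounds pin $P_\lambda$ down uniquely. Carrying out this comparison rigorously requires careful bookkeeping of atypicality degrees and Bruhat positions across both projections, together with repeated application of \cref{filprop}(3)–(6) to certify the Vermas forced into $P_\lambda$.
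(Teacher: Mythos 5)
Your general framework and the handling of the generic branches (Parts 1.1 and 2.1) match the paper's translation-functor strategy: for Part 1.1 the paper also takes $\mu \cong (a,b\,|\,b{+}1,a{+}1)$, tensors with $\extp^2 V$, and uses Proposition~\ref{filprop}(3) and (5) with the roots $\delta_2 - \epsilon_1$ (height $1$) and $\delta_1 - \epsilon_2$ (height $3$), exactly as you describe; Part 2.1 likewise uses a single translation with $\extp^2 V$ from a typical $\mu$ together with conditions (1), (3), (4) and a length-counting argument.

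However, your proposed translations for the boundary cases break down concretely. For Part 1.3 you propose $\mu \cong (a{+}1,a{-}1\,|\,a,a)$ tensored with $\extp^2 V^*$; but $\lambda - \mu = -\delta_1 + \delta_2$, which is not a weight of $\extp^2 V^*$ (all of whose weights are negative), so no weight of $\extp^2 V^*$ shifts $\mu$ into the degree-$2$ block at all, and indeed every term in $\mathrm{Pr}_\lambda(M_\mu \otimes \extp^2 V^*)$ has atypicality at most $1$. The paper instead takes $\mu := \lambda - (\epsilon_1 + \epsilon_2) \cong (a,a\,|\,a{+}1,a{+}1)$ and tensors with $\extp^2 V$, which is dominant and typical so $P_\mu = M_\mu$, and the projection then has exactly the six listed terms. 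For Part 1.2 your first proposed pathway, starting from $P_{a{+}1,a{-}1\,|\,a{-}1,a{+}1}$ and tensoring with $V^*$, is similarly blocked: $\lambda - (a{+}1,a{-}1\,|\,a{-}1,a{+}1) = -\delta_1 + \epsilon_2$, which is not a weight of $V^*$ (nor of $\extp^2 V^*$), so this translation never reaches the block of $\lambda$. More importantly, you miss the paper's key device for Case 1.2: it cannot find a typical source $\mu$ hitting $\lambda$ directly, so it \emph{bootstraps through a degree-$1$ atypical intermediate} $\mu := \lambda - 2\epsilon_2 \cong (a,a{-}1\,|\,a{-}1,a{+}2)$, determining $P_\mu$ first by translating from the typical $\theta := \mu - 2\epsilon_1$ via $\extp^2 V$, and only then translating $P_\mu$ by $\extp^2 V$ to reach $\lambda$. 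The resulting ten-term projection is then pinned down not by comparing two projections (that is the device used in Theorem~\ref{gl312} Case 1.2.2 of the $\gl(3|1)$ analysis, not here) but by a single counting argument: $6$ terms are forced by Proposition~\ref{filprop}, the second copy of $M_{a{+}1,a\,|\,a,a{+}1}$ cannot split off (it would require at least $6$ further terms, exceeding the total of $10$), and the remaining three terms can neither form a projective (two are Bruhat-incomparable and the third is above both) nor split as two atypical projectives (by Corollary~\ref{corlen}). Your proposal should replace the two-translation comparison with this bootstrap-plus-counting argument, or else exhibit a second translation that genuinely lands in $\Bl_0$.
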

\begin{thm}\label{gl222}
Let $a,b \in \mathbb{Z}$ with $a \geq b$. We have the following Verma flag formulae for the projective objects $P_{b, a|b, a}$ and $P_{b, a|a, b}$ in $\Bl_0$. 
\begin{enumerate}[label=(\arabic*), ref=(\arabic*]
\item Case: $P_{b,a|b,a}$
\begin{enumerate}[label=\theenumi.\arabic*), ref = \arabic*]
    \item If $b < a - 1$, then
        \begin{equation*}
        \begin{aligned}           
        P_{b,a|b,a} = 
           M_{b,a|b,a} + M_{b,a+1|b,a+1} + M_{b+1,a|b+1,a} + M_{b+1,a+1|b+1,a+1} \\
         + M_{a,b|b,a} + M_{a+1,b|b,a+1} + M_{a,b+1|b+1,a} + M_{a+1,b+1|b+1,a+1}
        \end{aligned}
        \end{equation*}  
    \item If $b = a - 1$, then
        \begin{equation*}
        \begin{aligned}            
        P_{a-1,a|a-1,a} = 
          M_{a-1,a|a-1,a} + M_{a,a-1|a-1,a} + M_{a,a|a,a} \\
        + M_{a-1,a+1|a-1,a+1} + M_{a,a+1|a,a+1} + M_{a+1,a-1|a-1,a+1} \\
        + M_{a+1,a|a,a+1}.
        \end{aligned}
        \end{equation*}       
\end{enumerate}
\item Case: $P_{b,a|a,b}$
\begin{enumerate}[label=\theenumi.\arabic*), ref = \arabic*]
    \item If $b < a - 1$, then
        \begin{equation*}
        \begin{aligned}            
        P_{b,a|a,b}  = 
           M_{b,a|a,b} + M_{b,a+1|a+1,b} + M_{b+1,a|a,b+1} + M_{b+1,a+1|a+1,b+1} \\
         + M_{b,a|b,a} + M_{b,a+1|b,a+1} + M_{b+1,a|b+1,a} + M_{b+1,a+1|b+1,a+1} \\   
         + M_{a,b|a,b} + M_{a+1,b|a+1,b} + M_{a,b+1|a,b+1} + M_{a+1,b+1|a+1,b+1} \\ 
         + M_{a,b|b,a} + M_{a+1,b|b,a+1} + M_{a,b+1|b+1,a} + M_{a+1,b+1|b+1,a+1}.
        \end{aligned}
        \end{equation*} 
    \item If $b = a - 1$, then
        \begin{equation*}
        \begin{aligned}            
        P_{a-1,a|a,a-1}= 
          M_{a-1,a|a,a-1} + M_{a,a+1|a,a+1} + M_{a,a+1|a+1,a} + M_{a-1,a+1|a+1,a-1} \\
        + M_{a-1,a+1|a-1,a+1} + M_{a-1,a|a-1,a} + M_{a,a-1|a,a-1} + M_{a+1,a|a,a+1} \\
        + M_{a+1,a|a+1,a} + M_{a+1,a-1|a-1,a+1} + M_{a+1,a-1|a+1,a-1} + 2M_{a,a|a,a} \\
        + M_{a,a-1|a-1,a}.
        \end{aligned}
        \end{equation*}       
\end{enumerate}
\end{enumerate}
\end{thm}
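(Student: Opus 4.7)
The plan is to apply the translation functor strategy of \cref{strategy}, in the same spirit as the proofs of Theorems \ref{gl311}--\ref{gl316} and the companion Theorem \ref{gl221}. For each sub-case I would pick a typical weight $\mu$, read off the Verma flag of $P_\mu$ from Lemma \ref{typAndDom}, tensor $P_\mu$ with a suitable finite-dimensional representation $W \in \{V, V^*, \extp^2 V, \extp^2 V^*\}$, compute the filtration of $P_\mu \otimes W$ via Proposition \ref{sum}, and project onto $\Bl_0$. Since $\lambda$ will be the lowest weight in the projected filtration, $P_\lambda$ appears as a direct summand.

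For case (1.1), where $\lambda \cong (b, a \mid b, a)$ with $b < a-1$, the weight $\mu \cong (b, a \mid b+1, a+1)$ is typical, and Lemma \ref{typAndDom} gives $P_\mu = M_{b,a\mid b+1,a+1} + M_{a,b\mid b+1,a+1}$. Tensoring with $\extp^2 V$ and projecting onto $\Bl_0$ should yield exactly the eight listed Vermas, since the six shifts needed from $\mu$'s two summands to the target weights are precisely six of the eight weights of $\extp^2 V$. Analogous single- or double-tensor choices, often with $\mu$ typical but sometimes (in the more degenerate subcases) starting from an already-known atypical projective from Theorem \ref{gl221}, handle (1.2), (2.1), and (2.2). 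After each computation, Proposition \ref{filprop} is invoked to verify that each listed Verma appears in the flag of $P_\lambda$, and then Corollary \ref{corlen} together with a length count rules out additional direct summands.

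The main obstacle is case (2.2), the projective $P_{a-1,a\mid a,a-1}$, whose standard filtration contains $M_{a,a\mid a,a}$ with multiplicity two. A single translation cannot distinguish one copy from two, as already illustrated in the proof of Theorem \ref{gl312} case (1.2.2) for $\gl(3|1)$. My plan is to compute two independent translations: one starting from a typical $\mu$ via $\extp^2 V$ or $\extp^2 V^*$, and a second starting from a nearby atypical $\theta$ whose projective is already recorded in Theorem \ref{gl221}, tensored with $V$ or $V^*$. Both projections contain $P_{a-1,a\mid a,a-1}$ as a direct summand; matching their Verma multiplicities pins down the precise decomposition and in particular forces the factor $2M_{a,a\mid a,a}$.

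The remaining work is bookkeeping: checking atypicality and Bruhat dominance to apply Lemma \ref{typAndDom}, enumerating tensor weights via Proposition \ref{sum}, and verifying the hypotheses of Proposition \ref{filprop} parts \ref{1}--\ref{6} for each Verma in the claimed flag. These steps are mechanical once $\mu$ and $W$ are fixed, so the genuine difficulty lies in choosing those parameters correctly in the degenerate case (2.2) and in carefully managing the sixteen Vermas of case (2.1).
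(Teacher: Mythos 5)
Your plan matches the paper on cases (1.1) and (2.1), but there are two concrete issues in the harder subcases.

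First, your claim that ``a single translation cannot distinguish one copy from two'' is not correct, and it misdiagnoses the difficulty in case (2.2). The paper handles $P_{a-1,a\mid a,a-1}$ with a \emph{single} translation: it takes the typical weight $\mu = \lambda - (\epsilon_1 + 2\epsilon_2) \cong (a-1,a\mid a+1,a+1)$, so $P_\mu = M_{a-1,a\mid a+1,a+1}+M_{a,a-1\mid a+1,a+1}$, and tensors with $\extp^3 V$ (not $\extp^2 V$ or $\extp^2 V^*$). The projection onto $\Bl_0$ already has $13$ terms including $2M_{a,a\mid a,a}$; applying Proposition \ref{filprop} forces $12$ of these (including one copy of $M_{a,a\mid a,a}$) into the flag of $P_\lambda$, and Corollary \ref{corlen} rules out the single remaining term forming its own projective, so the second copy of $M_{a,a\mid a,a}$ must also be present. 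No second translation is needed. Moreover, your specific suggestion to use $\extp^2 V$ or $\extp^2 V^*$ here runs into trouble: the shift of $\lambda\cong(a-1,a\mid a,a-1)$ by the lowest weight of either representation lands on an atypical weight, and if one instead picks a non-lowest weight making $\mu$ typical (e.g.\ $\mu = \lambda - (\delta_1+\epsilon_1)$ for $\extp^2 V$), the projection onto $\Bl_0$ picks up Vermas such as $M_{a-2,a\mid a,a-2}$ that are \emph{incomparable} to $\lambda$ in the Bruhat order, so $P_\lambda$ is no longer the unique minimal summand and the projection does not isolate it. This is precisely why the paper reaches for $\extp^3 V$.

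Second, for case (1.2), $\lambda \cong (a-1,a\mid a-1,a)$, your plan to bootstrap from ``an already-known atypical projective from Theorem \ref{gl221}'' will not go through as stated. The paper's intermediate weight is $\mu = \lambda + \delta_1 \cong (a,a\mid a-1,a)$, which is atypical of degree $1$ and hence lies \emph{outside} $\Bl_0$; Theorem \ref{gl221} says nothing about it. The paper must first compute $P_\mu$ from scratch using a typical $\theta \cong (a+1,a+1\mid a-1,a)$ and $\extp^2 V^*$, then translate again by $V^*$. You would need this extra step as well. The two-translation idea you describe is the right general mechanism (it is how the paper handles $\gl(3|1)$ Theorem \ref{gl312} case (1.2.2)), but the specific sources and representations you name for Theorem \ref{gl222} cases (1.2) and (2.2) would need to be changed for the argument to close.
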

\subsection{Proof}
In this subsection, we prove Theorem \ref{gl221} and Theorem \ref{gl222}. We use the method of translation functors by effecting certain representations. Here are the weights of these representations.
\\
Dual $V^*$:
        \begin{equation*}
        \begin{aligned}            
        \{-\delta_1, \ -\delta_2, \ -\epsilon_1, \ -\epsilon_2\}
        \end{aligned}
        \end{equation*}
Wedge-squared of the natural $\extp^2 V$:
        \begin{equation*}
        \begin{aligned}            
        \{\delta_1 + \delta_2, \ \delta_1 + \epsilon_1, \ \delta_1 + \epsilon_2, \ \delta_2 + \epsilon_1, \\ \delta_2 + \epsilon_2, \ 2\epsilon_1, \ \epsilon_1 + \epsilon_2,  \ 2\epsilon_2\}
        \end{aligned}
        \end{equation*}
Wedge-squared of the dual $\extp^2 V^*$:
        \begin{equation*}
        \begin{aligned}            
        \{-\delta_1 - \delta_2, \ -\delta_1 - \epsilon_1, \ -\delta_1 - \epsilon_2, \ -\delta_2 - \epsilon_1, \\ -\delta_2 - \epsilon_2, \ -2\epsilon_1, \ -\epsilon_1 - \epsilon_2,  \ -2\epsilon_2\}
        \end{aligned}
        \end{equation*}
Wedge-cubed of the natural $\extp^3 V$:
        \begin{equation*}
        \begin{aligned}            
        \{\delta_1 + \delta_2 + \epsilon_1, \ \delta_1 + \delta_2 + \epsilon_2, \ \delta_1 + 2\epsilon_1, \\ 
        \delta_1 + \epsilon_1 + \epsilon_2, \ \delta_1 + 2\epsilon_2, \ \delta_2 + 2\epsilon_1, \ \delta_2 + \epsilon_1 + \epsilon_2, \\
        \delta_2 + 2\epsilon_2, \ 3\epsilon_1, \ 2\epsilon_1 + \epsilon_2, \ \epsilon_1 + 2\epsilon_2, \ 3\epsilon_2\}
        \end{aligned}
        \end{equation*}
We now offer justification for the formulae above, separated into cases that have different formulae, based on the strategy in \cref{strategy}. Our proof be more explicit in the earlier cases and edge cases; those which lack much explanation follow the strategy almost directly.

\subsubsection{Case: $P_\lambda$,  $\lambda \cong (a,b\ |\ b,a)$}
\begin{enumerate}
    \item $b < a - 1$. \\
    Let $\mu  \coloneqq \lambda - (\epsilon_1 + \epsilon_2)$, so that $\mu \cong (a, b \ | \ b+1, a+1)$. Note that $\epsilon_1 + \epsilon_2$ is not the lowest weight of $\extp^2 V$ as prescribed by the strategy; this shall be justified below. First we claim: 
        \begin{equation*}
        \begin{aligned}       
        P_{\mu} = M_{a,b|b+1,a+1}, \\     
        P \coloneqq \mathrm{Pr}_\lambda(P_{\mu}\otimes \extp^2 V) = 
          M_{a,b|b,a} + M_{a+1,b|b,a+1} + M_{a,b+1|b+1,a} \\ + M_{a+1,b+1|b+1,a+1}.
        \end{aligned}
        \end{equation*}
\hypertarget{$\star$}{Observe that $\mu$ is dominant and typical, so by Lemma \ref{typAndDom}, $P_{\mu}  = M_{a,b|b+1,a+1}$. Ttensoring $P_{\mu}$ with the wedge-squared of the natural representation produces another projective object $T$ in $\OO$. By choice of $\mu$ and representation, $M_{a,b|b,a}$ appears in the standard filtration of $T$ by Proposition \ref{sum}}.
\par
Because each indecomposable object lives entirely in a single block in $\OO$, we can project $T$ onto $\Bl_\lambda$ in $\OO$ to produce another projective $P$. Now, because $\lambda$ is the lowest weight appearing in the Verma flag of $P$, $P_{\lambda}$ is a direct summand of $P$.  
\\\\     
We now deduce which other Verma modules must appear in the filtration by the conditions in Proposition \ref{filprop}. The module $M_{a+1,b|b,a+1}$ and $M_{a,b+1|b+1,a}$ appear by condition \ref{3}. If the remaining module $M_{a+1,b+1|b+1,a+1}$ were to split off as a separate projective summand, Corollary \ref{corlen} shows us such a projective would have a filtration length of at least two. Since there are no other modules remaining, we deduce that this last module must also be present in the filtration and that there is only one summand present in the projection.

    \item $b = a - 1$. \\ There appears no obvious choice of typical $\mu$ such that tensoring one of the representations above is effective. As a result, we allow $\mu$ to be atypical. This requires us to use a suitable translation functor to first determine $P_{\mu}$. \\\\
    Let $\mu \coloneqq \lambda - 2\epsilon_2$, so that $\mu \cong (a,a-1 \ |\ a-1,a+2)$ and $\mu - \rho$ is atypical of degree $1$. Let  $\theta \coloneqq \mu - 2\epsilon_1$ so that $\theta \cong (a, a-1 \ |\ a+1, a+2)$ and $\theta - \rho$ is typical. We have
        \begin{equation*}
        \begin{aligned}            
        P_{\theta} = M_{a,a-1|a+1,a+2}, \\
        \mathrm{Pr}_\mu (P_{\theta} \otimes \extp^2 V) = 
          M_{a,a-1|a-1,a+2} + M_{a,a|a,a+2} + M_{a+1,a|a+1,a+2}.
        \end{aligned}
        \end{equation*}     
  To determine $P_{\mu}$, we first note that because $M_{\mu}$ present above has weight lower than that of the remaining Verma modules, $P_{\mu}$ is a direct summand of $P$. Now, we effect Proposition \ref{filprop} to see which of the remaining Verma modules appear in a Verma flag of $P_{\mu + \rho}$. The module $M_{a,a|a,a+2}$ appears by condition \ref{3}, and $M_{a+1,a|a+1,a+2}$ appears by condition \ref{5}. We deduce that there is only one summand in the projection.
        \begin{equation*}
        \begin{aligned}            
        P_{\mu} =  M_{a,a-1|a-1,a+2} + M_{a,a|a,a+2} + M_{a+1,a|a+1,a+2}.
        \end{aligned}
        \end{equation*}     
    Now, we argue a formula for $P_{\lambda}$ by proceeding as usual. We have
        \begin{equation*}
        \begin{aligned}            
        \mathrm{Pr}_\lambda (P_{\mu} \otimes \extp^2 V) = 
          M_{a,a-1|a-1,a} + M_{a+1,a-1|a-1,a+1} + M_{a,a|a,a} \\
        + M_{a,a+1|a,a+1} + 2M_{a+1,a|a,a+1} + M_{a+1,a|a+1,a} \\
        + M_{a+1,a+1|a+1,a+1} + M_{a+2,a|a,a+2} + M_{a+2,a+1|a+1,a+2}.
        \end{aligned}
        \end{equation*}    
    The presence of $M_\lambda = M_{a,a-1|a-1}$ in the projection and because it is of lowest weight tells us that $P_\lambda$ is a direct summand. Now, we determine which of the remaining Verma modules must appear in a filtration for $P_\lambda$ by using Proposition \ref{filprop}. $ M_{a+1,a-1|a-1,a+1}$ and $M_{a,a|a,a}$ appear by condition \ref{3}. $M_{a,a+1|a,a+1}$, one copy of $M_{a+1,a|a,a+1}$, and $M_{a+1,a|a+1,a}$ appear by condition \ref{5}. Because we have enumerated at least $6$ terms here, $P_\lambda$ has a filtration length of at least $6$.
    \\\\
    Of the remaining four terms, observe that the module of lowest weight appearing is the second copy of $M_{a+1,a|a,a+1}$. This is actually of the same class $(a,a-1\ |\ a-1,a)$, and so if it yielded another projective as a direct summand, there would be at least $12 = 6 + 6$ terms appearing. Since there are only ten terms, the second copy of $M_{a+1,a|a,a+1}$ also appears.
    \\\\
    There are three terms remaining: $M_{a+1,a+1|a+1,a+1}$, $M_{a+2,a|a,a+2}$, and $M_{a+2,a+1|a+1,a+2}$. The weights of the first two are incomparable, and the third is higher than both. Therefore, these three modules cannot form a projective. But three modules cannot also be a direct sum of two or more projectives of atypical weight by a simple application of Corollary \ref{corlen}. We deduce that all three of these modules are also present in the filtration and that there is only one summand in the projection.
    
    \item $b = a$. \\ 
    Let $\mu \coloneqq \lambda - (\epsilon_1 + \epsilon_2)$. We have
        \begin{equation*}
        \begin{aligned} 
        P_\mu = M_{a,a|a+1,a+1}, \\           
        \mathrm{Pr}_\lambda (P_\mu \otimes \extp^2 V) = 
          M_{a,a|a,a} + M_{a+1,a|a,a+1} + M_{a,a+1|a+1,a} \\
        + M_{a+1,a+1|a+1,a+1} + M_{a+1,a|a+1,a} + M_{a,a+1|a,a+1}.
        \end{aligned}
        \end{equation*} 
    The presence of $M_{a,a|a,a}$ as the module with lowest weight implies that $P_\lambda$ is a direct summand of the projection. To determine which other modules appear in a Verma flag for $P_\lambda$, we use Proposition \ref{filprop}. The modules $M_{a+1,a|a,a+1}$, $M_{a,a+1|a+1,a}$, $M_{a+1,a|a+1,a}$, and $M_{a,a+1|a,a+1}$ all appear by condition \ref{3} of the proposition. This forces the remaining module $M_{a+1,a+1|a+1,a+1}$ to also appear by Corollary \ref{corlen}. We deduce that there is only one direct summand.    
\end{enumerate}

\subsubsection{Case $P_\lambda, \ \lambda(a,b\ |\ a,b)$}
\begin{enumerate}
    \item $b < a - 1$. \\
    Let $\mu \coloneqq \lambda - (\epsilon_1 + \epsilon_2)$. We have
        \begin{equation*}
        \begin{aligned}   
        P_\mu = M_{a,b|b+1,a+1} + M_{a,b|a+1,b+1}, \\         
        \mathrm{Pr}_\lambda (P_\mu \otimes \extp^2 V) = 
          M_{a,b|a,b} + M_{a+1,b|a+1,b} + M_{a,b+1|a,b+1} + M_{a+1,b+1|a+1,b+1} \\
        + M_{a,b|b,a} + M_{a+1,b|b,a+1} + M_{a,b+1|b+1,a} + M_{a+1,b+1|b+1,a+1}.
        \end{aligned}
        \end{equation*}
        The presence of $M_{a,b|a,b}$, which has lowest weight in the projection, implies $P_\lambda$ is a direct summand. As before, we now apply Proposition \ref{filprop} to see which modules appear in the standard filtration. Modules $M_{a+1,b|a+1,b}$ and $M_{a,b+1|a,b+1}$ appear by condition \ref{3}. The module $M_{a,b|b,a}$ appears by condition \ref{1}. The modules $M_{a+1,b|b,a+1}$ and $M_{a,b+1|b+1,a}$ appear by condition \ref{4}. We have $6$ terms so far. The modules $M_{a+1,b+1|a+1,b+1}$ and $M_{a+1,b+1|b+1,a+1}$, are the two remaining terms, with the former of lower weight.  But observe that its weight is in the same class $(a,b|a,b)$ with $b < a-1$. So if we were to have another projective, we would have at least $12 = 6 + 6$ terms. There are only $8$ terms, so the module $M_{a+1,b+1|a+1,b+1}$ must also appear. It follows $M_{a+1,b+1|b+1,a+1}$ must appear by Corollary \ref{corlen}. We deduce that there is only one direct summand.
        
    \item $b = a - 1$. \\ 
    Let $\mu \coloneqq \lambda - (\epsilon_1 + 2\epsilon_2)$. We have
        \begin{equation*}
        \begin{aligned}           
        P_\mu = M_{a,a-1|a+1,a+1}, \\ 
        \mathrm{Pr}_\lambda (P_\mu \otimes \extp^3 V) = 
          M_{a,a-1|a,a-1} + M_{a+1,a|a,a+1} + M_{a+1,a|a+1,a} + M_{a+1,a-1|a-1,a+1} \\
        + M_{a+1,a-1|a+1,a-1} + M_{a,a|a,a} + M_{a,a-1|a-1,a}.
        \end{aligned}
        \end{equation*} 
        The presence of $M_{a,a-1|a,a-1}$, which has lowest weight in the projection, implies $P_\lambda$ is a direct summand. As before, we apply Proposition \ref{filprop} to see which modules appear in the standard filtration. The module $M_{a+1,a|a,a+1}$ must appear by condition \ref{5}. The module $M_{a+1,a-1|a-1,a+1}$ appears by condition \ref{4}. The modules $M_{a+1,a-1|a+1,a-1}$ and $M_{a,a|a,a}$ appear by condition \ref{3}. $M_{a,a-1|a-1,a}$ appears by condition \ref{1}. Lastly, $M_{a+1,a|a+1,a}$ must also appear in the standard filtration by Corollary \ref{corlen}. We deduce that there is only one direct summand.

    \item $b = a$. This is a repeat of an earlier case.
\end{enumerate}

\subsubsection{Case $P_\lambda, \ \lambda \cong (b,a\ |\ b,a)$}
\begin{enumerate}
    \item $b < a - 1$.
    Let $\mu \coloneqq \lambda - (\epsilon_1 + \epsilon_2)$. We have
        \begin{equation*}
        \begin{aligned}
        P_\mu = M_{b,a|b+1,a+1} + M_{a,b|b+1,a+1},  \\         
        \mathrm{Pr}_\lambda (P_\mu \otimes \extp^2 V) = 
           M_{b,a|b,a} + M_{b,a+1|b,a+1} + M_{b+1,a|b+1,a} + M_{b+1,a+1|b+1,a+1} \\
         + M_{a,b|b,a} + M_{a+1,b|b,a+1} + M_{a,b+1|b+1,a} + M_{a+1,b+1|b+1,a+1}.
        \end{aligned}
        \end{equation*}         
        The presence of $M_{b,a|b,a}$, which has lowest weight in the projection, implies $P_\lambda$ is a direct summand. We now apply Proposition \ref{filprop} to deduce which of the other modules appear in the filtration. The modules $M_{b,a+1|b,a+1}$ and $M_{b+1,a|b+1,a}$ appear by condition \ref{3}. The module $M_{a,b|b,a}$ appears by condition \ref{2}. The modules $M_{a+1,b|b,a+1}$ and $M_{a,b+1|b+1,a}$ appear by condition \ref{4}. We have $6$ terms so far. The modules $M_{b+1,a+1|b+1,a+1}$ and $M_{a+1,b+1|b+1,a+1}$, are the two remaining terms, with the former of lower weight. Observe that this weight is of the same class $(b,a|b,a)$ with $b < a-1$. So if this were to yield another projective, we would have at least $12 = 6 + 6$ terms. So $M_{b+1,a+1|b+1,a+1}$ must appear. It follows $M_{a+1,b+1|b+1,a+1}$ must appear by Corollary \ref{corlen}. We deduce that there is only one direct summand.
                
    \item $b = a - 1$. \\ 
    There appears no obvious choice of typical $\mu$ such that tensoring one of the representations above is effective. As a result, we allow $\mu$ to be atypical. This requires us to use a suitable translation functor to first determine $P_\mu$. \\\\
    Let $\mu \coloneqq \lambda - (-\delta_1)$, so that $\mu \cong (a, a | a-1, a)$ and $\mu - \rho$ is atypical of degree $1$. Let $\theta \coloneqq \mu - (-\delta_1 - \delta_2)$, so $\theta \cong (a+1, a+1 |a-1, a)$ and $\theta - \rho$ is typical. We have the following:
        \begin{equation*}
        \begin{aligned}            
        P_\theta = M_{a+1,a+1 | a-1, a}, \\
        \mathrm{Pr}_\mu (P_\theta \otimes  \extp^2 V^*) = 
          M_{a,a |a-1, a} + M_{a+1, a | a-1, a+1} + M_{a, a+1 |a-1, a+1}.
        \end{aligned}
        \end{equation*}    
        The presence of $M_{a,a |a-1, a}$, which has lowest weight in the projection, indicates that $P_\mu$ is a direct summand. Applying Proposition \ref{filprop} to see which modules are present in the filtration for $P_\mu$, we see that. Modules $M_{a+1, a | a-1, a+1}$ and $M_{a, a+1 |a-1, a+1}$ appear by condition \ref{3}. We deduce that there is only one summand and that
        \begin{equation*}
        \begin{aligned}            
         P_\mu =  M_{a,a |a-1, a} + M_{a+1, a | a-1, a+1} + M_{a, a+1 |a-1, a+1}.
        \end{aligned}
        \end{equation*}           
        Now, we argue a formula for $P_\lambda$. We have
        \begin{equation*}
        \begin{aligned}            
        \mathrm{Pr}_\lambda (P_\mu \otimes  V^*) = 
          M_{a-1,a|a-1,a} + M_{a,a-1|a-1,a} + M_{a,a|a,a} \\
        + M_{a-1,a+1|a-1,a+1} + M_{a,a+1|a,a+1} + M_{a+1,a-1|a-1,a+1} \\
        + M_{a+1,a|a,a+1}.
        \end{aligned}
        \end{equation*}
    The presence of $M_{a-1,a |a-1, a}$, which has lowest weight in the projection, indicates that $P_\lambda$ is a direct summand. We proceed as before with Proposition \ref{filprop} to see which modules appear in the Verma flag. Module $M_{a,a-1|a-1,a}$ appears by condition \ref{1}. Modules $M_{a,a|a,a}$ and $M_{a-1,a+1|a-1,a+1}$ appear by condition \ref{3}. $M_{a+1,a-1|a-1,a+1}$ appears by condition \ref{4}, and $M_{a+1,a|a,a+1}$ appears by condition \ref{5}.  It follows $M_{a,a+1|a,a+1}$ must appear by Corollary \ref{corlen}. We deduce that there is only one summand.
    
    \item $b = a$. This is a repeat of an earlier case.
   
\end{enumerate}

\subsubsection{Case $P_\lambda, \ \lambda \cong (b,a\ |\ a,b)$}
\begin{enumerate}
    \item $b < a - 1$. \\ 
    Let $\mu \coloneqq \lambda - (\delta_1 + \delta_2)$. We have
        \begin{equation*}
        \begin{aligned}            
         P_\mu = M_{b,a|a+1,b+1} + M_{b,a|b+1,a+1} + M_{a,b|a+1,b+1} + M_{a,b|b+1,a+1}, \\        
        \mathrm{Pr}_\lambda (P_\mu \otimes \extp^2 V) = 
           M_{b,a|a,b} + M_{b,a+1|a+1,b} + M_{b+1,a|a,b+1} + M_{b+1,a+1|a+1,b+1} \\
         + M_{b,a|b,a} + M_{b,a+1|b,a+1} + M_{b+1,a|b+1,a} + M_{b+1,a+1|b+1,a+1} \\   
         + M_{a,b|a,b} + M_{a+1,b|a+1,b} + M_{a,b+1|a,b+1} + M_{a+1,b+1|a+1,b+1} \\ 
         + M_{a,b|b,a} + M_{a+1,b|b,a+1} + M_{a,b+1|b+1,a} + M_{a+1,b+1|b+1,a+1}.
        \end{aligned}
        \end{equation*}    
      The presence of $M_{b,a |a, b}$, which has lowest weight in the projection, indicates that $P_\lambda$ is a direct summand. We proceed as before with Proposition \ref{filprop} to see which modules appear in the Verma flag. Modules $M_{b,a|b,a}$ and $M_{a,b|a,b}$ appear by condition \ref{1}. $M_{a,b|b,a}$ appears by condition \ref{2}. The module $M_{b,a+1|a+1,b}$ appears by condition \ref{3}, and modules $M_{b,a+1|b,a+1}$, $M_{a+1,b|a+1,b}$, and $M_{a+1,b|b,a+1}$ appear by condition \ref{4}. The module $M_{b+1,a|a,b+1}$ appears by condition \ref{3}, and $M_{b+1,a|b+1,a}$, $M_{a,b+1|a,b+1}$, and $M_{a,b+1|b+1,a}$ appear by condition \ref{4}. $M_{b+1,a+1|a+1,b+1}$ appears by condition \ref{5}, and $M_{b+1,a+1|b+1,a+1}$, $M_{a+1,b+1|a+1,b+1}$, and $M_{a+1,b+1|b+1,a+1}$ appear by condition \ref{6}. We deduce that there is only one summand.
       
    \item $b = a - 1$.
    Let $\mu \coloneqq \lambda - (\epsilon_1 + 2\epsilon_2)$. We have
        \begin{equation*}
        \begin{aligned}            
        P_\mu = M_{a-1,a|a+1,a+1} + M_{a,a-1|a+1,a+1}, \\
        \mathrm{Pr}_\lambda (P_\mu \otimes \extp^3 V) = 
          M_{a-1,a|a,a-1} + M_{a,a+1|a,a+1} + M_{a,a+1|a+1,a} + M_{a-1,a+1|a+1,a-1} \\
        + M_{a-1,a+1|a-1,a+1} + M_{a-1,a|a-1,a} + M_{a,a-1|a,a-1} + M_{a+1,a|a,a+1} \\
        + M_{a+1,a|a+1,a} + M_{a+1,a-1|a-1,a+1} + M_{a+1,a-1|a+1,a-1} + 2M_{a,a|a,a} \\
        + M_{a,a-1|a-1,a}.
        \end{aligned}
        \end{equation*}
	The presence of $M_{a-1,a |a, a-1}$, which has lowest weight in the projection, indicates that $P_\lambda$ is a direct summand. We proceed as before with Proposition \ref{filprop} to see which modules appear in the Verma flag. The module $M_{a-1,a+1|a+1,a-1}$ and at least one copy of $M_{a,a|a,a}$ appear by condition \ref{3}. The module $M_{a,a+1|a+1,a}$ appears by condition \ref{5}. The modules $M_{a-1,a|a-1,a}$ and $M_{a,a-1|a,a-1}$ appear by condition \ref{1}. The module $M_{a,a-1|a-1,a}$ appears by condition \ref{2}. The modules $M_{a-1,a+1|a-1,a+1}$, $M_{a+1,a-1|a-1,a+1}$, and $M_{a+1,a-1|a+1,a-1}$ appear by condition \ref{4}. The modules $M_{a,a+1|a,a+1}$, $M_{a+1,a|a,a+1}$, and $M_{a+1,a|a+1,a}$ appear by condition \ref{6}. This forces the second copy of $M_{a,a|a,a}$ to appear by Corollary \ref{corlen}. We deduce that there is only one summand.
                    
    \item $b = a$. This is a repeat of an earlier case.
\end{enumerate}

\section{Jordan-H{\"o}lder Formulae for $\gl(2|2)$}\label{sec7}
By BGG reciprocity, we can convert the standard filtration multiplicities for projective modules into Jordan-H{\"o}lder multiplicities of irreducible modules for Verma modules. These results are presented in the two theorems below.
\begin{thm}\label{gl221c}
Let $a,b \in \mathbb{Z}$ with $a \geq b$. We have the following irreducible decomposition formulae for the Verma modules $M_{a, b|b, a}$ and $M_{a, b|a, b}$ in $\Bl_0$:
\begin{enumerate}[label=(\arabic*), ref=(\arabic*]
\item Case: $M_{a,b|b,a}$
\begin{enumerate}[label=\theenumi.\arabic*), ref = \arabic*]
    \item If $b < a - 2$, then
        \begin{equation*}
        \begin{aligned}            
        M_{a,b|b,a}  = 
           L_{a,b|b,a} + L_{a,b|a,b} + L_{b,a|b,a} + L_{b,a|a,b} \\
         + L_{a-1,b|b,a-1} + L_{a-1,b|a-1,b} + L_{b,a-1|b,a-1} + L_{b,a-1|a-1,b} \\
         + L_{a,b-1|b-1,a} + L_{a,b-1|a,b-1} + L_{b-1,a|b-1,a} + L_{b-1,a|a,b-1} \\
         + L_{a-1,b-1|b-1,a-1} + L_{a-1,b-1|a-1,b-1} + L_{b-1,a-1|b-1,a-1} + L_{b-1,a-1|a-1,b-1}.
        \end{aligned}
        \end{equation*}
    \item If $b = a - 2$, then
        \begin{equation*}
        \begin{aligned}            
        M_{a,a-2|a-2,a}  = 
           L_{a,a-2|a-2,a} + L_{a,a-2|a,a-2} + L_{a-2,a|a-2,a} + L_{a-2,a|a,a-2} \\
         + L_{a,a-3|a-3,a} + L_{a,a-3|a,a-3} + L_{a-3,a|a-3,a} + L_{a-3,a|a,a-3} \\
         + L_{a-1,a-3|a-3,a-1} + L_{a-1,a-3|a-1,a-3} + L_{a-3,a-1|a-3,a-1} + L_{a-3,a-1|a-1,a-3} \\
         + L_{a-1,a-2|a-2,a-1} + L_{a-1,a-2|a-1,a-2} + L_{a-2,a-1|a-2,a-1} + L_{a-2,a-1|a-1,a-2} \\
         + L_{a-2,a-3|a-3,a-2}.
        \end{aligned}
        \end{equation*}    
    \item If $b = a - 1$, then
        \begin{equation*}
        \begin{aligned}            
        M_{a,a-1|a-1,a}= 
           L_{a,a-1|a-1,a} + L_{a-1,a|a-1,a} + L_{a,a-1|a,a-1} + L_{a-1,a|a,a-1} \\
         + 2L_{a-1,a-2|a-2,a-1} + L_{a-2,a-1|a-2,a-1} + L_{a-1,a-2|a-1,a-2} + L_{a-2,a-1|a-1,a-2} \\
         + L_{a-1,a-1|a-1,a-1} + L_{a-2,a-3|a-3,a-2}.
        \end{aligned}
        \end{equation*}        
    \item If $b = a$, then
        \begin{equation*}
        \begin{aligned}            
        M_{a,a|a,a}= 
           L_{a,a-1|a-1,a} + L_{a,a|a,a} + L_{a,a-1|a,a-1} + L_{a-1,a|a-1,a} \\
         + 2L_{a-1,a|a,a-1} + L_{a-1,a-2|a-2,a-1} + L_{a-1,a-1|a-1,a-1}
        \end{aligned}
        \end{equation*}   
\end{enumerate}

\item Case: $M_{a,b|a,b}$
\begin{enumerate}[label=\theenumi.\arabic*), ref = \arabic*]
    \item If $b < a - 2$, then
        \begin{equation*}
        \begin{aligned}            
        M_{a,b|a,b}  = 
           L_{a,b|a,b} + L_{b,a|a,b} + L_{a-1,b|a-1,b} + L_{b,a-1|a-1,b} \\
         + L_{a,b-1|a,b-1} + L_{b-1,a|a,b-1} + L_{a-1,b-1|a-1,b-1} + L_{b-1,a-1|a-1,b-1}.
        \end{aligned}
        \end{equation*}
    \item If $b = a - 2$, then
        \begin{equation*}
        \begin{aligned}            
        M_{a,a-2|a,a-2}  = 
           L_{a,a-2|a,a-2} + L_{a-2,a|a,a-2} + L_{a-1,a-2|a-1,a-2} + L_{a-2,a-1|a-1,a-2} \\
         + L_{a,a-3|a,a-3} + L_{a-3,a|a,a-3} + L_{a-1,a-3|a-1,a-3} + L_{a-3,a-1|a-1,a-3}.
        \end{aligned}
        \end{equation*}    
    \item If $b = a - 1$, then
        \begin{equation*}
        \begin{aligned}            
        M_{a,a-1|a,a-1}= 
           L_{a,a-1|a,a-1} + L_{a-1,a|a,a-1} + L_{a-1,a-2|a-2,a-1} + L_{a-1,a-1|a-1,a-1} \\
         + L_{a-1,a-2|a-1,a-2} + L_{a-2,a-1|a-1,a-2}.
        \end{aligned}
        \end{equation*}        
\end{enumerate}
\end{enumerate}
\end{thm}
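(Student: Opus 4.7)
The plan is to derive Theorem \ref{gl221c} as a direct corollary of Theorems \ref{gl221} and \ref{gl222} via BGG reciprocity \eqref{BGGrecip}. Recall that $(P_\lambda : M_\mu) = [M_\mu : L_\lambda]$ for all $\lambda, \mu \in \h^*$. Thus, to compute the composition multiplicity $[M_\mu : L_\lambda]$ for a fixed Verma module $M_\mu$ in $\Bl_0$, we must scan through every projective $P_\lambda$ indexed by weights $\lambda$ in the linkage class of $\mu$ and count the number of times $M_\mu$ appears in the Verma flag of $P_\lambda$. Every such occurrence contributes one copy of $L_\lambda$ to the composition series of $M_\mu$.

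The first step is to fix $\mu \cong (a,b\ |\ b,a)$ (and then $\mu \cong (a,b\ |\ a,b)$) with $a \geq b \in \Z$ in the standard notation of \cref{link}, and then to enumerate the four families of weights linked to $\mu$ in $\Bl_0$ as described in Example \ref{ex:gl22}: namely the weights of the form $(a',b'\ |\ b',a')$, $(a',b'\ |\ a',b')$, $(b',a'\ |\ b',a')$, and $(b',a'\ |\ a',b')$ for $a' \geq b' \in \Z$. For each such $\lambda$, I would look up the Verma flag of $P_\lambda$ given in Theorems \ref{gl221} and \ref{gl222}, check whether $M_\mu$ appears, and record the multiplicity with which it appears. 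Most projectives whose parameters differ from those of $\mu$ by a large amount contribute nothing; only those whose parameters lie within a small window (roughly $\{a-1,a\} \times \{b-1,b\}$ or $\{a,a+1\} \times \{b,b+1\}$ depending on the perspective) have a Verma flag that can contain $M_\mu$.

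The next step is to stratify the argument by the relative position of $a$ and $b$, matching the case splits of Theorems \ref{gl221}--\ref{gl222}: generic ($b < a - 2$), boundary $(b = a-2)$, near-diagonal $(b = a-1)$, and diagonal $(b = a)$ cases. This is where the main obstacle lies: the Verma flags of $P_\lambda$ take qualitatively different shapes in the boundary and diagonal cases (extra terms appear, and some modules like $M_{a+1,a|a,a+1}$ show up with multiplicity $2$), so one must be careful that the contributions collected across all projectives agree with the stated formulae including the multiplicity $2$ factors appearing in cases (1.3) and (2.3) of Theorem \ref{gl221c}. In particular, the multiplicity $2L_{a-1,a|a,a-1}$ in the diagonal case $M_{a,a|a,a}$ should come from the Verma flag of $P_{a-1,a|a,a-1}$ (case (4.2) of Theorem \ref{gl222}), which contains $2M_{a,a|a,a}$; dualizing this via BGG reciprocity yields the doubled composition factor.

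Finally, a sanity check is needed at each step: every $L_\lambda$ I record must satisfy $\lambda \geq \mu$ in the Bruhat order (since $M_\mu$ has highest weight $\mu - \rho$), and the trivial contribution $[M_\mu : L_\mu] = 1$ must always appear, coming from the occurrence $(P_\mu : M_\mu) = 1$ noted in fact \ref{p4}. Once all entries are collected and the case analysis is complete, the eight subcases of Theorem \ref{gl221c} will emerge by inspection. No new representation-theoretic input beyond BGG reciprocity and the previous theorems is required; the work is entirely bookkeeping, and the edge cases $b = a-2,\ a-1,\ a$ are the only places where genuine care is needed.
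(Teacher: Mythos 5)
Your proposal is correct and is exactly the paper's approach: Section~\ref{sec7} simply invokes BGG reciprocity~\eqref{BGGrecip} to convert the Verma-flag multiplicities of Theorems~\ref{gl221} and~\ref{gl222} into the stated composition multiplicities, and your bookkeeping scheme (scanning over all $P_\lambda$ in $\Bl_0$ whose parameters lie in the relevant window and collecting occurrences of $M_\mu$) is the right way to carry it out, including the explanation of the doubled factor $2L_{a-1,a|a,a-1}$ in case (1.4). One small slip: you refer to the multiplicity-$2$ factors as occurring in cases ``(1.3) and (2.3),'' but they in fact appear in (1.3) and (1.4) -- case (2.3) is multiplicity-free -- though this does not affect the validity of the argument.
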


\begin{thm}\label{gl221d}
Let $a,b \in \mathbb{Z}$ with $a \geq b$. We have the following irreducible decomposition formulae for the Verma modules $M_{b, a|b, a}$ and $M_{b, a|a, b}$ in $\Bl_0$:
\begin{enumerate}[label=(\arabic*), ref=(\arabic*]
\item Case: $M_{b,a|b,a}$
\begin{enumerate}[label=\theenumi.\arabic*), ref = \arabic*]
    \item If $b < a - 2$, then
        \begin{equation*}
        \begin{aligned}            
        M_{b,a|b,a}  = 
           L_{b,a|b,a} + L_{b,a|a,b} + L_{b,a-1|b,a-1} + L_{b,a-1|a-1,b} \\
         + L_{b-1,a|b-1,a} + L_{b-1,a|a,b-1} + L_{a-1,b-1|a-1,b-1} + L_{b-1,a-1|a-1,b-1}.
        \end{aligned}
        \end{equation*}
    \item If $b = a - 2$, then
        \begin{equation*}
        \begin{aligned}            
        M_{a-2,a|a-2,a}  = 
           L_{a-2,a|a-2,a} + L_{a-2,a|a,a-2} + L_{a-2,a-1|a-2,a-1} + L_{a-2,a-1|a-1,a-2} \\
         + L_{a-3,a|a-3,a} + L_{a-3,a|a,a-3} + L_{a-3,a-1|a-3,a-1} + L_{a-3,a-1|a-1,a-3}.
        \end{aligned}
        \end{equation*}    
    \item If $b = a - 1$, then
        \begin{equation*}
        \begin{aligned}            
        M_{a,a-1|a,a-1}= 
           L_{a-1,a|a-1,a} + L_{a-1,a|a,a-1} + L_{a-1,a-2|a-2,a-1} + L_{a-1,a-1|a-1,a-1} \\
         + L_{a-2,a-1|a-2,a-1} + L_{a-2,a-1|a-1,a-2}.
        \end{aligned}
        \end{equation*}        
\end{enumerate}

\item Case: $M_{b,a|a,b}$
\begin{enumerate}[label=\theenumi.\arabic*), ref = \arabic*]
    \item If $b < a - 2$, then
        \begin{equation*}
        \begin{aligned}            
        M_{b,a|a,b}  = 
           L_{b,a|a,b} + L_{b,a-1|a-1,b} + L_{b-1,a|a,b-1} + L_{b-1,a-1|a-1,b-1}.
        \end{aligned}
        \end{equation*}
    \item If $b = a - 2$, then
        \begin{equation*}
        \begin{aligned}            
        M_{a-2,a|a,a-2}  = 
           L_{a-2,a|a,a-2} + L_{a-3,a|a,a-3} + L_{a-3,a-1|a-1,a-3} + L_{a-2,a-1|a-1,a-2}.
        \end{aligned}
        \end{equation*}    
    \item If $b = a - 1$, then
        \begin{equation*}
        \begin{aligned}            
        M_{a-1,a|a,a-1}= 
           L_{a-1,a|a,a-1} + L_{a-1,a-1|a-1,a-1} + L_{a-2,a-1|a-1,a-2}.
        \end{aligned}
        \end{equation*}        
\end{enumerate}
\end{enumerate}
\end{thm}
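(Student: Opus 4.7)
The plan is to invoke BGG reciprocity \eqref{BGGrecip}, namely $(P_\lambda : M_\mu) = [M_\mu : L_\lambda]$, and read off the composition multiplicities of $M_{b,a|b,a}$ and $M_{b,a|a,b}$ from the explicit Verma flag formulae for all projective covers in $\Bl_0$ already recorded in Theorems \ref{gl221} and \ref{gl222}. Since $\Bl_0$ is combinatorially described by the four weight shapes $(x,y|y,x)$, $(x,y|x,y)$, $(y,x|y,x)$, $(y,x|x,y)$ with $x \geq y \in \Z$, only the four families $P_{a',b'|b',a'}$, $P_{a',b'|a',b'}$, $P_{b',a'|b',a'}$, $P_{b',a'|a',b'}$ (with $a' \geq b'$) can possibly contribute to each $[M_\mu : L_\lambda]$.

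First I would fix $\mu$ as $(b,a|b,a)$ or $(b,a|a,b)$ with the appropriate relation between $a$ and $b$, and then, using property \ref{p4} which forces $\lambda \leq \mu$ in the Bruhat order for any contributing $\lambda$, identify the finite list of candidates $\lambda$ such that $M_\mu$ can appear in the Verma flag of $P_\lambda$. Inspection of Theorems \ref{gl221} and \ref{gl222} reveals that the relevant $\lambda$'s lie in a narrow, explicit neighborhood of $\mu$, with coordinate entries differing from those of $\mu$ by at most one. For each such $\lambda$ I would look up the clause of Theorem \ref{gl221} or \ref{gl222} that applies to $P_\lambda$ and record the coefficient of $M_\mu$ there, which by reciprocity equals $[M_\mu : L_\lambda]$. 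Summing over all contributing $\lambda$ yields the claimed composition series for $M_\mu$.

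In the generic case $b < a-2$, each contributing projective contains $M_\mu$ with multiplicity exactly one, producing the eight summands of the theorem for $M_{b,a|b,a}$ and the four summands for $M_{b,a|a,b}$. The edge cases $b = a-2$ and $b = a-1$ demand more care: label coincidences collapse some of the generic contributions, while the special subcases of Theorems \ref{gl221} and \ref{gl222} inject additional terms, notably from the exceptional projectives $P_{a-1,a|a-1,a}$ and $P_{a-1,a|a,a-1}$ whose Verma flags contain extra modules (and, in the latter, the doubled $M_{a,a|a,a}$ responsible for the $a=b$ coincidences).

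The main obstacle is therefore purely combinatorial bookkeeping: one must ensure that every clause of Theorems \ref{gl221} and \ref{gl222} which produces an $M_\mu$ is identified, and that no candidate $\lambda$ is double-counted or overlooked in the edge cases. A convenient systematic implementation is to build, for each relevant $\mu$, a table of pairs $(\lambda, (P_\lambda : M_\mu))$ by extracting the $M_\mu$-coefficient from every applicable projective formula, and then to transpose the table to read off $[M_\mu : L_\lambda]$. The entirely analogous enumeration already underlying Theorem \ref{gl221c} serves as a template and as a consistency check, and the same procedure applies here essentially verbatim.
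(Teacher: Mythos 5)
Your proposal is correct and is precisely the paper's approach: the paper obtains Theorem~\ref{gl221d} by applying BGG reciprocity \eqref{BGGrecip} to the Verma flag formulae of Theorems~\ref{gl221} and~\ref{gl222} and transposing the resulting table, with no further elaboration given. Your systematic bookkeeping scheme---extracting the $M_\mu$-coefficient from each applicable clause of those theorems, with extra care in the boundary cases $b = a-1$ and $b = a-2$ where the exceptional projective formulae contribute or collapse terms---matches the intended argument at the appropriate level of detail.
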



\end{document}